\newcommand{\fonction}[3]{#1 : #2 \longrightarrow #3}
\newcommand{\Fonction}[5]{\begin{align*} #1 : #2 &\longrightarrow #3 \\ #4 &\longmapsto #5 \end{align*}}
\newcommand{\bb}[1]{\mathbb{#1}}
\newcommand{\mcal}[1]{\mathcal{#1}}
\newcommand{\nor}[1]{\left \lvert #1 \right \rvert}
\newcommand{\eq}[1][r]
   {\ar@<-3pt>@{-}[#1]
    \ar@<-1pt>@{}[#1]|<{}="gauche"
    \ar@<+0pt>@{}[#1]|-{}="milieu"
    \ar@<+1pt>@{}[#1]|>{}="droite"
    \ar@/^2pt/@{-}"gauche";"milieu"
    \ar@/_2pt/@{-}"milieu";"droite"}
\newcommand{\nocontentsline}[3]{}
\newcommand{\tocless}[2]{\bgroup\let\addcontentsline=\nocontentsline#1{#2}\egroup}
\theoremstyle{definition}
\newtheorem{defin}{Definition}[section]
\newtheorem{ex}[defin]{Example}
\newtheorem{exs}[defin]{Examples}
\newtheorem{rem}[defin]{Remark}
\newtheorem{nota}[defin]{Notation}
\newtheorem{notas}[defin]{Notations}
\theoremstyle{plain}
\newtheorem{theo}[defin]{Theorem}
\newtheorem{coro}[defin]{Corollary}
\newtheorem{prop}[defin]{Proposition}
\newtheorem{lemme}[defin]{Lemma}
\newtheorem{conj}[defin]{Conjecture}
\newtheorem{property}[defin]{Property}
\DeclareMathOperator{\spn}{Span}
\DeclareMathOperator{\id}{id}
\DeclareMathOperator{\rk}{rk}
\DeclareMathOperator{\op}{op}
\DeclareMathOperator{\Bimod}{Bimod}
\DeclareMathOperator{\norm}{norm}
\DeclareMathOperator{\Card}{Card}
\author{Silvère Nédélec}
\title{Non-Koszulness in a family of properads}
\begin{document}

\ytableausetup{centertableaux, boxsize = 1em}

\begin{abstract} Proving Koszulness of a properad can be very hard, but sometimes one can look at its Koszul complex to look for obstructions for Koszulness. In this paper, we present a method and tools to prove non-Koszulness of many properads in a family of quadratic properads. We illustrate this method on a family of associative and coassociative properads with one quadratic compatibility relation.\end{abstract}

\maketitle

\section{Introduction}

In 2007, B. Vallette extended, in \cite{Va}, Koszul duality from operads (see for example \cite{LoVa}) to properads. This work was already done for other structures encoding bialgebras such as $\frac{1}{2}$-PROPs by M. Markl and A.A. Voronov in \cite{MaVo} and for dioperads by W.L. Gan in \cite{Ga}. In particular, B. Vallette proved that Koszulness of a quadratic properad $\mcal{P}$ is equivalent to acyclicity of its Koszul complex $\mcal{P} \boxtimes \mcal{P}^{\text{\textexclamdown}}$. Koszulness of a properad leads to many results and constructions on this properad, for example the existence of a minimal model of the properad, encoding structures up to homotopy.

In this paper, the goal is to prove non-Koszulness of properads using the Koszul complex. Our main insight is that one does not necessarily need to compute the differential of this complex in order to prove non Koszulness : for given weight and biarity, it suffices that the Euler characteristic of the dg sub-$\bb{S}$-bimodule is not zero to prove non-Koszulness of the properad. We will illustrate this method to a particular case of properads : the properads given by an associative product, a coassociative coproduct and a quadratic compatibility relation depending on four parameters.

Theorem \ref{TheoGen} shows that many properads of this form are not Koszul using only the Euler characteristic condition in weight $4$ and biarity $(2, 4)$, but in the general case it is not enough. In order to get a more general result, one can look at the differential of the Koszul complex, or look at different weights and biarities, but so far we do not know if we can prove Koszulness or non-Koszulness of all properads in this family.

The family we study is the family of properads depending on four parameters $a = (a_1, a_2, a_3, a_4) \in \bb{C}$, the space of generators \[E = \produit{}{}{} \oplus \coproduit{}{}{}\] given by a product and a coproduct without symmetry and the space of relations 

\begin{align*}
R =& \peignegauche{}{}{}{} - \peignedroite{}{}{}{} \oplus \copeignegauche{}{}{}{} - \copeignedroite{}{}{}{} \\
&\oplus \jesus{}{}{}{} - a_1 \dromadroite{}{}{}{} - a_2 \dromagauche{}{}{}{} - a_3 \poissongauche{}{}{}{} - a_4 \poissondroite{}{}{}{}.
\end{align*} We denote by $\between_a$ the term \[\between_a := \jesus{}{}{}{} - a_1 \dromadroite{}{}{}{} - a_2 \dromagauche{}{}{}{} - a_3 \poissongauche{}{}{}{} - a_4 \poissondroite{}{}{}{}\] and we set $\mcal{P}_a := \mcal{F}(E)/(R)$.
	
We first define a rewriting system induced by $\mcal{P}_a$ and find which properads induce a confluent system. This gives a set of relations on $(a_1, a_2, a_3, a_4)$ that characterizes the set of properads that induce a confluent system. We also prove that this condition is equivalent to the existense of an isomorphism between $\mcal{P}_a$ and the composition product $\mcal{A} \boxtimes \mcal{C}$ of the properads encoding associative algebras $\mcal{A}$ and coassociative coalgebras $\mcal{C}$. By \cite[Proposition 8.4]{Va}, this condition implies that $\mcal{P}_a$ is Koszul, thus if one wants to characterize all Koszul properads of this family, one has to consider all properads that do not induce a confluent system.
	
Considering a properad that does not induce a confluent system, we look at its Koszul complex in biarity $(2, 4)$ and weight $4$. This way the complex is given by finite dimensional representations of $\bb{S}_2 \times \bb{S}_4^{\op}$ with equivariant differential maps. Then we look at the isotypic decomposition of this complex and consider their dimensions, that are given by the multiplicities of the Koszul complex. Now if we observe that the Euler characteristics of the isotypic components is not zero, we can say that the properad considered is not Koszul.
	
In order to get multiplicities of the $(\bb{S}_2 \times \bb{S}_4^{\op})$-modules in the Koszul complex, we use three important tools. The first one is a \texttt{Sagemath} script that can be found on \cite{NeWeb} that generates the basis of the free properad on given generators in low weights and a generating family of the ideal generated by given relations in low weights. The second one is a method from \cite{BrMaPe} illustrated by M. Bremner and V. Dotsenko in \cite{BrDo} on a family of operads that we extend to the case of properads and $\bb{S}$-bimodules. The idea is to generate from every couple of partitions $(\lambda, \mu) \vdash (2, 4)$ a matrix whose rank is the multiplicity of $V_{\lambda} \boxtimes V_{\mu}$ in the space of relations. The last tool is the Pieri formula, a special case of the Littlewood--Richardson rule that allows us to compute multiplicities of composition products of $\bb{S}$-bimodules, knowing their multiplicities.

\addtocontents{toc}{\protect\setcounter{tocdepth}{0}}
\section*{Conventions}

Any vector space will be over $\bb{C}$. Every result in this paper also works over any algebraically closed field of characteristic zero.

\subsection*{Permutations, tuples and partitions}

Let $n \in \bb{N}$. Let us denote by $\bb{S}_n$ the permutation group of $\{1, \dots, n\}$. We denote any permutation $\sigma \in \bb{S}_n$ by $[\sigma(1), \dots, \sigma(n)]$, or by cycles with parenthesis, for example $[2, 3, 1, 4] = (1,2,3)$, or $(123)$ if there is no ambiguity.

In this paper, an $n$-tuple will be a family of $n$ natural numbers $\overline{a} = (a_1, \dots, a_n)$. For an $n$-tuple $\overline{a}$, we denote by $\nor{a} := a_1 + \dots + a_n$. 

A partition of $n$ is a non-increasing $k$-tuple $\overline{a} = (a_1, \dots, a_k)$ such that $\nor{a} = n$, this notion is defined up to adding zeros at the end of $\overline{a}$ (the tuples $(3, 2)$ and $(3, 2, 0)$ are the same partitions of $5$). We write $\overline{a} \vdash n$. We will use Young diagrams to represent partitions of natural numbers, for example we have \[(4, 2, 2, 1) = \ydiagram{4, 2, 2, 1}.\] For a partition $\lambda \vdash n$, we denote by $\lambda'$ its conjugate partition of $n$, which is given by the symmetry of the Young diagram of $\lambda$, that is $\lambda_i' = \Card\{j | \lambda_j \geq i \}$.

For $\overline{i}$ an $n$-tuple, we denote by $\bb{S}_{\overline{i}}$ the product of symmetric groups $\bb{S}_{i_1} \times \dots \times \bb{S}_{i_n}$. For $\overline{i}, \overline{j}$ two tuples such that $\nor{i} = \nor{j} = n$, we denote by $\bb{S}_{\overline{i}, \overline{j}}^c$ the set of $\overline{i}, \overline{j}$-connected permutations of $\bb{S}_n$ (see \cite{Va}).

\subsection*{Representations of symmetric groups}

For every group $G$, we denote by $\bb{C}[G]$ the \textit{regular representation} of $G$. As a vector space, $\bb{C}[G]$ has the elements of $G$ as a basis and the left (resp. right) action of $G$ on the basis is given by multiplication on the left (resp. right).
    			
For $H$ a subgroup of $G$ and a left (resp. right) representation $V$ of $G$, we have the structure of a left (resp. right) $H$-module on $V$. We denote this representation by ${}^G_H \!\downarrow V$ (resp. $V \downarrow^G_H$) and call it the \textit{restricted representation} of $V$ on $H$.
    			
Moreover, for a left (resp. right) representation $W$ of $H$, we have the structure of a left (resp. right) representation of $G$ on $\left(\bb{C}[G]\downarrow^G_H\right) \otimes_{H} W$ (resp. $W \otimes_{H} \left({}^G_H\!\downarrow\bb{C}[G]\right)$). We call this representation the \textit{induced representation} of $W$ on $G$ and denote it by $ {}^G_H\!\uparrow W$ (resp. $W\uparrow^G_H$).
    
For $n, m \in \bb{N}$ and two left (resp. right) representations $V$ and $W$ respectively of $\bb{S}_n$ and $\bb{S}_m$, we denote by $V \sqcup_l W$ (resp. $V \sqcup_r W$) the left (resp. right) representation of $\bb{S}_{n + m}$ given by $V \sqcup_l W := {}^{\bb{S}_{n + m}}_{\bb{S}_n \times \bb{S}_m}\!\uparrow (V \otimes W)$ (resp. $V \sqcup_r W := (V \otimes W)\uparrow^{\bb{S}_{n + m}}_{\bb{S}_n \times \bb{S}_m}$).
    			
For a left $\bb{S}_m$-module $V$ and a right $\bb{S}_n$-module $W$, we denote by $V \boxtimes W$ the representation of $\bb{S}_m \times \bb{S}_n^{\op}$ given by the vector space $V \otimes W$ with the left action by $\bb{S}_m$ on $V$ and the right action by $\bb{S}_n$ on $W$.

Let $n \in \bb{N}$. We will denote by $V_{\lambda}$ the irreducible representation of $\bb{S}_n$ corresponding to the partition $\lambda \vdash n$. For a representation $V$ of $\bb{S}_n$, we denote by $m_{\lambda}(V)$ the multiplicity of $V_{\lambda}$ in $V$, and by $[V]_{\lambda} := V_{\lambda}^{\oplus m_{\lambda}(V)}$ the isotypic component of $V$. For more details on representations of symmetric groups, one can see for example \cite{FuHa} or \cite{Sa}.

\addtocontents{toc}{\protect\setcounter{tocdepth}{1}}
\setcounter{tocdepth}{1}
\tableofcontents

\addtocontents{toc}{\protect\setcounter{tocdepth}{0}}
\section*{Aknowledgement}

The author would like to thank gratefully Friedrich Wagemann, Salim Rivière and Johan Leray for all the help, the discussions and the kindness during the PhD thesis and even after.

\addtocontents{toc}{\protect\setcounter{tocdepth}{1}}
\section{Properads}

The notion of properad appeared in order to generalize that of operad (see \cite{LoVa}) to algebraic structures with operations with more than one output. For precise definitions and properties of properads, we refer the reader to \cite{Va}.

\begin{notas} Let us fix several notations.
\begin{enumerate}
\item For $P, Q$ two $\bb{S}$-bimodules, we denote by $Q \boxtimes P$ the connected composition product of $Q$ and $P$ : \[(Q \boxtimes P)(m,n) := \left(\bigoplus_{N, \overline{l}, \overline{k}, \overline{j}, \overline{i}} \bb{C}[\bb{S}_m] \otimes_{S_{\overline{l}}} Q(\overline{l}, \overline{k}) \otimes_{S_{\overline{k}}} \bb{C}[\bb{S}^c_{\overline{k}, \overline{j}}] \otimes_{S_{\overline{j}}} P(\overline{j}, \overline{i}) \otimes_{S_{\overline{i}}} \bb{C}[\bb{S}_n]\right) / \sim,\] where the direct sum runs over the integers $N \in \bb{N}$, the $b$-tuples $\overline{l}$, $\overline{k}$, the $a$-tuples $\overline{j}$, $\overline{i}$ such that $\nor{\overline{l}} = m$, $\nor{\overline{k}} = \nor{\overline{j}} = N$ and $\nor{\overline{i}} = n$, and the equivalence relation~$\sim$ is the following. For $\sigma_1 \in \bb{S}_n$, $\sigma_2 \in S^c_{\overline{k}, \overline{j}}$, $\sigma_3 \in \bb{S}_m$, $(q_1, \dots, q_b) \in Q(\overline{l}, \overline{k})$, $(p_1, \dots, p_a) \in P(\overline{j}, \overline{i})$, $\tau \in \bb{S}_a$ and $\rho \in \bb{S}_b$, we have 

	\begin{align*}
	&\sigma_3 \otimes (q_1, \dots, q_b) \otimes \sigma_2 \otimes (p_1, \dots, p_a) \otimes \sigma_1  \\
	\sim & \sigma_3 \rho_{\overline{l}}^{-1} \otimes (q_{\rho^{-1}(1)}, \dots, q_{\rho^{-1}(b)}) \otimes \rho_{\overline{k}} \sigma_2 \tau_{\overline{j}} \otimes (p_{\tau(1)}, \dots, p_{\tau(a)}) \otimes \tau_{\overline{i}}^{-1}\sigma_1
	\end{align*}
\item For $P, Q$ two weight-graded $\bb{S}$-bimodules (see \cite[Section 2.1]{Va}), that is \[P = \bigoplus_{\rho \in \bb{N}} P^{(\rho)}\text{ and }Q = \bigoplus_{\rho \in \bb{N}} Q^{(\rho)},\] $Q \boxtimes P$ is a weight graded $\bb{S}$-bimodule by taking the sum of weights of composed elements.
\item For $E$ an $\bb{S}$-bimodule, we denote by $\mcal{F}(E)$ the free properad over $E$, which is a weight graded $\bb{S}$-bimodule taking for the weight the number of generators : \[\mcal{F}(E) = \bigoplus_{n \in \bb{N}} \mcal{F}^{(n)}(E).\]
\item For $P, Q$ two weight graded $\mathbb{S}$-bimodules, we denote by \[\underbrace{Q}_{\rho_2} \boxtimes \underbrace{P}_{\rho_1}\] the subspace of $Q \boxtimes P$ generated by the elements of the form $\sigma_3 \otimes (q_1, \dots, q_b) \otimes \sigma_2 \otimes (p_1, \dots, p_b) \otimes \sigma_3$ such that the sum of the weights of $p_1$, \dots, $p_a$ is equal to $\rho_1$ and the sum of the weights of $q_1$, \dots, $q_b$ is equal to $\rho_2$.
\end{enumerate}
\end{notas}

\subsection{Infinitesimal composition product}

\begin{defin} Let $P_1$, $P_2$, $Q_1$ and $Q_2$ be $\bb{S}$-bimodules. We denote by $(Q_1 ; Q_2) \boxtimes (P_1 ; P_2)$ the subspace of $(Q_1 \oplus Q_2) \boxtimes (P_1 \oplus P_2)$ generated by all elements of the form 
				
\begin{equation} \label{FormInfinitesimal} \sigma_3 \otimes (q_1, \dots, q_b) \otimes \sigma_2 \otimes (p_1, \dots, p_a) \otimes \sigma_1 \end{equation} such that exactly one element $p_i$ is in $P_2$ and the others are in $P_1$, and exactly one element $q_i$ is in $Q_2$ and the others are in $Q_1$.
				
More generally, if $P_1, \dots, P_n, Q_1, \dots, Q_m$ are $\bb{S}$-bimodules, let us denote by  \\$(Q_1 ; Q_2, \dots, Q_m) \boxtimes (P_1; P_2, \dots, P_n)$ the subspace of $(Q_1 \oplus Q_2 \oplus \dots \oplus Q_m) \boxtimes (P_1 \oplus P_2 \oplus \dots \oplus P_n)$ generated by all elements of the form (\ref{FormInfinitesimal}) such that for every $2 \leq k \leq n$, there exists a unique $i \in \{1, \dots, a\}$ such that $p_i$ is in $P_k$ and the others are in $P_1$, and for every $2 \leq l \leq n$, there exists a unique $j \in \{1, \dots, b\}$ such that $q_j$ is in $Q_l$, and the others are in $Q_1$
				
Finally, if $\overline{a} = (a_2, \dots, a_n)$ is an $(n-1)$-tuple and $\overline{b} = (b_2, \dots, b_m)$ is an $(m-1)$-tuple, we denote by $(Q_1 ; Q_2, \dots, Q_m) \boxtimes_{\overline{b}, \overline{a}} (P_1 ; P_2, \dots, P_n)$ the subspace of $(Q_1 \oplus Q_2 \oplus \dots \oplus Q_m) \boxtimes (P_1 \oplus P_2 \oplus \dots \oplus P_n)$ generated by all elements of the form (\ref{FormInfinitesimal}) such that for every $2 \leq k \leq n$, there are exactly $a_k$ elements $p_i$ in $P_k$ and the others are in $P_1$, and for every $2 \leq l \leq n$, there are exactly $b_l$ elements $q_i$ in $Q_l$ and the others are in $Q_1$.
\end{defin}
			
\begin{defin} Let $P$ and $Q$ be $\bb{S}$-bimodules. We define the \textit{infinitesimal composition product} of $P$ with $Q$  by \[Q \boxtimes_{(1)} P := (I ; Q) \boxtimes (I ; P).\]
				
More generally, if $P_2, \dots, P_n, Q_2, \dots, Q_m$ are $\bb{S}$-bimodules, we denote the \textit{infinitesimal composition product} of $(P_2, \dots, P_n)$ with $(Q_2, \dots, Q_m)$ by \[(Q_2, \dots, Q_m) \boxtimes_{(1)} (P_2, \dots, P_n) := (I ; Q_2, \dots, Q_m) \boxtimes (I ; P_2, \dots, P_n).\]
				
Finally, if $\overline{a} = (a_2, \dots, a_n)$ is an $(n-1)$-tuple and $\overline{b} = (b_2, \dots, b_m)$ is an $(m-1)$-tuple, we denote by \[(Q_2, \dots, Q_m) \boxtimes_{\overline{b}, \overline{a}} (P_2, \dots, P_n) := (I ; Q_2, \dots, Q_m) \boxtimes_{\overline{b}, \overline{a}} (I ; P_2, \dots, P_n).\]
				
\end{defin}
			
\begin{rem} These definitions are inspired by the analogous definition for $\bb{S}$-modules in \cite[Section 6.1]{LoVa}, this is why we use the same notations. \end{rem}
			
\begin{ex} For example, if $P$ and $Q$ are connected $\bb{S}$-bimodules, we can look at the infinitesimal composition of non trivial blocks of $P$ and $Q$ by looking at the space $\overline{P} \boxtimes_{(1)} \overline{Q}$. \end{ex}
			
\begin{nota} \label{NotaInfinitesimal} If each of the $\bb{S}$-bimodules $P_2, \dots, P_n$ and $Q_2, \dots, Q_m$ is concentrated in one biarity, the infinitesimal composition product $(Q_2, \dots, Q_m) \boxtimes_{\overline{b}, \overline{a}} (P_2, \dots, P_n)$ in a given biarity will be represented with vertices corresponding to the $P_i$'s over vertices corresponding to the $Q_i$'s. For example, if $P_2$ is concentrated in biarity $(2, 3)$, $P_3$ is concentrated in biarity $(3, 2)$ and $Q$ is concentrated in biarity $(4, 3)$, we will represent the infinitesimal composition product $((Q) \boxtimes_{(2), (1, 1)} (P_2, P_3))(8, 6)$ by Figure  \ref{RepInfinitesimal}. The rectangles represent the formal operations, the horizontal line in the middle corresponds to any connected permutation, not to be confused with the notation from \cite[Definition 19]{Ma} which does not make sense here, and the vertical line corresponds to $I$.
			
\begin{figure}[h!]
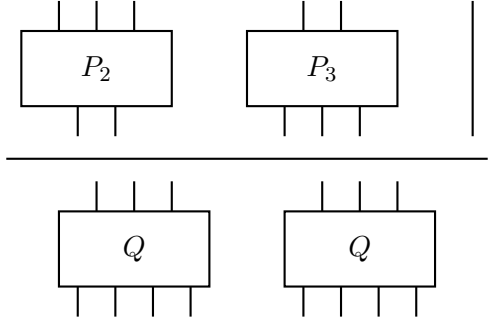
 
\centering
\RepInfinitesimal
\caption{Representation of an infinitesimal composition product} 
\label{RepInfinitesimal} 
\end{figure}
			
\end{nota}
			
With these definitions, we state a very useful theorem which describes how to compose two formal operations defined by representations of symmetric groups. Because irreducible representations of product of symmetric groups are tensor products of irreducible representations of the symmetric groups, it is enough to consider tensor products of representations.
		
\begin{theo}\label{TheoComposBlocks} Let $k, l, m, n \in \bb{N}^*$ and 
		
\begin{enumerate}
\item $V$ be a left $\bb{S}_k$-module,
\item $W$ be a right $\bb{S}_l$-module,
\item $X$ be a left $\bb{S}_m$-module,
\item $Y$ be a right $\bb{S}_n$-module.
\end{enumerate}
		
Take $\mcal{Q} := V \boxtimes W$ and $\mcal{P} := X \boxtimes Y$. We have the isomorphism of $\bb{S}$-bimodules \[(\mcal{Q} \boxtimes_{(1)} \mcal{P})(k + m - 1, l + n - 1) \simeq (V \sqcup_l \ ^{\bb{S}_m}_{\bb{S}_{m - 1}}\downarrow X) \boxtimes (W\downarrow^{\bb{S}_l}_{\bb{S}_{l - 1}} \sqcup_r Y).\]
\end{theo}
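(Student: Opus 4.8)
The plan is to compute both sides explicitly in biarity $(k+m-1,l+n-1)$ and to exhibit an $\bb{S}_{k+m-1}\times\bb{S}_{l+n-1}^{\op}$-equivariant bijection between them. \textbf{Step 1: a normal form for the graphs.} By definition $\mcal{Q}\boxtimes_{(1)}\mcal{P}=(I;\mcal{Q})\boxtimes(I;\mcal{P})$ is spanned by two-level connected graphs having exactly one top vertex carrying an element of $\mcal{Q}$ (with $k$ outputs and $l$ inputs), exactly one bottom vertex carrying an element of $\mcal{P}$ (with $m$ outputs and $n$ inputs), and all other vertices equal to copies of the unit $I$. Counting global outputs and global inputs forces exactly $m-1$ units on the top level and $l-1$ units on the bottom level. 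Since the $\mcal{Q}$-vertex has $l$ inputs while there are only $l-1$ bottom units, at least one input of the $\mcal{Q}$-vertex must be joined to an output of the $\mcal{P}$-vertex, so the $\mcal{P}$- and $\mcal{Q}$-vertices already lie in the same component; connectedness of the graph then \emph{forbids} any edge between a top unit and a bottom unit, as such an edge would create a separate two-vertex component. Hence every top unit is attached to an output of $\mcal{P}$ and every bottom unit to an input of $\mcal{Q}$, which leaves exactly one output of $\mcal{P}$ glued to exactly one input of $\mcal{Q}$. So $\mcal{Q}\boxtimes_{(1)}\mcal{P}$ is spanned by the rigid configurations ``glue one leg of $\mcal{P}$ to one leg of $\mcal{Q}$'', with the remaining $m-1$ outputs of $\mcal{P}$ together with the $k$ outputs of $\mcal{Q}$ routed to the $k+m-1$ global outputs, and the $n$ inputs of $\mcal{P}$ together with the remaining $l-1$ inputs of $\mcal{Q}$ routed to the $l+n-1$ global inputs.

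\textbf{Step 2: reading off the two module structures.} With this normal form an element of $\mcal{Q}\boxtimes_{(1)}\mcal{P}$ amounts to the datum of $v\otimes w\in V\otimes W$, of $x\otimes y\in X\otimes Y$, of a choice of output of $\mcal{P}$ and of input of $\mcal{Q}$ to be glued, and of relabelling permutations for the global legs, all taken modulo the equivalence relation $\sim$. On the output side, once the $k+m-1$ global outputs are partitioned into a ``$\mcal{Q}$-part'' of size $k$ and a ``$\mcal{P}$-part'' of size $m-1$, the first is governed by $V$ and the second by the $m-1$ unglued legs of $\mcal{P}$; summing over which of the $m$ legs of $\mcal{P}$ is glued turns $X$, viewed as an $\bb{S}_{m-1}$-module on those $m-1$ legs, into $ {}^{\bb{S}_m}_{\bb{S}_{m-1}}\!\downarrow X$ (the $m$ choices form the $\bb{S}_m$-set $\bb{S}_m/\bb{S}_{m-1}$, and the coinvariant $\otimes_{\bb{S}_m}$ present in $\boxtimes$ collapses it precisely onto the restriction), while summing over the partition of the global output labels is the induction $ {}^{\bb{S}_{k+m-1}}_{\bb{S}_k\times\bb{S}_{m-1}}\!\uparrow(-)$. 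Altogether the left $\bb{S}_{k+m-1}$-action on the global outputs is $V\sqcup_l\bigl( {}^{\bb{S}_m}_{\bb{S}_{m-1}}\!\downarrow X\bigr)$. The mirror computation on the $l+n-1$ global inputs, with the roles of ``top'' and ``bottom'' exchanged, gives the right $\bb{S}_{l+n-1}$-action $W\downarrow^{\bb{S}_l}_{\bb{S}_{l-1}}\sqcup_r Y$. Since the outer permutations $\sigma_3$ and $\sigma_1$ of $\boxtimes$ act by left, respectively right, multiplication on the global legs, these two actions are compatible, so one obtains exactly the $\bb{S}_{k+m-1}\times\bb{S}_{l+n-1}^{\op}$-module on the right-hand side.

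\textbf{Step 3: assembling and checking.} I would make Step 2 precise by fixing a normal-form representative — say the glued leg of $\mcal{P}$ is its last output, the glued leg of $\mcal{Q}$ is its last input, and the unit vertices are ordered left to right — and by defining a map from $V\sqcup_l\bigl( {}^{\bb{S}_m}_{\bb{S}_{m-1}}\!\downarrow X\bigr)\boxtimes\bigl(W\downarrow^{\bb{S}_l}_{\bb{S}_{l-1}}\sqcup_r Y\bigr)$ to $\mcal{Q}\boxtimes_{(1)}\mcal{P}$ on such representatives, reading $\sigma_1,\sigma_2,\sigma_3$ off the shuffle permutations of the two induction products. Then one checks: (a) well-definedness, i.e. independence of the chosen coset representatives of the induced modules and compatibility with $\sim$; (b) $\bb{S}_{k+m-1}\times\bb{S}_{l+n-1}^{\op}$-equivariance, which is immediate since both sides let the outer group act by multiplication on $\sigma_3$ and $\sigma_1$; (c) bijectivity, with an inverse supplied by the normal form of Step 1. (A shortcut to (c) would be to compare characters via the Frobenius formula once well-definedness is known, but the explicit inverse is cheap and more useful downstream.)

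\textbf{Where the difficulty is.} The conceptual input — the rigidity in Step 1 — is short. The genuine work, and the easiest place to slip, is the bookkeeping in Steps 2--3: one must track how the several coinvariant quotients $\otimes_{\bb{S}_\bullet}$ built into $\boxtimes$, together with the relation $\sim$ (which permutes the unit vertices and absorbs permutations into the inputs of the operations), combine to produce an \emph{induction} on the outer labels and a \emph{restriction} on the glued leg, with no leftover factor of $|\bb{S}_{m-1}|$ or $|\bb{S}_{l-1}|$. Fixing the normal-form representative once and for all and checking that every equivalence class has a unique such representative up to the residual $\bb{S}_{m-1}\times\bb{S}_{l-1}$ action — which is exactly the coinvariance implicit in $\sqcup_l$ and $\sqcup_r$ — is what keeps this computation under control.
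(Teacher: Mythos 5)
Your proposal is correct and follows essentially the same route as the paper's proof: identify the connected permutations $\bb{S}^c_{(l,1,\dots,1),(m,1,\dots,1)}$ as (choice of glued legs) $\times\,\bb{S}_{l-1}\times\bb{S}_{m-1}$, split the expression into an output half and an input half, and observe that the coinvariants over the glued-leg choice produce the restriction while the outer relabellings produce the induction. The bookkeeping you defer to Step 3 (tracking the residual $\bb{S}_{m-1}$- and $\bb{S}_{l-1}$-actions through the isomorphism $\bb{C}^m\otimes\bb{C}[\bb{S}_{m-1}]\simeq\bb{C}[\bb{S}_m]$ with no leftover factors) is precisely what the paper's proof carries out explicitly, and it goes through as you predict.
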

		
\begin{rem} Here we take the specific biarity where exactly one edge will connect the unique non trivial operation of $\mcal{P}$ with the unique non trivial operation of $\mcal{Q}$. This is a specific type of dioperadic composition, see \cite[Section 1]{Ga}. We use Notation \ref{NotaInfinitesimal} in Figure \ref{CompBlocksDrawing}.
\end{rem} 
		
\begin{figure}[h!]
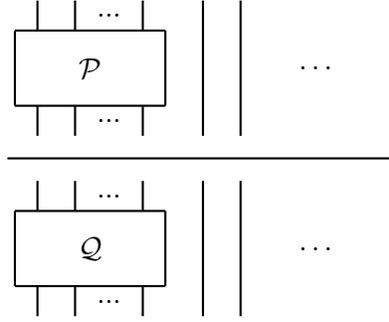
 \caption{Graphic representation of the composition of two operations} \label{CompBlocksDrawing}
\centering
\CompBlocksDrawing
\end{figure}
		
\begin{proof}
		
Let us denote by $A$ the left side of Theorem \ref{TheoComposBlocks}. We have, by definition,
		
\begin{align*}
A = &\left(\bigoplus_{\overline{k}, \overline{l}, \overline{m}, \overline{n}} \bb{C}[\bb{S}_{m + k - 1}] \otimes_{\bb{S}_{\overline{k}}} (\bb{C} \otimes \dots \otimes \mcal{Q}(k, l) \otimes \dots \otimes \bb{C}) \right. \\
&\left. \otimes_{\bb{S}_{\overline{l}}} \bb{C}[\bb{S}^c_{\overline{l}, \overline{m}}] \otimes_{\bb{S}_{\overline{m}}} (\bb{C} \otimes \dots \otimes \mcal{P}(m, n) \otimes \dots \otimes \bb{C}) \otimes_{\bb{S}_{\overline{n}}} \bb{C}[\bb{S}_{n + l - 1}]\right)/\sim 
\end{align*} 
the sum over tuples $\overline{k} = (1, \dots, k, \dots, 1)$, $\overline{l} = (1, \dots, l, \dots, 1)$, $\overline{m} = (1, \dots, m, \dots, 1)$, $\overline{n} = (1, \dots, n, \dots, 1)$, with $\nor{\overline{k}} = m + k - 1$, $\nor{\overline{l}} = \nor{\overline{m}} = l + m - 1$ and $\nor{\overline{n}} = n + l - 1$. But because of the relation $\sim$, we can take only the terms of the sum where $\mcal{Q}(k, l)$ and $\mcal{P}(m, n)$ appear in first place, thus removing the tensor products by $\bb{C}$, we find
	
\[A \simeq \left(\bb{C}[\bb{S}_{m + k - 1}] \otimes_{\bb{S}_k} \mcal{Q}(k, l) \otimes_{\bb{S}_l} \bb{C}[\bb{S}^c_{(l, 1, \dots, 1), (m, 1, \dots, 1)}] \otimes_{\bb{S}_m} \mcal{P}(m, n) \otimes_{\bb{S}_n} \bb{C}[\bb{S}_{n + l - 1}]\right)/\sim',\] where the relation $\sim'$ is given by \[\sigma_1 \otimes q \otimes \sigma_2 \otimes p \otimes \sigma_3 \sim' \sigma_1 (\id_k, \rho^{-1}) \otimes q \otimes (\id_l, \rho)\sigma_2(\id_m, \tau) \otimes p \otimes (\id_n, \tau^{-1})\sigma_3.\] for $\rho \in \bb{S}_{m - 1}$, $\tau \in \bb{S}_{l - 1}$, and where we denoted for $\sigma \in \bb{S}_n$ and $\sigma' \in \bb{S}_m$ by $(\sigma, \sigma')$ the corresponding permutation in $\bb{S}_{n + m}$. The first goal of this proof is to understand the bimodule $\bb{C}[\bb{S}^c_{(l, 1, \dots, 1), (m, 1, \dots, 1)}]$ as a left $\bb{S}_l$-module and a right $\bb{S}_m$-module. We have \[\bb{S}^c_{(l, 1, \dots, 1),(m, 1, \dots, 1)} \simeq \{1, \dots, l\} \times \bb{S}_{l - 1} \times \{1, \dots m\} \times \bb{S}_{m - 1}.\] Thus \[\bb{C}[\bb{S}^c_{(l, 1, \dots, 1),(m, 1, \dots, 1)}] \simeq \bb{C}^l \otimes \bb{C}[\bb{S}_{l - 1}] \otimes \bb{C}^m \otimes \bb{C}[\bb{S}_{m - 1}].\]  Let us first define two maps, for $i \in \{1, \dots, l\}$ :

\Fonction{\delta_i}{\{1, \dots, l - 1\}}{\{1, \dots, l\}}{a}{\begin{cases} a &\text{ if } a < i \\ a + 1 &\text{ if }a \geq i\end{cases}} and

\Fonction{s_i}{\{1, \dots, l\}}{\{1, \dots, l - 1\}}{a}{\begin{cases} a &\text{ if } a < i \\ a - 1 &\text{ if }a \geq i\end{cases}.} The left action of $\bb{S}_l$ on $\bb{C}^l \otimes \bb{C}[\bb{S}_{l - 1}]$ is given, for $(e_i)$ the canonical basis of $\bb{C}^l$, $i \in \{1, \dots, l\}$, $\sigma \in \bb{S}_l$ and $\rho \in \bb{S}_{l - 1}$, by \[\sigma \cdot (e_i \otimes \rho) = e_{\sigma(i)} \otimes s_{\sigma(i)} \sigma \delta_i \rho.\] The structure on $\bb{C}^m \otimes \bb{C}[\bb{S}_{m - 1}]$ on the right is given by a similar formula. Let us now define another structure on $\bb{C}^l \otimes \bb{C}[\bb{S}_{l - 1}]$ given by an isomorphism with $\bb{C}[\bb{S}_l]$. We define again two maps.
		
\Fonction{u}{\bb{S}_{l - 1}}{\bb{S}_l}{\sigma}{[\sigma(1), \dots, \sigma(l - 1), l]} and
		
\Fonction{d}{\{\sigma \in \bb{S}_l\text{ such that }\sigma(l) = l\}}{\bb{S}_{l - 1}}{\sigma}{[\sigma(1), \dots, \sigma(l - 1)].} We have a morphism of vector spaces 
		
\Fonction{\varphi}{\bb{C}^l \otimes \bb{C}[\bb{S}_{l - 1}]}{\bb{C}[\bb{S}_l]}{e_i \otimes \rho}{(i, i + 1, \dots, l)u(\rho),} which is an isomorphism, with \Fonction{\varphi^{-1}}{\bb{C}[\bb{S}_l]}{\bb{C}^l \otimes \bb{C}[\bb{S}_{l - 1}]}{\sigma}{e_{\sigma(l)} \otimes d\left((l, \dots, \sigma(l)) \sigma\right).} This isomorphism induces a left $\bb{S}_l$-module structure on $\bb{C}^l \otimes \bb{C}[\bb{S}_{l - 1}]$, given by 
		
\begin{align*}
\sigma \star (e_i \otimes \rho) &= \varphi^{-1}(\sigma \varphi(e_i \otimes \rho)) \\
&= \varphi^{-1}(\sigma (i, \dots, l) u(\rho)) \\
&= e_{\sigma(i)} \otimes d((l, \dots, \sigma(i))\sigma(i, \dots, l) u(\rho)) \\
&= \sigma \cdot (e_i \otimes \rho).
\end{align*} We can do the same computation for the right action of $\bb{S}_m$ and prove that, as $\bb{S}$-bimodules, we have \[\bb{C}[\bb{S}^c_{(l, 1, \dots, 1),(m, 1, \dots, 1)}] \simeq \bb{C}[\bb{S}_l] \boxtimes \bb{C}[\bb{S}_m].\]
		
Now $A$ is isomorphic to {\scriptsize \[\left(\bb{C}[\bb{S}_{m + k - 1}] \otimes_{\bb{S}_k} V \otimes W \otimes_{\bb{S}_l} (\bb{C}^l \otimes \bb{C}[\bb{S}_{l - 1}]) \otimes (\bb{C}^m \otimes \bb{C}[\bb{S}_{m - 1}]) \otimes_{\bb{S}_m} X \otimes Y \otimes_{\bb{S}_n} \bb{C}[\bb{S}_{n + l - 1}]\right)/\sim',\]} but the relation $\sim'$ can be cut in half saying that  $A$ is isomorphic to
	
{\scriptsize \[\left(\bb{C}[\bb{S}_{m + k - 1}] \otimes_{\bb{S}_k} V \otimes (\bb{C}^m \otimes \bb{C}[\bb{S}_{m - 1}]) \otimes_{\bb{S}_m} X\right)/\sim'_1 \otimes \left(W \otimes_{\bb{S}_l} (\bb{C}^l \otimes \bb{C}[\bb{S}_{l - 1}]) \otimes Y \otimes_{\bb{S}_n} \bb{C}[\bb{S}_{n + l - 1}]\right)/\sim'_2.\]}
	
Now let us study the first half 
		
\begin{align*}
B &:= \left(\bb{C}[\bb{S}_{m + k - 1}] \otimes_{\bb{S}_k} V \otimes (\bb{C}^m \otimes \bb{C}[\bb{S}_{m - 1}]) \otimes_{\bb{S}_m} X\right)/\sim'_1\\
&\simeq \bb{C}[\bb{S}_{m + k - 1}] \otimes_{\bb{S}_k \times \bb{S}_{m - 1}} (V \otimes ((\bb{C}^m \otimes \bb{C}[\bb{S}_{m - 1}]) \otimes_{\bb{S}_m} X)).
\end{align*} The idea now is to understand the left action of $\bb{S}_{m - 1}$ on $((\bb{C}^m \otimes \bb{C}[\bb{S}_{m - 1}]) \otimes_{\bb{S}_m} X)$. The action of $\bb{S}_{m-1}$ on $\bb{C}^m \otimes \bb{C}[\bb{S}_{m - 1}]$ is just the one on $\bb{C}[\bb{S}_{m - 1}]$. We have the sequence of isomorphisms \[((\bb{C}^m \otimes \bb{C}[\bb{S}_{m - 1}]) \otimes_{\bb{S}_m} X) \simeq \bb{C}[\bb{S}_m] \otimes_{\bb{S}_m} X \simeq X.\] The action of $\bb{S}_{m-1}$ on $\bb{C}[\bb{S}_m]$ in the second term is given by $\varphi$ : let $\rho \in \bb{S}_{m}$ and $\sigma \in \bb{S}_{m - 1}$, we have 
	
\begin{align*}
\sigma \cdot \rho &= \varphi(\sigma \varphi^{-1}(\rho)) \\
&= \varphi(e_{\rho(m)} \otimes \sigma (m, \dots \rho(m)) \rho) \\
&= (\rho(m), \dots, m) u(\sigma d( (m, \dots, \rho(m)) \rho)).
\end{align*} Thus we have $\sigma \cdot \id = u(\sigma)$, and the action of $\sigma \in \bb{S}_{m - 1}$ on $X$ is the action on ${}^{\bb{S}_m}_{\bb{S}_{m - 1}}\!\downarrow X$. Finally, we get \[B \simeq \bb{C}[\bb{S}_{m + k - 1}] \otimes_{\bb{S}_k \times \bb{S}_{m - 1}} (V \otimes {}^{\bb{S}_m}_{\bb{S}_{m - 1}}\!\downarrow X) = V \sqcup_l {}^{\bb{S}_m}_{\bb{S}_{m - 1}}\!\downarrow X.\] We do the same for the second half and get Theorem \ref{TheoComposBlocks}.
		
\end{proof}
		
A special case of this theorem is when $V, W, X$ and $Y$ are regular representations of respective symmetric groups.
		
\begin{coro} \label{CoroComposBlocks} If $V = \bb{C}[\bb{S}_k]$, $W = \bb{C}[\bb{S}_l]$, $X = \bb{C}[\bb{S}_m]$ and $Y = \bb{C}[\bb{S}_n]$, we have \[(\mcal{Q} \boxtimes_{(1)} \mcal{P})(m + k - 1, n + l - 1) = \bb{C}[\bb{S}_{m + k - 1} \times \bb{S}_{n + l - 1}^{\op}]^{\oplus lm}\]
\end{coro}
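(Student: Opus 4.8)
The plan is to obtain the Corollary as a direct specialization of Theorem \ref{TheoComposBlocks}, using only two standard facts about regular representations of finite groups: the induction of the regular representation of a subgroup $H \leq G$ is the regular representation of $G$, and the restriction of the regular representation of $G$ to $H$ is $\bb{C}[H]^{\oplus [G:H]}$.

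First I would substitute $V = \bb{C}[\bb{S}_k]$, $W = \bb{C}[\bb{S}_l]$, $X = \bb{C}[\bb{S}_m]$, $Y = \bb{C}[\bb{S}_n]$ into the isomorphism of Theorem \ref{TheoComposBlocks}, obtaining
\[(\mcal{Q} \boxtimes_{(1)} \mcal{P})(m + k - 1, n + l - 1) \simeq \bigl(\bb{C}[\bb{S}_k] \sqcup_l {}^{\bb{S}_m}_{\bb{S}_{m-1}}\!\downarrow \bb{C}[\bb{S}_m]\bigr) \boxtimes \bigl(\bb{C}[\bb{S}_l]\downarrow^{\bb{S}_l}_{\bb{S}_{l-1}} \sqcup_r \bb{C}[\bb{S}_n]\bigr).\]
Then I would rewrite each factor. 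For the left one, ${}^{\bb{S}_m}_{\bb{S}_{m-1}}\!\downarrow \bb{C}[\bb{S}_m] \simeq \bb{C}[\bb{S}_{m-1}]^{\oplus m}$ since $[\bb{S}_m : \bb{S}_{m-1}] = m$; as $\sqcup_l = {}^{\bb{S}_{k+m-1}}_{\bb{S}_k \times \bb{S}_{m-1}}\!\uparrow(-\otimes -)$ is additive in each variable, $\bb{C}[\bb{S}_k] \sqcup_l \bb{C}[\bb{S}_{m-1}]^{\oplus m} \simeq (\bb{C}[\bb{S}_k] \sqcup_l \bb{C}[\bb{S}_{m-1}])^{\oplus m}$; and since $\bb{C}[\bb{S}_k] \otimes \bb{C}[\bb{S}_{m-1}] \simeq \bb{C}[\bb{S}_k \times \bb{S}_{m-1}]$ is the regular representation of the subgroup $\bb{S}_k \times \bb{S}_{m-1} \leq \bb{S}_{k+m-1}$, its induction is $\bb{C}[\bb{S}_{k+m-1}]$. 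Hence the left factor is $\bb{C}[\bb{S}_{k+m-1}]^{\oplus m}$, and by the symmetric argument the right factor is $\bb{C}[\bb{S}_{l+n-1}]^{\oplus l}$.

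To conclude I would use that $\boxtimes$ (the external product of a left module with a right module) is additive in each variable, so the expression becomes $(\bb{C}[\bb{S}_{k+m-1}] \boxtimes \bb{C}[\bb{S}_{l+n-1}])^{\oplus lm}$, and then that $\bb{C}[\bb{S}_{k+m-1}] \boxtimes \bb{C}[\bb{S}_{l+n-1}] \simeq \bb{C}[\bb{S}_{k+m-1} \times \bb{S}_{l+n-1}^{\op}]$, the standard identification of the regular representation of a product group with the external product of the regular representations, using that a right $\bb{S}_{l+n-1}$-module is a left $\bb{S}_{l+n-1}^{\op}$-module and that $\bb{C}[\bb{S}_{l+n-1}]$ is the regular representation of $\bb{S}_{l+n-1}^{\op}$ as well. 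This gives $\bb{C}[\bb{S}_{m+k-1} \times \bb{S}_{n+l-1}^{\op}]^{\oplus lm}$.

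There is no genuine obstacle: all the content is in Theorem \ref{TheoComposBlocks}, and what remains is bookkeeping. The only points deserving a moment of care are verifying that $\sqcup_l$, $\sqcup_r$ and $\boxtimes$ commute with finite direct sums (immediate, as they are built from induction and tensor products) and matching the biarity indices $(m+k-1, n+l-1)$ between the Theorem and the Corollary.
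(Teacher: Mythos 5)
Your proposal is correct and follows essentially the same route as the paper: specialize Theorem \ref{TheoComposBlocks} and use that restriction of the regular representation to $\bb{S}_{m-1}$ (resp.\ $\bb{S}_{l-1}$) gives $m$ (resp.\ $l$) copies of the smaller regular representation, which is exactly the fact the paper's proof invokes. You merely make explicit the remaining bookkeeping (induction of a regular representation of a subgroup being regular, and additivity of $\sqcup_l$, $\sqcup_r$, $\boxtimes$) that the paper leaves implicit.
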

		
\begin{proof} This result is given by the fact that, as left $\bb{S}_l$-modules, \[{}^{\bb{S}_l}_{\bb{S}_{l - 1}}\!\downarrow \bb{C}[\bb{S}_l] \simeq \bb{C}[\bb{S}_{l - 1}]^{\oplus l}.\] and as right $\bb{S}_m$-modules, we have \[\bb{C}[\bb{S}_m]\downarrow^{\bb{S}_m}_{\bb{S}_{m - 1}} \simeq \bb{C}[\bb{S}_{m - 1}]^{\oplus m}\] A proof of this property is given in the proof of Theorem \ref{TheoComposBlocks}.
\end{proof}

\subsection{Koszul duality for properads}

Here we just give an idea of Koszul duality for properads. For a precise presentation, see \cite{Va}. The idea is the following :

\begin{itemize}
\item One can define coproperads as comonoids in the monoidal category $(\bb{S}-\Bimod, \boxtimes, I)$.
\item One can define the differential graded versions of $\bb{S}$-bimodules, properads and coproperads : dg $\bb{S}$-bimodules, dg properads and dg coproperads.
\item To any quadratic properad $\mcal{P}$, that is $\mcal{P} = \mcal{F}(E)/(R)$ with $R \subset \mcal{F}^{(2)}(E)$, one can define its Koszul dual dg-coproperad $\mcal{P}^{\text{\textexclamdown}}$.
\item We say that a quadratic properad $\mcal{P}$ is Koszul if the complex $\mcal{P} \boxtimes \mcal{P}^{\text{\textexclamdown}}$ is acyclic.
\item For a quadratic properad $\mcal{P} := \mcal{F}(E)/(R)$, one can define the Koszul dual properad of $\mcal{P}$ by generators and relations :  $\mcal{P}^{\text{!}} := \mcal{F}(E)/(R^{\perp})$
\end{itemize}

\begin{rem} We do not give the differential for the complex $\mcal{P} \boxtimes \mcal{P}^{\text{\textexclamdown}}$ in this paper because we will only need dimensions and multiplicities of the chains in order to prove non Koszulness.
\end{rem}

We also remind an important tool to prove Koszulness of properads. Let $V$ and $W$ be two $\bb{S}$-bimodules, $R \subset \mcal{F}^{(2)}(V)$, $S \subset \mcal{F}^{(2)}(W)$ and \[D \subset (V \boxtimes_{(1)} W) \oplus (W \boxtimes_{(1)} V).\] We denote by $\mcal{A}$ and $\mcal{B}$ the properads given respectively by $\mcal{F}(V)/(R)$ and $\mcal{F}(W)/(S)$, and set \[\mcal{P} := \mcal{F}(V \oplus W)/(R \oplus D_{\lambda} \oplus S).\]

\begin{defin}[Replacement rule, {\cite[Section 8]{Va}}] Let $\lambda$ be a morphism of $\bb{S}$-bimodules \[\lambda : W \boxtimes_{(1)} V \rightarrow V \boxtimes_{(1)} W\] such that $D$ is the image of \[(\id, -\lambda) : W \boxtimes_{(1)} V \rightarrow (W \boxtimes_{(1)} V) \oplus (V \boxtimes_{(1)} W).\]
				
If the two morphisms \[\underbrace{\mcal{A}}_{1} \boxtimes \underbrace{\mcal{B}}_{2} \rightarrow \mcal{P} \text{ and } \underbrace{\mcal{A}}_{2} \boxtimes \underbrace{\mcal{B}}_{1} \rightarrow \mcal{P}\] are injective, we call $\lambda$ a \textit{replacement rule} and we denote $D$ by $D_{\lambda}$.
\end{defin}
				
\begin{theo}[{\cite[Proposition 8.4]{Va}}] \label{TheoReplacement} Let $\mcal{P}$ be a properad of the form \[\mcal{P} = \mcal{F}(V \oplus W)/(R \oplus D_{\lambda} \oplus S)\] with $R \subset \mcal{F}^{(2)}(V)$, $S \subset \mcal{F}^{(2)}(W)$ and $\lambda$ a replacement rule, and such that $\sum_{n, m} \dim((V \oplus W)(n, m))$ is finite and $\mcal{A} := \mcal{F}(V)/(R)$ and $\mcal{B} := \mcal{F}(W)/(S)$ are Koszul properads. Then $\mcal{P}$ is Koszul.
\end{theo}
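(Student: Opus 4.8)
The plan is to adapt to properads the distributive-law argument of Koszul duality theory (see \cite[Chapter 8]{LoVa} in the operadic case). First I would use the replacement rule to identify the underlying $\bb{S}$-bimodule of $\mcal{P}$. Any element of $\mcal{F}(V\oplus W)$ is a linear combination of connected graphs whose vertices carry labels in $V$ or in $W$; using $\lambda$ one rewrites, modulo $D_\lambda$, every $W$-labelled vertex so that it sits below the $V$-labelled vertices it composes with, and the two resulting ``pure'' layers are then reduced modulo $R$ and modulo $S$. This exhibits $\mcal{A}\boxtimes\mcal{B}\twoheadrightarrow\mcal{P}$ as surjective, while injectivity in each weight bigrading $\underbrace{\mcal{A}}_{i}\boxtimes\underbrace{\mcal{B}}_{j}\hookrightarrow\mcal{P}$ is precisely the injectivity hypothesis built into the notion of replacement rule. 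Hence $\mcal{P}\cong\mcal{A}\boxtimes\mcal{B}$ as weight-graded $\bb{S}$-bimodules, the finiteness of $\sum_{n,m}\dim((V\oplus W)(n,m))$ guaranteeing that each biarity is finite dimensional and that the rewriting terminates biarity by biarity.

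Next I would dualize. Since $R$, $S$ and $D_\lambda$ are homogeneous quadratic and $D_\lambda$ is the graph of $\lambda$, the orthogonal relations $R^{\perp}\oplus D_{\lambda}^{\perp}\oplus S^{\perp}$ are governed by an induced replacement rule $\lambda^{\vee}\colon V\boxtimes_{(1)}W\to W\boxtimes_{(1)}V$ on the Koszul duals; applying the previous step to $\mcal{P}^{\text{!}}$ gives $\mcal{P}^{\text{!}}\cong\mcal{B}^{\text{!}}\boxtimes\mcal{A}^{\text{!}}$, and dually $\mcal{P}^{\text{\textexclamdown}}\cong\mcal{B}^{\text{\textexclamdown}}\boxtimes\mcal{A}^{\text{\textexclamdown}}$ as weight-graded $\bb{S}$-bimodules, with the coproperad structure assembled from those of $\mcal{A}^{\text{\textexclamdown}}$, $\mcal{B}^{\text{\textexclamdown}}$ and $\lambda^{\vee}$.

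Then I would examine the Koszul complex. Combining the two identifications, \[\mcal{P}\boxtimes\mcal{P}^{\text{\textexclamdown}}\;\cong\;\mcal{A}\boxtimes\mcal{B}\boxtimes\mcal{B}^{\text{\textexclamdown}}\boxtimes\mcal{A}^{\text{\textexclamdown}}\] as weight-graded $\bb{S}$-bimodules, the middle factor being the Koszul complex of $\mcal{B}$. I would filter this complex by the weight carried by the outer factors $\mcal{A}$ and $\mcal{A}^{\text{\textexclamdown}}$; by the finiteness hypothesis the filtration is bounded in each biarity, so the associated spectral sequence converges. Its page-$0$ differential is $\id_{\mcal{A}}\boxtimes d_{\mcal{B}}\boxtimes\id_{\mcal{A}^{\text{\textexclamdown}}}$, where $d_{\mcal{B}}$ is the Koszul differential of $\mcal{B}$; since $\bb{C}$ has characteristic zero every $\otimes_{\bb{S}_k}$ inside $\boxtimes$ is a tensor product over a semisimple algebra, hence $\boxtimes$ is exact in each variable, and page $1$ equals $\mcal{A}\boxtimes H_{\bullet}(\mcal{B}\boxtimes\mcal{B}^{\text{\textexclamdown}})\boxtimes\mcal{A}^{\text{\textexclamdown}}$. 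Koszulness of $\mcal{B}$ collapses this to $\mcal{A}\boxtimes I\boxtimes\mcal{A}^{\text{\textexclamdown}}=\mcal{A}\boxtimes\mcal{A}^{\text{\textexclamdown}}$ equipped with the Koszul differential of $\mcal{A}$, which is acyclic because $\mcal{A}$ is Koszul. So $\mcal{P}\boxtimes\mcal{P}^{\text{\textexclamdown}}$ is acyclic, i.e. $\mcal{P}$ is Koszul.

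The main obstacle will be the first step in the properad setting: one must make sure that the rewriting ``push every $W$-vertex below the $V$-vertices via $\lambda$'' terminates on connected graphs and produces a well-defined normal form, so that $\mcal{A}\boxtimes\mcal{B}\to\mcal{P}$ is an isomorphism and not merely a surjection --- the connected composition product $\boxtimes$ is combinatorially much heavier than the operadic $\circ$, and one has to exclude infinite rewriting chains. Matching the page-$0$ differential of the last step with the external product of the Koszul differentials of $\mcal{A}$ and $\mcal{B}$ is the other bookkeeping-heavy point, but it becomes formal once the first step and the exactness of $\boxtimes$ are available. For full details see \cite[Proposition 8.4]{Va}.
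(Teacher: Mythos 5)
The paper does not actually prove this statement: it is quoted directly from \cite[Proposition 8.4]{Va}, so there is no internal proof to compare against, and your sketch must be judged on its own. Its overall architecture (identify $\mcal{P}\cong\mcal{A}\boxtimes\mcal{B}$, dualize the distributive law, filter the Koszul complex and run a spectral sequence) does match the known proof, but there is one genuine gap, and it sits at the very first step. You assert that injectivity of $\mcal{A}\boxtimes\mcal{B}\to\mcal{P}$ ``in each weight bigrading'' is ``precisely the injectivity hypothesis built into the notion of replacement rule.'' It is not: the definition used here (and in \cite{Va}) only requires injectivity of the two weight-$3$ components $\underbrace{\mcal{A}}_{1}\boxtimes\underbrace{\mcal{B}}_{2}\to\mcal{P}$ and $\underbrace{\mcal{A}}_{2}\boxtimes\underbrace{\mcal{B}}_{1}\to\mcal{P}$. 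Upgrading this local, weight-$3$ condition to an isomorphism $\mcal{A}\boxtimes\mcal{B}\cong\mcal{P}$ in all weights is the main content of the theorem, and it is exactly where the Koszulness of $\mcal{A}$ and $\mcal{B}$ is used a first time (in the operadic case this is the ``diamond lemma for distributive laws'' of \cite[Section 8.6]{LoVa}); the proof of that step is itself a filtration/spectral-sequence comparison of Koszul complexes, not a normal-form argument. Your rewriting procedure only yields surjectivity of $\mcal{A}\boxtimes\mcal{B}\to\mcal{P}$; even if it terminates, termination gives no injectivity without confluence in every weight, which is precisely what cannot be assumed and must be proved.

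A second, smaller point: the claim that $\boxtimes$ ``is exact in each variable'' because the $\otimes_{\bb{S}_k}$ are taken over semisimple group algebras is a lemma of \cite{Va} rather than a formality. Unlike the operadic $\circ$, the connected composition product is polynomial (not linear) in both arguments, so preservation of quasi-isomorphisms requires the weight grading and the finiteness hypothesis on $\sum_{n,m}\dim((V\oplus W)(n,m))$ to keep the relevant sums finite in each biarity and weight, together with a K\"unneth argument. With these two points repaired, the remaining steps (dualizing to $\mcal{P}^{\text{\textexclamdown}}\cong\mcal{B}^{\text{\textexclamdown}}\boxtimes\mcal{A}^{\text{\textexclamdown}}$ and collapsing the filtered Koszul complex using Koszulness of $\mcal{B}$ and then of $\mcal{A}$) agree with the standard argument.
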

				
This theorem proves Koszulness of a properad, based on Koszulness of two other properads. Most of the time, we use this theorem in the case where $V$ is  given by operations (with one output) and $W$ is given by cooperations (with one input), thus $\mcal{A}$ and $\mcal{B}^{\op}$ are operads, where $\mcal{B}^{\op}$ is the reversed $\bb{S}$-bimodule of $\mcal{B}$ (see \cite[Section 8]{Va}), and Koszulness of $\mcal{A}$ and $\mcal{B}$ can be proved using operadic tools such as rewriting theory. Moreover, proving that $\lambda$ is a replacement rule can be done computing dimensions of modules.

Using the Koszul dual properad $\mcal{P}^{\text{!}}$, one can find multiplicities of the Koszul dual coproperad $\mcal{P}^{\text{\textexclamdown}}$.

\begin{prop} \label{PropDual} If we have the decomposition into isotypic components of \[\mcal{P}^{\text{!}}(m, n) = \sum_{(\lambda, \mu) \vdash (m, n)} V_{\lambda, \mu}^{\oplus m_{\lambda, \mu}(\mcal{P}^{\text{!}}(m, n))},\] then we have \[\mcal{P}^{\text{\textexclamdown}}(m, n) = \sum_{(\lambda, \mu) \vdash (m, n)} V_{\lambda, \mu}^{\oplus m_{\lambda', \mu'}(\mcal{P}^{\text{!}}(m, n))}.\] In other words, $m_{\lambda, \mu}(\mcal{P}^{\text{!}}(m, n)) = m_{\lambda', \mu'}(\mcal{P}^{\text{\textexclamdown}}(m, n))$. 
\end{prop}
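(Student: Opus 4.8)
The plan is to deduce the statement from the known comparison, recalled in \cite{Va}, between the Koszul dual coproperad $\mcal{P}^{\text{\textexclamdown}}$ and the Koszul dual properad $\mcal{P}^{\text{!}}$, together with two elementary facts on representations of symmetric groups. The first thing I would do is extract from \cite{Va} the precise statement that, in each biarity $(m, n)$ and each weight, $\mcal{P}^{\text{\textexclamdown}}$ is obtained from $\mcal{P}^{\text{!}}$ by taking the linear dual and twisting by the sign representations on the inputs and on the outputs (there is also a homological degree shift and a one-dimensional sign factor coming from the edge/genus data, but these play no role for multiplicities of irreducibles). Concretely, there is an isomorphism of $\bb{S}_m \times \bb{S}_n^{\op}$-modules
\[
\mcal{P}^{\text{\textexclamdown}}(m, n) \simeq \mcal{P}^{\text{!}}(m, n)^* \otimes \left(\sgn_m \boxtimes \sgn_n\right),
\]
up to homological degree, which is the input I would cite precisely.

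Next I would record the two representation-theoretic facts. First, every complex irreducible representation of a symmetric group is self-dual (its character is real-valued), hence for a finite-dimensional representation $V$ of $\bb{S}_n$ one has $m_\lambda(V^*) = m_\lambda(V)$ for all $\lambda \vdash n$, and componentwise $m_{\lambda, \mu}(W^*) = m_{\lambda, \mu}(W)$ for a finite-dimensional $\bb{S}_m \times \bb{S}_n^{\op}$-module $W$. Second, $V_\lambda \otimes \sgn_n \simeq V_{\lambda'}$, so if $V = \bigoplus_{\lambda \vdash n} V_\lambda^{\oplus m_\lambda(V)}$ then $V \otimes \sgn_n \simeq \bigoplus_{\lambda} V_{\lambda'}^{\oplus m_\lambda(V)} = \bigoplus_{\lambda} V_\lambda^{\oplus m_{\lambda'}(V)}$, that is $m_\lambda(V \otimes \sgn_n) = m_{\lambda'}(V)$; tensoring an $\bb{S}_m \times \bb{S}_n^{\op}$-module $W$ with $\sgn_m \boxtimes \sgn_n$ therefore conjugates the partitions on both legs, $m_{\lambda, \mu}\!\left(W \otimes (\sgn_m \boxtimes \sgn_n)\right) = m_{\lambda', \mu'}(W)$.

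Finally I would combine these. Starting from the isotypic decomposition $\mcal{P}^{\text{!}}(m, n) = \bigoplus_{(\lambda, \mu) \vdash (m, n)} V_{\lambda, \mu}^{\oplus m_{\lambda, \mu}(\mcal{P}^{\text{!}}(m, n))}$, the linear dual has the same multiplicities, and tensoring with $\sgn_m \boxtimes \sgn_n$ replaces each $V_{\lambda, \mu}$ by $V_{\lambda', \mu'}$ with unchanged multiplicity; reindexing along the involution $(\lambda, \mu) \leftrightarrow (\lambda', \mu')$ on pairs of partitions then yields $m_{\lambda, \mu}(\mcal{P}^{\text{\textexclamdown}}(m, n)) = m_{\lambda', \mu'}(\mcal{P}^{\text{!}}(m, n))$, which is exactly the claim. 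I expect the only genuine obstacle to be the first step: pinning down, from \cite{Va}, that the properadic suspension/duality normalizing $\mcal{P}^{\text{\textexclamdown}}$ against $\mcal{P}^{\text{!}}$ contributes on each biarity precisely the sign characters $\sgn_m$ and $\sgn_n$ (up to a trivial one-dimensional twist and a homological degree shift), with no further $\bb{S}_m \times \bb{S}_n^{\op}$-action that would disturb the conjugation pattern; granting that, the remainder is just the two routine facts above.
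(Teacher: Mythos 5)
Your argument is correct and is exactly the standard justification for this statement: the paper itself states Proposition \ref{PropDual} without proof, and the intended reasoning is precisely the relation $\mcal{P}^{\text{!}} \simeq (\mcal{P}^{\text{\textexclamdown}})^*$ twisted by signature representations on inputs and outputs (Vallette's properadic suspension), combined with self-duality of $V_{\lambda}$ and $V_{\lambda}\otimes \sgn \simeq V_{\lambda'}$. Your derivation of $m_{\lambda,\mu}(\mcal{P}^{\text{\textexclamdown}}(m,n)) = m_{\lambda',\mu'}(\mcal{P}^{\text{!}}(m,n))$ is equivalent to the stated identity since conjugation of partitions is an involution.
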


\section{Representation theory tools for properads}

In this section we present the two main representation theoretic tools : Pieri's rules and representation matrices.

\subsection{The Pieri rules}
    		
Here we remind general results on representations. Note that we can state these results for left or right modules, but here we consider only left modules. Let $V$ and $W$ be respectively representations of $\bb{S}_n$ and $\bb{S}_m$. The following rule is very useful if one wants to understand the representation structure of $V \sqcup_l W$. If we know the isotypic decompositions of $V$ and $W$, the Littlewood--Richardson numbers give us the decomposition of $V \sqcup_l W$. The proof of this rule can be found in \cite[Theorem 4.9.4]{Sa}.
    		
\begin{defin}[See {\cite[Appendix A]{FuHa}} and {\cite[Theorem 4.9.4]{Sa}}] Let $\lambda \vdash n$, $\mu = (\mu_1, \mu_2, \dots, \mu_k) \vdash m$ and $\nu \vdash m + n$ be three partitions. We call Littlewood--Richardson tableau of type $(\lambda, \mu ; \nu)$ any Young diagram of shape $\lambda$, completed into a Young diagram of shape $\nu$ by boxes filled by $\mu_1$ $1$s, $\mu_2$ $2$s, $\dots$, $\mu_k$ $k$s such that each row is non decreasing and each column is strictly increasing, and such that if one takes the sequence formed by these numbers listed from right to left starting from the first row to the last one, one has at any point more $p$s than $p+1$s for $1 \leq p \leq k - 1$.
\end{defin}
    			
\begin{theo}[Littlewood--Richardson's rule] For $n, m \in \bb{N}$, $\lambda \vdash n$ and $\mu \vdash m$, we have \[V_{\lambda} \sqcup_l V_{\mu} = \bigoplus_{\nu \vdash n + m} V_{\nu}^{\oplus N_{\lambda\mu}^{\nu}},\] where the $N_{\lambda\mu}^{\nu}$ are the Littlewood--Richardson numbers, which are the number of Littlewood--Richardson tableaux of type $(\lambda, \mu ; \nu)$.
\end{theo}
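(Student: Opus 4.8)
The plan is to pass through the theory of symmetric functions by means of the Frobenius characteristic map. Let $R = \bigoplus_n R(\bb{S}_n)$ be the representation ring of the symmetric groups, graded by $n$ and equipped with the induction product $\sqcup_l$ (extending the operation in the statement $\bb{S}$-bilinearly), and let $\Lambda = \bigoplus_n \Lambda^n$ be the ring of symmetric functions with its standard Hall inner product. First I would recall, or quote from \cite[Chapter 4]{Sa}, that the Frobenius characteristic $\mathrm{ch} : R \to \Lambda$ is a graded ring isomorphism sending each irreducible $V_\lambda$ to the Schur function $s_\lambda$, and that it is an isometry, with $\{s_\nu\}_{\nu \vdash n}$ an orthonormal basis of $\Lambda^n$. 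The key compatibility $\mathrm{ch}(V \sqcup_l W) = \mathrm{ch}(V)\,\mathrm{ch}(W)$ is immediate once one unwinds the definition of $\sqcup_l$ as induction from $\bb{S}_n \times \bb{S}_m$ to $\bb{S}_{n+m}$ and applies the double-coset formula for induced characters. Granting this, the theorem is equivalent to the identity $s_\lambda s_\mu = \sum_{\nu \vdash n+m} N_{\lambda\mu}^\nu\, s_\nu$ in $\Lambda^{n+m}$, where $N_{\lambda\mu}^\nu$ is the number of Littlewood--Richardson tableaux of type $(\lambda,\mu;\nu)$.

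Next I would identify the coefficient combinatorially. Using the isometry together with the adjointness of multiplication by $s_\mu$ and the skewing operator $s_\mu^{\perp}$, the coefficient becomes $N_{\lambda\mu}^\nu = \langle s_\lambda s_\mu,\, s_\nu\rangle = \langle s_\mu,\, s_{\nu/\lambda}\rangle$, where $s_{\nu/\lambda}$ is the skew Schur function attached to the skew shape $\nu/\lambda$. Expanding $s_{\nu/\lambda} = \sum_T x^{T}$ over the semistandard Young tableaux $T$ of shape $\nu/\lambda$ and pairing against the monomial and Schur bases, one gets that $\langle s_\mu, s_{\nu/\lambda}\rangle$ counts the semistandard tableaux of shape $\nu/\lambda$ and content $\mu$ whose reverse reading word is a lattice (ballot) word. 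Transporting such a filling onto the boxes that complete $\lambda$ to $\nu$ recovers exactly the definition of a Littlewood--Richardson tableau of type $(\lambda,\mu;\nu)$, so this number is $N_{\lambda\mu}^\nu$ as claimed.

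The genuine content — and the step I expect to be the main obstacle — is establishing that $\langle s_\mu, s_{\nu/\lambda}\rangle$, which a priori is only presented as a signed sum (via Jacobi--Trudi, or via the Kostka-number expansion $s_{\nu/\lambda} = \sum_\mu K_{\nu/\lambda,\mu}\, m_\mu$ together with the inverse Kostka matrix), is the honest count of lattice-word tableaux. This is the Littlewood--Richardson rule proper, and the cleanest route is through Knuth equivalence and jeu de taquin: one shows that two words lie in the same plactic class if and only if their insertion tableaux coincide, that $s_{\nu/\lambda}$ is the generating function of reading words of skew tableaux of shape $\nu/\lambda$, and that each plactic class of content $\mu$ contains exactly one lattice word, namely the reading word of the superstandard tableau $U_\mu$; matching plactic classes with Littlewood--Richardson fillings then yields the count. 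I would either carry this out following \cite[Theorem 4.9.4]{Sa}, or — since the remainder of the paper uses only the resulting numbers $N_{\lambda\mu}^\nu$ and, in practice, the Pieri specialization — simply invoke that reference together with \cite[Appendix A]{FuHa}.
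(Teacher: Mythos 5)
The paper gives no proof of this theorem at all: it simply defers to the cited reference (\cite[Theorem 4.9.4]{Sa}), and your sketch --- Frobenius characteristic, reduction to $s_\lambda s_\mu = \sum_\nu N_{\lambda\mu}^\nu s_\nu$, and the plactic/jeu de taquin identification of the coefficient --- is a correct outline of exactly the argument found there. Since you too ultimately invoke that reference for the genuinely hard combinatorial step, your route coincides with the paper's.
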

    			
\begin{ex} Littlewood--Richardson's rule allows us for example to compute \[V_{(2, 1)} \sqcup_l V_{(2, 1)} = V_{(4, 2)} \oplus V_{(4, 1, 1)} \oplus V_{(3, 3)} \oplus V_{(3, 2, 1)}^{\oplus 2} \oplus V_{(3, 1, 1, 1)} \oplus V_{(2, 2, 2)} \oplus V_{(2, 2, 1, 1)}.\]
    			
In fact, the corresponding Littlewood--Richardson tableaux are \[\begin{ytableau} \  & \  & 1 & 1 \\ \  & 2 \end{ytableau}\ , \ \ \begin{ytableau} \  & \  & 1 & 1 \\ \  \\ 2 \end{ytableau}\ , \ \ \begin{ytableau} \  & \  & 1 \\ \  & 1 & 2 \end{ytableau}\ , \ \ \begin{ytableau} \  & \  & 1 \\ \  & 2 \\ 1 \end{ytableau}\ , \ \ \begin{ytableau} \  & \  & 1 \\ \  & 1 \\ 2 \end{ytableau}\ , \ \ \begin{ytableau} \  & \  & 1 \\ \  \\ 1 \\ 2 \end{ytableau}\ , \ \ \begin{ytableau} \  & \  \\ \  & 1 \\ 1 & 2 \end{ytableau}\ \text{ and }\ \begin{ytableau} \  & \  \\ \  & 1 \\ 1 \\ 2 \end{ytableau}\] \end{ex}
    		
Consequences of the Littlewood--Richardson rule are Pieri's rule and its dual form.
    		
\begin{theo}[Pieri's rules] For $n, m \in \bb{N}$ and $\lambda \vdash n$, we have \[V_{\lambda} \sqcup_l V_{(m)} = \bigoplus_{\substack{\mu \vdash n + m \\ \mu \geq^c \lambda}} V_{\mu},\] the sum over the partitions $\mu \vdash n + m$ which can be obtained from $\lambda$ by adding boxes to its Young diagram, maximum one by column, this explains the notation $\geq^c$. We also have \[V_{\lambda} \sqcup_l V_{(1^m)} = \bigoplus_{\substack{\mu \vdash n + m \\ \mu \geq^l \lambda}} V_{\mu},\] the sum over the partitions $\nu \vdash n + m$ which can be obtained from $\lambda$ by adding boxes to its Young diagram, maximum one by row, this explains the notation $\geq^l$.
\end{theo}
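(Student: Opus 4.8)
The plan is to derive both identities from the Littlewood--Richardson rule stated just above, by specialising the partition $\mu$ to the one-row partition $(m)$ in the first case and to the one-column partition $(1^m)$ in the second, and checking in each case that every Littlewood--Richardson number $N_{\lambda\mu}^{\nu}$ is $0$ or $1$, with value $1$ exactly for the shapes $\nu$ described. In other words, Pieri's rules are the two extreme instances of Littlewood--Richardson where the combinatorics of the tableaux degenerates.

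First I would treat $V_{\lambda} \sqcup_l V_{(m)}$. Since $\mu = (m)$ has a single part, every box adjoined to the diagram of $\lambda$ to produce $\nu$ is filled with a $1$. The requirement that each column be strictly increasing then simply forbids two of these new boxes from lying in the same column; the requirement that each row be non-decreasing is automatic; and the lattice-word condition on the reading word is vacuous, since the only entry used is $1$. Hence a Littlewood--Richardson tableau of type $(\lambda, (m); \nu)$ exists if and only if $\nu/\lambda$ is a horizontal strip of $m$ boxes, i.e.\ $\nu$ is obtained from $\lambda$ by adding $m$ boxes with at most one per column, and in that case it is unique. Thus $N_{\lambda,(m)}^{\nu} \in \{0,1\}$ with the asserted support, and $V_{\lambda} \sqcup_l V_{(m)} = \bigoplus_{\mu \geq^c \lambda} V_{\mu}$.

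Next I would treat $V_{\lambda} \sqcup_l V_{(1^m)}$. Now $\mu = (1^m)$, so the adjoined boxes carry exactly one $1$, one $2$, $\dots$, one $m$. Because each value occurs exactly once, the reading word is a permutation of $1, \dots, m$, and the lattice condition forces that permutation to be the word $1, 2, \dots, m$ itself (reading prefixes, the $j$-th entry read must be $j$). I would then observe that if two new boxes lay in a common row, then, reading that row from right to left, the right-hand (larger) entry would be read before the left-hand (smaller) one, producing a descent and contradicting that the reading word is $1, 2, \dots, m$. Hence $\nu/\lambda$ has at most one new box per row, i.e.\ it is a vertical strip of $m$ boxes; conversely, for any such $\nu$, filling the new boxes with $1, 2, \dots, m$ in order of increasing row index yields a manifestly unique Littlewood--Richardson tableau. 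So $N_{\lambda,(1^m)}^{\nu} \in \{0,1\}$ with support the vertical strips, and $V_{\lambda} \sqcup_l V_{(1^m)} = \bigoplus_{\mu \geq^l \lambda} V_{\mu}$.

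The only point requiring a little care is the reading-word analysis in the second case — pinning down \emph{simultaneously} the uniqueness of the filling and the vertical-strip shape; the rest is a direct unfolding of the definition. If one prefers to bypass even that, the second Pieri rule follows from the first by tensoring $V_{\lambda} \sqcup_l V_{(m)}$ with the sign representation of $\bb{S}_{n+m}$: this sends $V_{\lambda} \mapsto V_{\lambda'}$ and $V_{(m)} \mapsto V_{(1^m)}$ and, via the projection formula for the induction $\sqcup_l$, is compatible with $\sqcup_l$, while conjugation $\nu \mapsto \nu'$ turns the horizontal strips above $\lambda$ into the vertical strips above $\lambda'$; relabelling $\lambda'$ as $\lambda$ gives the claim.
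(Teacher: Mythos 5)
Your proposal is correct and follows exactly the route the paper intends: the paper states Pieri's rules as immediate consequences of the Littlewood--Richardson rule (citing \cite{Sa} and giving no further argument), and your specialisation of $\mu$ to $(m)$ and $(1^m)$, with the horizontal-strip and vertical-strip analyses of the tableaux, is the standard way to fill in that deduction. Both the direct reading-word argument and the alternative sign-twist derivation of the second rule are sound.
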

    			
From these rules one can deduce two corollaries, the first one allows us to compute the isotypic decomposition of any $\bb{S}_n$-module as an $\bb{S}_{n+1}$-module.
    		
\begin{coro} \label{coroPieri1} For $n \in \bb{N}$, $\lambda \vdash n$ and $\mu \vdash n + 1$, we have \[m_{\mu}({}_{\bb{S}_n}^{\bb{S}_{n + 1}}\!\uparrow(V_{\lambda})) = \begin{cases} 1 &\text{ if } \mu \geq \lambda \\ 0 &\text{ else.} \end{cases}\]
    		
\end{coro}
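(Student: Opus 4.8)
The plan is to deduce this branching rule directly from Pieri's rule, which is already available. First I would identify the induced representation with a $\sqcup_l$-product: since $\bb{S}_1$ is the trivial group and $V_{(1)}$ is its (unique, trivial) irreducible, we have $\bb{S}_n \times \bb{S}_1 = \bb{S}_n$ and $V_\lambda \otimes V_{(1)} \simeq V_\lambda$ as $\bb{S}_n$-modules, so by the very definition of $\sqcup_l$ recalled above,
\[
{}^{\bb{S}_{n+1}}_{\bb{S}_n}\!\uparrow(V_\lambda) \;=\; {}^{\bb{S}_{n+1}}_{\bb{S}_n \times \bb{S}_1}\!\uparrow(V_\lambda \otimes V_{(1)}) \;=\; V_\lambda \sqcup_l V_{(1)}.
\]

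Next I would apply the first Pieri rule with $m = 1$, which gives
\[
V_\lambda \sqcup_l V_{(1)} \;=\; \bigoplus_{\substack{\mu \vdash n+1 \\ \mu \geq^c \lambda}} V_\mu.
\]
Then I would make the elementary observation that adding a single box to the Young diagram of $\lambda$ can never place two boxes in the same column, so the condition $\mu \geq^c \lambda$ is equivalent to ``$\mu$ is obtained from $\lambda$ by adding exactly one box'', i.e. to $\lambda \subseteq \mu$ with $\lvert\mu\rvert = \lvert\lambda\rvert + 1$, which is precisely what $\mu \geq \lambda$ means here. Reading off multiplicities from the displayed decomposition yields $m_\mu({}^{\bb{S}_{n+1}}_{\bb{S}_n}\!\uparrow V_\lambda) = 1$ when $\mu \geq \lambda$ and $0$ otherwise, which is the claim.

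There is essentially no obstacle here: the only points requiring a word are the bookkeeping identifications $\bb{S}_n \times \bb{S}_1 \simeq \bb{S}_n$ and $V_\lambda \otimes V_{(1)} \simeq V_\lambda$, and the remark that a single added box is automatically ``at most one per column'', so the relation $\geq^c$ collapses to ordinary containment of Young diagrams differing by one box. As an alternative route one could bypass $\sqcup_l$ altogether and use Frobenius reciprocity, $m_\mu({}^{\bb{S}_{n+1}}_{\bb{S}_n}\!\uparrow V_\lambda) = m_\lambda({}^{\bb{S}_{n+1}}_{\bb{S}_n}\!\downarrow V_\mu)$, combined with the dual Pieri rule; both arguments are equally short.
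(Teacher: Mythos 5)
Your argument is correct and is essentially identical to the paper's proof: both identify ${}^{\bb{S}_{n+1}}_{\bb{S}_n}\!\uparrow V_\lambda$ with $V_\lambda \sqcup_l V_{(1)}$ and then apply Pieri's rule, observing that adding a single box makes the condition $\mu \geq^c \lambda$ collapse to $\mu \geq \lambda$. No issues.
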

    			
\begin{proof} Pieri's rule gives us \[{}_{\bb{S}_n}^{\bb{S}_{n + 1}}\!\uparrow V_{\lambda} = {}_{\bb{S}_n}^{\bb{S}_{n + 1}}\!\uparrow(V_{\lambda} \otimes \bb{C}) = V_{\lambda} \sqcup_l V_{(1)} = \bigoplus_{\substack{\mu \vdash n + 1 \\ \mu \geq^c \lambda}} V_{\mu}\] the sum over partitions $\mu \vdash n + 1$ which can be obtained from $\lambda$ by adding one box, that is $\mu \geq \lambda$.
    			
\end{proof}
    			
And the second one allows us to compute any $\bb{S}_n$-module as an $\bb{S}_{n-1}$-module.
    		
\begin{coro} \label{coroPieri2} For $n \in \bb{N}$, $\lambda \vdash n$ and $\mu \vdash n - 1$, we have \[m_{\mu}({}^{\bb{S}_n}_{\bb{S}_{n - 1}}\!\downarrow(V_{\lambda})) = \begin{cases} 1 &\text{ if } \mu \leq \lambda \\ 0 &\text{ else.} \end{cases}\]
    		
\end{coro}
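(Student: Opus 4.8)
The plan is to deduce this statement from Corollary \ref{coroPieri1} by a Frobenius reciprocity argument, exactly dual to the way \ref{coroPieri1} was obtained from Pieri's rule. First I would recall that for a subgroup $H \leq G$, the induction and restriction functors are adjoint on both sides (since we work over $\bb{C}$ and all groups are finite), so in particular for any $\bb{S}_n$-module $V_\lambda$ and any $\bb{S}_{n-1}$-module $V_\mu$ one has
\[
\Hom_{\bb{S}_{n-1}}\!\left({}^{\bb{S}_n}_{\bb{S}_{n-1}}\!\downarrow V_\lambda,\, V_\mu\right) \simeq \Hom_{\bb{S}_n}\!\left(V_\lambda,\, {}^{\bb{S}_n}_{\bb{S}_{n-1}}\!\uparrow V_\mu\right).
\]
Since both sides are semisimple, $m_\mu\bigl({}^{\bb{S}_n}_{\bb{S}_{n-1}}\!\downarrow V_\lambda\bigr)$ equals the dimension of the left-hand $\Hom$-space (as $V_\mu$ is irreducible), which equals the dimension of the right-hand space, namely $m_\lambda\bigl({}^{\bb{S}_n}_{\bb{S}_{n-1}}\!\uparrow V_\mu\bigr)$.

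Next I would apply Corollary \ref{coroPieri1} with $n$ replaced by $n-1$: it gives $m_\lambda\bigl({}^{\bb{S}_n}_{\bb{S}_{n-1}}\!\uparrow V_\mu\bigr) = 1$ if $\lambda \geq \mu$ (i.e.\ the Young diagram of $\lambda$ is obtained from that of $\mu$ by adding one box) and $0$ otherwise. Combining the two displays yields
\[
m_\mu\!\left({}^{\bb{S}_n}_{\bb{S}_{n-1}}\!\downarrow V_\lambda\right) = m_\lambda\!\left({}^{\bb{S}_n}_{\bb{S}_{n-1}}\!\uparrow V_\mu\right) = \begin{cases} 1 & \text{if } \mu \leq \lambda \\ 0 & \text{else,}\end{cases}
\]
which is the claim, since the condition $\lambda \geq \mu$ (add a box to $\mu$) is literally the same as $\mu \leq \lambda$ (remove a box from $\lambda$).

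Alternatively, if one prefers to avoid invoking two-sided adjunction explicitly, one can argue directly: apply the second Pieri rule (the $V_{(1^m)}$ version) or simply note that $\sqcup_l$ and $\downarrow$ are related by the fact that $\bb{C}[\bb{S}_n]\downarrow^{\bb{S}_n}_{\bb{S}_{n-1}}$ is the regular representation of $\bb{S}_{n-1}$ repeated $n$ times, and decompose $V_\lambda$ inside $\bb{C}[\bb{S}_n]$; but the Frobenius reciprocity route is cleanest. The only mild subtlety — and the one point worth stating carefully — is the symmetry of the "add/remove a box" relation between $\lambda$ and $\mu$: $\mu \leq \lambda$ as partitions of consecutive integers means precisely that $\mu$ sits inside $\lambda$ with $|\lambda/\mu| = 1$, and this is a self-dual condition, so no case-checking on the shapes is needed beyond observing this equivalence. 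I do not anticipate any real obstacle here; the result is a standard consequence of \ref{coroPieri1} and semisimplicity.
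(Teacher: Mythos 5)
Your argument is correct and is essentially identical to the paper's own proof, which also deduces the statement from Corollary \ref{coroPieri1} via Frobenius reciprocity (the multiplicity identity $m_{\mu}({}^{\bb{S}_n}_{\bb{S}_{n-1}}\!\downarrow V_{\lambda}) = m_{\lambda}({}^{\bb{S}_n}_{\bb{S}_{n-1}}\!\uparrow V_{\mu})$). Your additional remarks on semisimplicity and the self-duality of the add/remove-a-box relation are just a more explicit spelling-out of the same steps.
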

    			
\begin{proof} This result comes from Frobenius reciprocity, see \cite[Corollary 3.20]{FuHa}, and Corollary \ref{coroPieri1}. One consequence of Frobenius reciprocity is, for $\lambda \vdash n$ and $\mu \vdash n - 1$,
    			
\[m_{\mu}({}^{\bb{S}_n}_{\bb{S}_{n - 1}}\!\downarrow(V_{\lambda})) = m_{\lambda}({}^{\bb{S}_n}_{\bb{S}_{n - 1}}\!\uparrow(V_{\mu})) = \begin{cases} 1 &\text{ if } \mu \leq \lambda \\ 0 &\text{ else} \end{cases}\]
    			
\end{proof}
    		
\begin{exs} For $\lambda$ given by the Young diagram \[\ydiagram{2, 2},\] and $\mu_1$, $\mu_2$ and $\mu_3$ given respectively by the diagrams \[\ydiagram{3}\ ,\ \  \ydiagram{2, 1}\ \text{ and }\ \ydiagram{4, 1},\] we have \[m_{\mu_1}((V_{\lambda})\downarrow^{\bb{S}_4}_{\bb{S}_3}) = 0, m_{\mu_2}((V_{\lambda})\downarrow^{\bb{S}_4}_{\bb{S}_3}) = 1\text{ and }m_{\mu_3}((V_{\lambda})\uparrow^{\bb{S}_5}_{\bb{S}_4}) = 0\]
\end{exs}

\subsection{Representation matrices}

\label{SecRepmat}

Here we describe a method presented by M.Bremner and V.Dotsenko in \cite{BrDo} to compute multiplicities of operads by generators and relations knowing a basis of the free properad over the generators and a generating family of the space of relations, both in a given arity. Then we generalize this method to properads given by generators and relations.

\subsubsection{The case of $\bb{S}$-modules}

Let $n \in \bb{N}$ and $d_{\lambda}$ be the dimension of the irreducible representation $V_{\lambda}$ for every $\lambda \vdash n$. We have an isomorphism of vector spaces \[\varphi : \bb{C}[\bb{S}_n] \rightarrow \bigoplus_{\lambda \vdash n} \mcal{M}_{d_{\lambda}}(\bb{C}),\] where $\mcal{M}_m(\bb{C})$ is the space of matrices of dimension $m \times m$ over $\bb{C}$. This isomorphism induces the structure of a representation of $\bb{S}_n$ on $\bigoplus_{\lambda \vdash n} \mcal{M}_{d_{\lambda}}(\bb{C})$. See for example \cite[Part 1]{BrMaPe} for the construction of this morphism. For every $\lambda \vdash n$, we denote by $\varphi_{\lambda}$ the composition of $\varphi$ with the projection onto $\mcal{M}_{d_{\lambda}}(\bb{C})$, and we have that $\varphi_{\lambda}$ restricts to an isomorphism from $V_{\lambda}^{\oplus d_{\lambda}}$ to $\mcal{M}_{d_{\lambda}}(\bb{C})$. We denote this isomorphism by $P_{\lambda}$.
    			
\begin{prop} \label{propCliftonoperad} Let $R(n)$ be the $\bb{S}_n$-module generated by $x_1, \dots, x_p$ in $\bb{C}[\bb{S}_n]^{\oplus q}$, and for every $1 \leq i \leq p$, $x_i = (x_i^{(1)}, \dots, x^{(q)}_i)$ their decompositions according to the direct sum $\bb{C}[\bb{S}_{n}]^{\oplus q}$. Then we have, for every $\lambda \vdash n$, \[m_{\lambda}(R(n)) = \rk \left( \left(P_{\lambda}(x^{(j)}_i)\right)_{\substack{1 \leq i \leq p \\ 1 \leq j \leq q}}\right)\]
    			
Moreover, we call the block matrix $\left(P_{\lambda}(x^{(j)}_i)\right)_{\substack{1 \leq i \leq p \\ 1 \leq j \leq q}}$ the representation matrix of $R(n)$ for $\lambda$.
\end{prop}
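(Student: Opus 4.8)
The plan is to pass through the Artin--Wedderburn decomposition $\varphi : \bb{C}[\bb{S}_n] \xrightarrow{\sim} \bigoplus_{\mu \vdash n} \mcal{M}_{d_\mu}(\bb{C})$ introduced above, reduce the multiplicity of $V_\lambda$ in $R(n)$ to a dimension count inside the single $\lambda$-block, and then translate the left $\bb{S}_n$-action on $\mcal{M}_{d_\lambda}(\bb{C})$ into an elementary statement about row spaces of matrices. Here $P_\lambda(a)$ for $a \in \bb{C}[\bb{S}_n]$ is understood as $\varphi_\lambda(a) \in \mcal{M}_{d_\lambda}(\bb{C})$, that is, $P_\lambda$ applied to the image of $a$ in its $V_\lambda^{\oplus d_\lambda}$-isotypic component.

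\emph{Reduction to one block.} Applying $\varphi$ in each of the $q$ summands identifies $\bb{C}[\bb{S}_n]^{\oplus q}$ with $\bigoplus_{\mu \vdash n} \mcal{M}_{d_\mu}(\bb{C})^{\oplus q}$ as a left $\bb{S}_n$-module. As the $\mu$-blocks are cut out by central idempotents, the submodule generated by a family of elements is the direct sum of the submodules they generate in each block; thus $R(n)$ corresponds to $\bigoplus_{\mu} R_\mu$ with $R_\mu \subseteq \mcal{M}_{d_\mu}(\bb{C})^{\oplus q}$ generated by the $p$ elements $\big(\varphi_\mu(x_i^{(1)}), \dots, \varphi_\mu(x_i^{(q)})\big)$. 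Since $\mcal{M}_{d_\mu}(\bb{C})^{\oplus q}$ is $V_\mu$-isotypic, so is $R_\mu$, whence $m_\lambda(R(n)) = m_\lambda(R_\lambda)$ and $\dim_{\bb{C}} R_\lambda = d_\lambda\, m_\lambda(R_\lambda)$. Write $d := d_\lambda$ and $\xi_i^{(j)} := P_\lambda(x_i^{(j)}) = \varphi_\lambda(x_i^{(j)})$.

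\emph{Computing $\dim R_\lambda$.} On $\mcal{M}_d(\bb{C})$ the left $\bb{S}_n$-action is left multiplication by the matrices $\varphi_\lambda(\sigma)$, and $\varphi_\lambda$ is surjective onto $\mcal{M}_d(\bb{C})$; hence $R_\lambda$ is the image of the left-module homomorphism $\mcal{M}_d(\bb{C})^{\oplus p} \to \mcal{M}_d(\bb{C})^{\oplus q}$, $(A_1, \dots, A_p) \mapsto \big(\sum_i A_i\xi_i^{(1)}, \dots, \sum_i A_i\xi_i^{(q)}\big)$. Left multiplication acts on rows by linear combinations: writing $b_i \in \bb{C}^d$ for the $k$-th row of $A_i$ and $b = (b_1, \dots, b_p)$, the $k$-th rows of the $q$ output blocks form the length-$dq$ vector $b\,\mathbf{X}$, where $\mathbf{X}$ is the $dp \times dq$ block matrix whose $(i,j)$-block is $\xi_i^{(j)} = P_\lambda(x_i^{(j)})$ --- exactly the representation matrix. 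As $b$ ranges over $\bb{C}^{dp}$ this sweeps out the row space of $\mathbf{X}$, of dimension $\rk(\mathbf{X})$; and as the $d$ rows of the $A_i$'s vary freely and independently, $R_\lambda$ is precisely the subspace of $\mcal{M}_d(\bb{C})^{\oplus q}$ consisting of the tuples each of whose $d$ row-slices lies in that row space. Therefore $\dim_{\bb{C}} R_\lambda = d\cdot\rk(\mathbf{X})$, so $m_\lambda(R(n)) = m_\lambda(R_\lambda) = \rk(\mathbf{X})$, which is the asserted formula.

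\emph{Main obstacle.} No genuine computation is involved; the only care needed is in the left/right and row/column bookkeeping. One must check that left multiplication on $\mcal{M}_d(\bb{C})$ mixes \emph{rows} --- so it is the row space, not the column space, of $\mathbf{X}$ that appears --- that $R_\lambda$ equals, and is not merely contained in, the subspace described above, and that the $d$ row-slices decouple so that the factor $d$ cancels cleanly against $d_\lambda = \dim V_\lambda$. The analogous statement for right modules, which is what is ultimately needed for $\bb{S}$-bimodules, follows by transposing, and since rank is transpose-invariant the formula is unchanged.
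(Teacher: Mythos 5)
Your argument is correct and follows essentially the same route as the paper: you project onto the $\lambda$-block of the Wedderburn decomposition, note that this projection of $R(n)$ has dimension $d_\lambda\, m_\lambda(R(n))$, and then compute the dimension of the $\mcal{M}_{d_\lambda}(\bb{C})$-submodule generated by the images via a row-space argument. That last step is exactly the content of the paper's Lemma \ref{lemma1Clifton}, which you reprove inline rather than cite, so there is nothing substantive to add.
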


\begin{ex} Let $\mcal{P}$ be an operad given by generators $E$ and relations $R$ such that, for given $n, q \in \bb{N}$, the underlying $\bb{S}_n$-module of $\mcal{F}(E)(n)$ is isomorphic to $q$ copies of the regular representation, that is \[\mcal{F}(E)(n) = \bb{C}[\bb{S}_n]^{\oplus q} = \bigoplus_{\lambda \vdash n} V_{\lambda}^{\oplus qd_{\lambda}},\] and such that $R(n)$ is generated, as an $\bb{S}_n$-module, by a finite number of elements in $\mcal{F}(E)(n)$, say $x_1, \dots, x_p$. In that case, Proposition \ref{propCliftonoperad} gives us the isotypic decomposition of $R(n)$ computing the ranks of matrices.
\end{ex}

\begin{rem} The advantage of dealing with these block matrices is that they are smaller that the matrix representing the whole $R(n)$, and their ranks are therefore easier to compute. 
\end{rem}
    			
To prove this proposition, we will need a lemma.
    			
\begin{lemme} \label{lemma1Clifton} Let $n \in \bb{N}$, $A_{i, j} \in \mcal{M}_n(\bb{C}) =: \mcal{M}$ for every $1 \leq i \leq p$ and $1 \leq j \leq q$. We have \[\dim(\mcal{M} \cdot (A_{1, 1}, \dots, A_{1, q}) + \dots + \mcal{M} \cdot (A_{p, 1}, \dots, A_{p, q})) = \rk\left( \left( A_{i, j}\right)_{\substack{1 \leq i \leq p \\ 1 \leq j \leq q}} \right) \cdot n,\] where $\mcal{M}\cdot (A_{i, 1}, \dots, A_{i, q}) := \spn\{(NA_{i, 1}, \dots, NA_{i, q}), N \in \mcal{M}\}$.
\end{lemme}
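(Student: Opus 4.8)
The plan is to reduce the statement to a standard fact about the column space of a block matrix. First I would unwind the definition: the left-hand side is the dimension of the span
\[
S := \sum_{i=1}^{p} \mcal{M}\cdot(A_{i,1},\dots,A_{i,q}) = \spn\bigl\{(NA_{i,1},\dots,NA_{i,q}) : 1\le i\le p,\ N\in\mcal{M}\bigr\}
\]
sitting inside $\mcal{M}^{\oplus q}\simeq \bb{C}^{nq}$. The key observation is that multiplying the tuple $(A_{i,1},\dots,A_{i,q})$ on the left by an arbitrary $N\in\mcal{M}$ acts row-by-row: $N$ ranges over all of $\mcal{M}$, and a generating set for the row spans is obtained by letting $N$ run over the elementary matrices $E_{ab}$. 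The tuple $E_{ab}\cdot(A_{i,1},\dots,A_{i,q})$ has the $b$-th row of each $A_{i,\bullet}$ placed in row $a$ and zeros elsewhere. Hence $S$ is spanned by the vectors obtained by taking, for each $i$ and each $b$, the concatenated row vector $r_{i,b} := (\text{row}_b A_{i,1}\mid\cdots\mid\text{row}_b A_{i,q})\in\bb{C}^{nq}$ and placing it in any one of the $n$ row-slots of $\mcal{M}^{\oplus q}$.

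Next I would exploit that the $n$ row-slots are ``independent copies'': $S = \bigoplus_{a=1}^{n} S_a$ where $S_a$ is the span of $\{r_{i,b} : i,b\}$ placed in slot $a$. Each $S_a$ is isomorphic, as a vector space, to the row space of the matrix $R$ whose rows are all the $r_{i,b}$; that row space has dimension $\rk(R)$. Therefore $\dim S = n\cdot\rk(R)$. It remains to identify $\rk(R)$ with $\rk\bigl((A_{i,j})_{i,j}\bigr)$, the rank of the $(pn)\times(qn)$ block matrix with blocks $A_{i,j}$. But $R$ is exactly this block matrix up to a reordering of rows: collecting the rows $r_{i,b}$ for fixed $i$ and $b=1,\dots,n$ gives precisely the block-row $(A_{i,1}\mid\cdots\mid A_{i,q})$, and ranging over all $i$ recovers the full block matrix. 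A permutation of rows does not change the rank, so $\rk(R) = \rk\bigl((A_{i,j})_{i,j}\bigr)$, which finishes the proof.

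The only place requiring care — the ``main obstacle'', though it is more bookkeeping than difficulty — is making the direct sum decomposition $S=\bigoplus_{a=1}^n S_a$ rigorous, i.e. checking that the span over all $N\in\mcal{M}$ really does decouple across the $n$ row-slots. This follows because $\mcal{M}=\bigoplus_{a,b}\bb{C}E_{ab}$ and left multiplication by $E_{ab}$ only ever writes into row $a$, so the generating vectors for distinct slots $a$ live in complementary coordinate subspaces of $\mcal{M}^{\oplus q}$; their spans are automatically in direct sum and each is a faithful copy of $\spn\{r_{i,b}\}$. Once this is in place the dimension count and the rank identification are immediate.
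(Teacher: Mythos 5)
Your proposal is correct and follows essentially the same route as the paper: both arguments pass to the concatenated block rows $(A_{i,1}\mid\cdots\mid A_{i,q})$, observe that left multiplication by $N\in\mcal{M}$ makes each of the $n$ row-slots independently range over the common row span of all these block rows, and conclude that the dimension is $n$ times the rank of the full block matrix. Your use of elementary matrices $E_{ab}$ is just a basis-level rephrasing of the paper's explicit computation with the entries $N_{a,b}$.
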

    			
\begin{proof} We denote by $V$ the space \[V := \mcal{M} \cdot (A_{1, 1}, \dots, A_{1, q}) + \dots + \mcal{M} \cdot (A_{p, 1}, \dots, A_{p, q}).\] Let us first remark that we have an isomorphism 
    			
\begin{align*}
\mcal{M}^{\oplus q} &\simeq \mcal{M}_{n, qn}(\bb{C}) \\
(A_1, \dots, A_q) &\mapsto (A_1 | \cdots | A_q).
\end{align*}
    			
Let us fix $1 \leq i \leq p$, we have \[\mcal{M} \cdot (A_{i, 1}, \dots, A_{i, q}) \simeq \spn\{(NA_{i, 1} | \cdots | NA_{i, q}), N \in \mcal{M}\}.\] We now denote for $1 \leq j \leq q$ the rows of $A_{i, j}$ by $L_{1, j}, \dots, L_{n, j}$ : \[A_{i, j} = \begin{pmatrix} L_{1, j} \\ \hline \vdots \\ \hline L_{n, j} \end{pmatrix}.\] Thus the vector space $\mcal{M} \cdot (A_{i, 1}, \dots, A_{i, q})$ is isomorphic to \[\spn\left\{ \begin{pmatrix} \begin{matrix} N_{1, 1}L_{1, 1} + \dots + N_{1, n}L_{n, 1} \\ \hline \vdots \\ \hline N_{n, 1}L_{1, 1} + \dots + N_{n, n}L_{n, 1} \end{matrix} & \begin{vmatrix} \\ \\ \cdots \\ \\ \\ \end{vmatrix} & \begin{matrix} N_{1, 1}L_{1, q} + \dots + N_{1, n}L_{n, q} \\ \hline \vdots \\ \hline N_{n, 1}L_{1, q} + \dots + N_{n, n}L_{n, q} \end{matrix} \end{pmatrix}, N \in \mcal{M}\right\}.\] If we denote the rows of $(A_{i, 1} | \cdots | A_{i, q})$ by $L_1^{(i)}, \dots, L_n^{(i)}$ : \[(A_{i, 1} | \cdots | A_{i, q}) = \begin{pmatrix} L_1^{(i)} \\ \hline \vdots \\ \hline L_n^{(i)} \end{pmatrix},\] we get 
    			
\begin{align*}
\mcal{M} \cdot (A_{i, 1}, \dots, A_{i, q}) &\simeq \spn\left\{ \begin{pmatrix} N_{1, 1}L_1^{(i)} + \dots + N_{1, n}L_n^{(i)} \\ \hline \vdots \\ \hline N_{n, 1}L_1^{(i)} + \dots + N_{n, n}L_n^{(i)} \end{pmatrix}, N \in \mcal{M}\right\} \\
&= \begin{pmatrix} \spn(L_1^{(i)}, \dots, L_n^{(i)}) \\ \hline \vdots \\ \hline \spn(L_1^{(i)}, \dots, L_n^{(i)}) \end{pmatrix}.
\end{align*} Finally, we have 
    			
\begin{align*}
V &= \begin{pmatrix} \spn(L_1^{(1)}, \dots, L_n^{(1)}) \\ \hline \vdots \\ \hline \spn(L_1^{(1)}, \dots, L_n^{(1)}) \end{pmatrix} + \dots + \begin{pmatrix} \spn(L_1^{(p)}, \dots, L_n^{(p)}) \\ \hline \vdots \\ \hline \spn(L_1^{(p)}, \dots, L_n^{(p)}) \end{pmatrix} \\
&= \begin{pmatrix} \spn(L_1^{(1)}, \dots, L_n^{(1)}, \dots, L_1^{(p)}, \dots, L_n^{(p)}) \\ \hline \vdots \\ \hline \spn(L_1^{(1)}, \dots, L_n^{(1)}, \dots, L_1^{(p)}, \dots, L_n^{(p)}) \end{pmatrix}.
\end{align*} But the dimension of the space $\spn(L_1^{(1)}, \dots, L_n^{(1)}, \dots, L_1^{(p)}, \dots, L_n^{(p)})$ is $\rk\left( \left( A_{i, j}\right)_{\substack{1 \leq i \leq p \\ 1 \leq j \leq q}} \right)$, which concludes the proof.
\end{proof}
    			
\begin{proof}[Proof of Proposition \ref{propCliftonoperad}] We have the following commutative diagram \[\xymatrix{R(n) \ar@{^{(}->}[r] & \bb{C}[\bb{S}_n]^{\oplus q} \ar[r]^{\varphi_{\lambda}^{\oplus q}} \ar@<-2pt>[d]_p & M_{d_{\lambda}}(\bb{C})^{\oplus q} \\ & V_{\lambda}^{\oplus qd_{\lambda}} \ar@<-2pt>[u]_i \ar[ur]^{P_{\lambda}^{\oplus q}} & }\] with $p$ and $i$ respectively the projection and inclusion of $V_{\lambda}^{\oplus qd_{\lambda}}$ in $\bb{C}[\bb{S}_n]^{\oplus q}$. Then \[\dim(\varphi_{\lambda}^{\oplus q}(R(n)) = \dim(p(R(n))) = m_{\lambda}(R(n)) d_{\lambda}.\] The goal is now to compute $\dim(\varphi_{\lambda}^{\oplus q}(R(n))$.
    			
As $R(n)$ is generated by $x_1, \dots, x_p$, $\varphi_{\lambda}^{\oplus q}(R(n))$ is generated by $\varphi_{\lambda}^{\oplus q}(x_1), \dots, \varphi_{\lambda}^{\oplus q}(x_p)$. But the action of $\bb{S}_n$ on $\mcal{M}_{d_{\lambda}}(\bb{C})^{\oplus q}$ is given by the isomorphism $\varphi^{\oplus q}$. Moreover, the projection $\varphi_{\lambda}^{\oplus q}$ of $\varphi^{\oplus q}$ on $\mcal{M}_{d_{\lambda}}(\bb{C})^{\oplus q}$ is surjective, and \[\varphi_{\lambda}^{\oplus q}(R(n)) = \mcal{M}_{d_{\lambda}}(\bb{C}) \cdot \varphi_{\lambda}^{\oplus q}(x_1) + \dots + \mcal{M}_{d_{\lambda}}(\bb{C}) \cdot \varphi_{\lambda}^{\oplus q}(x_p).\] But because $\varphi_{\lambda}^{\oplus q}(x_i) = (P_{\lambda}(x_i^{(1)}), \dots, P_{\lambda}(x_1)(x_i^{(q)}))$, we have, by Lemma \ref{lemma1Clifton}, \[\dim(\varphi_{\lambda}^{\oplus q}(R(n))) = \rk \left( \left(P_{\lambda}(x^{(j)}_i)\right)_{\substack{1 \leq i \leq p \\ 1 \leq j \leq q}}\right) \cdot d_{\lambda},\] which proves Proposition \ref{propCliftonoperad}.
    			
\end{proof}
			
As an example of application, we refer the reader to \cite{BrDo} where M. Bremner and V. Dotsenko classified the parametrized one-relation operads that are regular using this method. In their case, the matrix encoding all identities is $120 \times 120$ in the polynomial ring $\bb{C}[x_1, x_2, x_3, x_4, x_5, x_6]$, which may seem small enough to compute, but when one wants to compute determinental ideals and their Gröbner bases, this is way too much. The method using representation matrices turns this matrix into five matrices of sizes $5d_{\lambda} \times 5d_{\lambda}$ with $\lambda\vdash 4$, thus $d_{\lambda} = 1, 3, 2, 3, 1$, which they managed to compute.
			
Moreover, this method is not only faster than computing the complete matrix, but it also gives more information on the space of relations.

\subsubsection{The case of $\bb{S}$-bimodules}

Let $n, m \in \bb{N}$. Let us denote $\lambda \vdash m$ and $\mu \vdash n$ by $(\lambda, \mu) \vdash (m, n)$. The regular representation $\bb{C}[\bb{S}_m \times \bb{S}_n^{\op}]$ has the decomposition into isotypic components \[\bb{C}[\bb{S}_m \times \bb{S}_n^{\op}] = \bigoplus_{(\lambda, \mu) \vdash (m, n)} (V_{\lambda} \boxtimes V_{\mu}^{\op})^{\oplus d_{\lambda}d_{\mu}}.\] Indeed we have the sequence of isomorphisms of modules
    			
\begin{align*} 
\bb{C}[\bb{S}_m \times \bb{S}_n^{\op}] &\simeq \bb{C}[\bb{S}_m] \boxtimes \bb{C}[\bb{S}_n^{\op}] \\
&\simeq \left(\bigoplus_{\lambda \vdash m} V_{\lambda}^{\oplus d_{\lambda}}\right) \boxtimes \left(\bigoplus_{\mu \vdash n} (V_{\mu}^{\op})^{\oplus d_{\mu}}\right) \\
&= \bigoplus_{(\lambda, \mu) \vdash (m, n)} V_{\lambda}^{\oplus d_{\lambda}} \boxtimes (V_{\mu}^{\op})^{\oplus d_{\mu}} \\
&= \bigoplus_{(\lambda, \mu) \vdash (m, n)} (V_{\lambda} \boxtimes V_{\mu}^{\op})^{\oplus d_{\lambda}d_{\mu}}.
\end{align*}
    			
\begin{rem} The representations $V_{\lambda} \boxtimes V_{\mu} ^{\op}$ are irreducible and the isomorphism classes of all irreducible representations of $\bb{S}_m \times \bb{S}_n^{\op}$ if $\lambda$ runs through the partitions of $m$ and $\mu$ runs through the partitions of $n$. \end{rem}
    			
Thus let us denote, for a representation $V$ of $\bb{S}_m \times \bb{S}_n^{\op}$, by $m_{\lambda, \mu}(V)$ the multiplicity of $V_{\lambda} \boxtimes V_{\mu}^{\op}$ in $V$ and by $[V]_ {\lambda, \mu}:= (V_{\lambda} \boxtimes V_{\mu})^{\oplus m_{\lambda, \mu}(V)}$ the corresponding isotypic component. In this case we have an isomorphism of algebras \[\varphi : \bb{C}[\bb{S}_m \times \bb{S}_n^{\op}] \rightarrow \bigoplus_{(\lambda, \mu) \vdash (m, n)} \mcal{M}_{d_{\lambda}}(\bb{C}) \otimes \mcal{M}_{d_{\mu}}(\bb{C})\] given by the composition 
\begin{align*}\bb{C}[\bb{S}_m \times \bb{S}_n^{\op}] \overset{\sim}{\rightarrow} \bb{C}[\bb{S}_m] \otimes \bb{C}[\bb{S}_n] &\overset{\sim}{\rightarrow} \left(\bigoplus_{\lambda \vdash m} \mcal{M}_{d_{\lambda}}(\bb{C})\right) \otimes \left(\bigoplus_{\mu \vdash n} \mcal{M}_{d_{\mu}}(\bb{C})\right) \\
&= \bigoplus_{(\lambda, \mu) \vdash (m, n)} \mcal{M}_{d_{\lambda}}(\bb{C}) \otimes \mcal{M}_{d_{\mu}}(\bb{C}).
\end{align*}
    			
Moreover, for any $n, m \in \bb{N}$, we have an isomorphism \[\mcal{M}_n(\bb{C}) \otimes \mcal{M}_m(\bb{C}) \simeq \mcal{M}_{nm}(\bb{C})\] given by the composition
    			
\begin{align*} 
\mcal{M}_n(\bb{C}) \otimes \mcal{M}_m(\bb{C}) \rightarrow &(\bb{C}^n)^* \otimes \bb{C}^n \otimes (\bb{C}^m)^* \otimes \bb{C}^m \\
= &(\bb{C}^n)^* \otimes (\bb{C}^m)^* \otimes \bb{C}^n \otimes \bb{C}^m \rightarrow (\bb{C}^{nm})^* \otimes \bb{C}^{nm} \rightarrow \mcal{M}_{nm}(\bb{C})
\end{align*} which is exactly the Kronecker product of matrices. 
    			
\begin{defin} For $A \in \mcal{M}_n(\bb{C})$ and $B \in \mcal{M}_m(\bb{C})$, the Kronecker product $A\odot B$ is the block matrix in $\mcal{M}_{nm}(\bb{C})$ given by \[A \odot B = \left(A_{ij}B\right)_{1 \leq i, j \leq n}.\] 
\end{defin}

\begin{ex} For example, if $B \in \mcal{M}_n(\bb{C})$ and \[A = \begin{pmatrix}1 & 2 \\ 3 & 4 \end{pmatrix},\] we have \[A \odot B = \begin{pmatrix} B & 2B \\ 3B & 4B \end{pmatrix}.\]
\end{ex}
    			
\begin{property} \label{KroneckerProp}Let $n, m \in \bb{N}$, $M, A \in \mcal{M}_n(\bb{C})$ and $N, B \in \mcal{M}_m(\bb{C})$. We have \[(MA) \odot (NB) = (M \odot N)(A \odot B).\]
\end{property}
    			
\begin{proof} We have \begin{align*} (M \odot N)(A \odot B) &= \left(M_{i,j} N\right)_{i,j} \left(A_{i,j}B\right)_{i,j} \\
&= \left( \left(\sum_{k = 1}^n M_{i,k} A_{k,j}\right)NB\right)_{i,j} \\
&= \left(\left(MA\right)_{i,j} NB\right)_{i,j} \\
&= (MA) \odot (NB).
\end{align*}
\end{proof}
    			
We will denote by $\varphi_{\lambda,\mu}$ the projection of $\varphi$ on $\mcal{M}_{d_{\lambda}}(\bb{C}) \otimes \mcal{M}_{d_{\mu}}(\bb{C})$. This restricts to an isomorphism $P_{\lambda, \mu}$ from $(V_{\lambda} \boxtimes V_{\mu}^{\op})^{d_{\lambda}d_{\mu}}$ to $\mcal{M}_{d_{\lambda}}(\bb{C}) \otimes \mcal{M}_{d_{\mu}}(\bb{C})$. We can now state Proposition \ref{propCliftonoperad} but for $\bb{S}$-bimodules.
    			
\begin{prop} \label{propCliftonproperad} Let $R(m, n)$ be the $\bb{S}_m \times \bb{S}_n^{\op}$-module generated by $x_1, \dots, x_p \in \bb{C}[\bb{S}_m \times \bb{S}_n^{\op}]^{\oplus q}$, and for every $1 \leq i \leq p$, $x_i = (x_i^{(1)}, \dots, x^{(q)}_i)$ their decompositions along $\bb{C}[\bb{S}_m \times \bb{S}_n^{\op}]^{\oplus q}$. Then we have, for every $(\lambda, \mu) \vdash (m, n)$, \[m_{\lambda, \mu}(R(m, n)) = \rk \left( \left(\sum_{k = 1}^{N_{i, j}^{\lambda, \mu}} A_{i,j}^{(k)} \odot B_{i,j}^{(k)}\right)_{\substack{1 \leq i \leq p \\ 1 \leq j \leq q}}\right),\] where \[P_{\lambda, \mu}(x^{(j)}_i) = \sum_{k = 1}^{N_{i,j}^{\lambda, \mu}} A_{i,j}^{(k)} \otimes B_{i,j}^{(k)} \in \mcal{M}_{d_{\lambda}}(\bb{C}) \otimes \mcal{M}_{d_{\mu}}(\bb{C}).\] Moreover, we call the matrix $ \left(\sum_{k = 1}^{N_{i, j}^{\lambda, \mu}} A_{i,j}^{(k)} \odot B_{i,j}^{(k)}\right)_{\substack{1 \leq i \leq p \\ 1 \leq j \leq q}}$ the representation matrix of $R(m, n)$ for $(\lambda, \mu)$.
\end{prop}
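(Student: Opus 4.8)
The plan is to transcribe the proof of Proposition \ref{propCliftonoperad} almost verbatim, the only new ingredient being that the ambient simple algebra is now $\mcal{M}_{d_{\lambda}}(\bb{C}) \otimes \mcal{M}_{d_{\mu}}(\bb{C})$ rather than a single matrix algebra, and that one identifies it with $\mcal{M}_{d_{\lambda} d_{\mu}}(\bb{C})$ via the Kronecker product before invoking Lemma \ref{lemma1Clifton}.

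First I would set up the commutative square analogous to the one in the proof of Proposition \ref{propCliftonoperad}: the inclusion $R(m,n) \hookrightarrow \bb{C}[\bb{S}_m \times \bb{S}_n^{\op}]^{\oplus q}$ followed by $\varphi_{\lambda, \mu}^{\oplus q}$, together with the projection $p$ and the inclusion $i$ of the $(\lambda, \mu)$-isotypic component $(V_{\lambda} \boxtimes V_{\mu}^{\op})^{\oplus q d_{\lambda} d_{\mu}}$, on which the restriction of $\varphi_{\lambda,\mu}^{\oplus q}$ is the isomorphism $P_{\lambda,\mu}^{\oplus q}$. Since $P_{\lambda,\mu}^{\oplus q}$ is injective and intertwines the projection, one gets $\dim\big(\varphi_{\lambda, \mu}^{\oplus q}(R(m,n))\big) = \dim\big(p(R(m,n))\big) = m_{\lambda, \mu}(R(m,n))\, d_{\lambda} d_{\mu}$, so the whole problem reduces to computing the dimension of the image $\varphi_{\lambda, \mu}^{\oplus q}(R(m,n))$.

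For this, note that $\varphi$ is an isomorphism of algebras and $\mcal{M}_{d_{\lambda}}(\bb{C}) \otimes \mcal{M}_{d_{\mu}}(\bb{C})$ is a two-sided ideal summand of its target, so $\varphi_{\lambda, \mu}$ is a surjective algebra morphism; hence the image of the submodule generated by $x_1, \dots, x_p$ is the submodule of $(\mcal{M}_{d_{\lambda}}(\bb{C}) \otimes \mcal{M}_{d_{\mu}}(\bb{C}))^{\oplus q}$ generated, under coordinate-wise left multiplication, by $\varphi_{\lambda,\mu}^{\oplus q}(x_i) = \big(P_{\lambda,\mu}(x_i^{(1)}), \dots, P_{\lambda,\mu}(x_i^{(q)})\big)$. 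Now I would transport this along the Kronecker isomorphism $\mcal{M}_{d_{\lambda}}(\bb{C}) \otimes \mcal{M}_{d_{\mu}}(\bb{C}) \simeq \mcal{M}_{d_{\lambda} d_{\mu}}(\bb{C})$: by Property \ref{KroneckerProp}, left multiplication by $M \otimes N$ is carried to left multiplication by $M \odot N$, and $P_{\lambda,\mu}(x_i^{(j)}) = \sum_{k} A_{i,j}^{(k)} \otimes B_{i,j}^{(k)}$ is carried to $\sum_k A_{i,j}^{(k)} \odot B_{i,j}^{(k)} \in \mcal{M}_{d_{\lambda} d_{\mu}}(\bb{C})$. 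Thus $\varphi_{\lambda,\mu}^{\oplus q}(R(m,n))$ becomes exactly a sum $\mcal{M}_{d_{\lambda} d_{\mu}}(\bb{C}) \cdot (\dots) + \dots + \mcal{M}_{d_{\lambda} d_{\mu}}(\bb{C}) \cdot (\dots)$ of the type considered in Lemma \ref{lemma1Clifton}, applied with $n := d_{\lambda} d_{\mu}$ and with the matrices $A_{i,j} := \sum_k A_{i,j}^{(k)} \odot B_{i,j}^{(k)}$. The lemma yields $\dim\big(\varphi_{\lambda,\mu}^{\oplus q}(R(m,n))\big) = \rk\big((\sum_k A_{i,j}^{(k)} \odot B_{i,j}^{(k)})_{i,j}\big)\, d_{\lambda} d_{\mu}$; comparing with the value from the first step and cancelling $d_{\lambda} d_{\mu}$ gives the formula.

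The only genuinely delicate point, and the one I would spell out with care, is the compatibility of the module structure with the Kronecker product: one must verify that $\varphi_{\lambda,\mu}$ is a morphism of algebras onto the block (not merely a linear projection), so that generated submodules map to generated submodules, and that under $\mcal{M}_{d_{\lambda}}(\bb{C}) \otimes \mcal{M}_{d_{\mu}}(\bb{C}) \simeq \mcal{M}_{d_{\lambda} d_{\mu}}(\bb{C})$ the action of the full tensor-product algebra corresponds to the full left-multiplication action used in Lemma \ref{lemma1Clifton}. This is precisely what Property \ref{KroneckerProp} provides; once it is in place, the remainder is a line-by-line copy of the proof of Proposition \ref{propCliftonoperad}.
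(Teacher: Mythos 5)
Your proposal is correct and follows essentially the same route as the paper: the same commutative square reducing the multiplicity to $\dim\bigl(\varphi_{\lambda,\mu}^{\oplus q}(R(m,n))\bigr)$, the same passage from the tensor-product action to the Kronecker-product action (the paper isolates this as Lemma \ref{lemmaKronecker}, which you re-derive inline from Property \ref{KroneckerProp}), and the same final appeal to Lemma \ref{lemma1Clifton}.
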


\begin{rem} Again, we can consider $R(m, n)$ as a space of relations of a properad in given weight and biarity in order to compute multiplicities of a properad defined by generators and relations.
\end{rem}
    			
Let us first prove a lemma.
    			
\begin{lemme} \label{lemmaKronecker} Let $n, m, N_1, \dots, N_p \in \bb{N}$, and for $1 \leq i \leq p$ and $1 \leq k \leq N_i$, matrices $A_i^{(k)} \in \mcal{M}_n(\bb{C})$ and $B_i^{(k)} \in \mcal{M}_m(\bb{C})$. We have, as vector spaces, \begin{align*}&(\mcal{M}_n(\bb{C}) \otimes \mcal{M}_m(\bb{C})) \cdot \left(\sum_{k = 1}^{N_1} A_1^{(k)} \otimes B_1^{(k)}, \dots, \sum_{k = 1}^{N_p} A_p^{(k)} \otimes B_p^{(k)}\right) \\ &\simeq \mcal{M}_{nm}(\bb{C}) \cdot \left(\sum_{k = 1}^{N_1} A_1^{(k)} \odot B_1^{(k)}, \dots, \sum_{k = 1}^{N_p} A_p^{(k)} \odot B_p^{(k)}\right),\end{align*} where 
    			
{\small \begin{align*} 
&(\mcal{M}_n(\bb{C}) \otimes \mcal{M}_m(\bb{C})) \cdot \left(\sum_{k = 1}^{N_1} A_1^{(k)} \otimes B_1^{(k)}, \dots, \sum_{k = 1}^{N_p} A_p^{(k)} \otimes B_p^{(k)}\right) \\
&= \spn\left(\left(\sum_{k = 1}^{N_1} (MA_1^{(k)}) \otimes (NB_1^{(k)}), \dots, \sum_{k = 1}^{N_p} (MA_p^{(k)}) \otimes (NB_p^{(k)})\right), M \in \mcal{M}_n(\bb{C}), N \in \mcal{M}_m(\bb{C})\right).
\end{align*}}
\end{lemme}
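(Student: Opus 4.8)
The plan is to transport the claimed isomorphism across the Kronecker product, using Property~\ref{KroneckerProp} as the bridge. Write $C_i := \sum_{k=1}^{N_i} A_i^{(k)} \odot B_i^{(k)} \in \mcal{M}_{nm}(\bb{C})$ for $1 \leq i \leq p$. Since the map $\mcal{M}_n(\bb{C}) \otimes \mcal{M}_m(\bb{C}) \to \mcal{M}_{nm}(\bb{C})$, $A \otimes B \mapsto A \odot B$, is an isomorphism of vector spaces, so is the componentwise map $\odot^{\oplus p} \colon (\mcal{M}_n(\bb{C}) \otimes \mcal{M}_m(\bb{C}))^{\oplus p} \to \mcal{M}_{nm}(\bb{C})^{\oplus p}$. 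It therefore suffices to show that $\odot^{\oplus p}$ carries the left-hand subspace onto the right-hand one.

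First I would compute the image of a generator. For $M \in \mcal{M}_n(\bb{C})$ and $N \in \mcal{M}_m(\bb{C})$, Property~\ref{KroneckerProp} gives $(M A_i^{(k)}) \odot (N B_i^{(k)}) = (M \odot N)(A_i^{(k)} \odot B_i^{(k)})$, so summing over $k$,
\[
\odot^{\oplus p}\!\left(\sum_{k=1}^{N_1}(MA_1^{(k)}) \otimes (NB_1^{(k)}), \dots, \sum_{k=1}^{N_p}(MA_p^{(k)}) \otimes (NB_p^{(k)})\right) = \bigl((M\odot N)C_1, \dots, (M\odot N)C_p\bigr).
\]
Hence the image of the left-hand subspace is exactly $\spn\bigl\{\bigl((M\odot N)C_1, \dots, (M\odot N)C_p\bigr) : M \in \mcal{M}_n(\bb{C}),\, N \in \mcal{M}_m(\bb{C})\bigr\}$.

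It then remains to identify this span with $\mcal{M}_{nm}(\bb{C}) \cdot (C_1, \dots, C_p)$. The point is that although $M \odot N$ does not run through all of $\mcal{M}_{nm}(\bb{C})$, the matrices $M \odot N$ do \emph{span} it: elementary tensors $M \otimes N$ span $\mcal{M}_n(\bb{C}) \otimes \mcal{M}_m(\bb{C})$ and $\odot$ is surjective. Applying the linear map $P \mapsto (PC_1, \dots, PC_p)$, a spanning set of $\mcal{M}_{nm}(\bb{C})$ is sent to a spanning set of its image $\mcal{M}_{nm}(\bb{C}) \cdot (C_1, \dots, C_p)$, whence the two spans coincide. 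This finishes the argument, modulo the routine identification $\mcal{M}^{\oplus q} \simeq \mcal{M}_{n,qn}(\bb{C})$ already used in the proof of Lemma~\ref{lemma1Clifton}. The only subtlety — the ``main obstacle'', such as it is — is precisely this passage to spans: one cannot claim that $M\odot N$ ranges over all of $\mcal{M}_{nm}(\bb{C})$, but since both sides of the statement are defined as spans this causes no difficulty.
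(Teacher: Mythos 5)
Your proof is correct and follows essentially the same route as the paper's: apply the Kronecker isomorphism componentwise, use Property~\ref{KroneckerProp} to rewrite $(MA_i^{(k)})\odot(NB_i^{(k)})$ as $(M\odot N)(A_i^{(k)}\odot B_i^{(k)})$, and then pass from the span over the elementary Kronecker products $M\odot N$ to the span over all of $\mcal{M}_{nm}(\bb{C})$. The only difference is that you explicitly justify this last spanning step, which the paper's chain of equalities leaves implicit.
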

    			
\begin{proof}
{\small \begin{align*}
&(\mcal{M}_n(\bb{C}) \otimes \mcal{M}_m(\bb{C})) \cdot \left(\sum_{k = 1}^{N_1} A_1^{(k)} \otimes B_1^{(k)}, \dots, \sum_{k = 1}^{N_p} A_p^{(k)} \otimes B_p^{(k)}\right) \\
&= \spn\left(\left(\sum_{k = 1}^{N_1} (MA_1^{(k)}) \otimes (NB_1^{(k)}), \dots, \sum_{k = 1}^{N_p} (MA_p^{(k)}) \otimes (NB_p^{(k)})\right), M \in \mcal{M}_n(\bb{C}), N \in \mcal{M}_m(\bb{C})\right) \\
&\simeq \spn\left(\left(\sum_{k = 1}^{N_1} (MA_1^{(k)}) \odot (NB_1^{(k)}), \dots, \sum_{k = 1}^{N_p} (MA_p^{(k)}) \odot (NB_p^{(k)})\right), M \in \mcal{M}_n(\bb{C}), N \in \mcal{M}_m(\bb{C})\right) \\
&= \spn\left(\left(\sum_{k = 1}^{N_1} (M \odot N)(A_1^{(k)} \odot B_1^{(k)}), \dots, \sum_{k = 1}^{N_p} (M \odot N)(A_p^{(k)} \odot B_p^{(k)})\right), M \in \mcal{M}_n(\bb{C}), N \in \mcal{M}_m(\bb{C})\right) \\
&= \spn\left(\left(\sum_{k = 1}^{N_1} M(A_1^{(k)} \odot B_1^{(k)}), \dots, \sum_{k = 1}^{N_p} M(A_p^{(k)} \odot B_p^{(k)})\right), M \in \mcal{M}_{nm}(\bb{C})\right) \\
&= \mcal{M}_{nm}(\bb{C}) \cdot \left(\sum_{k = 1}^{N_1} A_1^{(k)} \odot B_1^{(k)}, \dots, \sum_{k = 1}^{N_1} A_p^{(k)} \odot B_p^{(k)}\right).
\end{align*}}
\end{proof}
    			
\begin{rem} Property \ref{KroneckerProp} and Lemma \ref{lemmaKronecker} can also be proved by the fact that the Kronecker product corresponds to the tensor product of endomorphisms. 
\end{rem}
    			
\begin{proof}[Proof of Proposition \ref{propCliftonproperad}] We have the commutative diagram
    			
\[\xymatrix{R(m, n) \ar@{^{(}->}[r] & \bb{C}[\bb{S}_m \times \bb{S}_n^{\op}]^{\oplus q} \ar[r]^{\varphi_{\lambda, \mu}^{\oplus q}} \ar@<-2pt>[d]_p & (\mcal{M}_{d_{\lambda}}(\bb{C}) \otimes \mcal{M}_{d_{\mu}}(\bb{C}))^{\oplus q} \\ & (V_{\lambda} \boxtimes V_{\mu})^{\oplus qd_{\lambda}d_{\mu}} \ar@<-2pt>[u]_i \ar[ur]^{P_{\lambda, \mu}^{\oplus q}} & }\] Thus $\dim(\varphi_{\lambda, \mu}(R(m, n))) = m_{\lambda, \mu}(R(m, n))d_{\lambda}d_{\mu}$.
    			
Now because $R(m, n)$ is generated as an $\bb{S}_m \times \bb{S}_n^{\op}$ module by $x_1, \dots, x_p$ and $\varphi$ is a morphism of algebras, $\varphi_{\lambda, \mu}^{\oplus q}(R(m, n))$ is generated by $\varphi_{\lambda, \mu}^{\oplus q}(x_1), \dots, \varphi_{\lambda, \mu}^{\oplus q}(x_p)$. But the action of $\bb{S}_m \times \bb{S}_n^{\op}$ on $(\mcal{M}_{d_{\lambda}}(\bb{C}) \otimes \mcal{M}_{d_{\mu}}(\bb{C}))^{\oplus q}$ is given by $\varphi^{\oplus q}$ and the diagonal action on $\bb{C}[\bb{S}_m \times \bb{S}_n^{\op}]^{\oplus q}$, and $\varphi_{\lambda, \mu}$ is a surjection, thus we have 
    			
\[\varphi_{\lambda, \mu}^{\oplus q}(R(m, n)) = \sum_{i = 1}^p(\mcal{M}_{d_{\lambda}}(\bb{C}) \otimes \mcal{M}_{d_{\mu}}(\bb{C})) \cdot (A_{i, 1}^{(k)} \otimes B_{i, 1}^{(k)}, \dots, A_{i, q}^{(k)} \otimes B_{i, q}^{(k)}).\]
    			
By Lemma \ref{lemmaKronecker}, we have 
    			
\[\varphi_{\lambda, \mu}^{\oplus q}(R(m, n)) \simeq \sum_{i = 1}^p \mcal{M}_{d_{\lambda}d_{\mu}}(\bb{C}) \cdot (A_{i, 1}^{(k)} \odot B_{i, 1}^{(k)}, \dots, A_{i, q}^{(k)} \odot B_{i, q}^{(k)}).\] And finally, by Lemma \ref{lemma1Clifton}, the dimension of this space is \[\rk\left(\left(\sum_{k = 1}^{N_{i, j}^{\lambda, \mu}} A_{i, j}^{(k)} \odot B_{i, j}^{(k)}\right)_{i, j}\right) d_{\lambda}d_{\mu},\] which proves Proposition \ref{propCliftonproperad}.
\end{proof}

\section{A family of associative and coassociative properads}

Here we take a very natural family of properads to consider : the one given by an associative product $\mu$, a coassociative coproduct $\Delta$ and one parametrized quadratic relation that corresponds to the rewriting of $\Delta\circ\mu$. The goal is to determine conditions of the parameters for the properads of this family to be non-Koszul. The method we develop in this paper also works for other families of the same type. 

\subsection{Presentation of the family}

We will study the following family of properads. Let $E$ be the $\mathbb{S}$-bimodule \[E = \produit{}{}{} \oplus \coproduit{}{}{},\] $a = (a_1, a_2, a_3, a_4)$ be a $4$-tuple in $\bb{C}^4$, and $R_a$ be the ideal generated by 
			
\begin{align*}
& \peignegauche{}{}{}{} - \peignedroite{}{}{}{} \oplus \copeignegauche{}{}{}{} - \copeignedroite{}{}{}{} \\
&\oplus \jesus{}{}{}{} - a_1 \dromadroite{}{}{}{} - a_2 \dromagauche{}{}{}{} - a_3 \poissongauche{}{}{}{} - a_4 \poissondroite{}{}{}{}.
\end{align*}
        
We denote by $\between_a$ the term \[\between_a := \jesus{}{}{}{} - a_1 \dromadroite{}{}{}{} - a_2 \dromagauche{}{}{}{} - a_3 \poissongauche{}{}{}{} - a_4 \poissondroite{}{}{}{}.\] This notation is inspired from J-L. Loday in \cite{Lo}. Then we denote by $\mcal{P}_a$ the properad $\mcal{F}(E)/R_a$.

Let $\mcal{A}$ be the properad (or operad) of associative algebras and $\mcal{C}$ be the properad of coassociative coalgebras. Let us define the morphism of $\mathbb{S}$-bimodules \[\fonction{\varphi_a}{\mcal{A} \boxtimes \mcal{C}}{\mcal{P}_a}\] defined by the composition \[\mcal{A} \boxtimes \mcal{C} \hookrightarrow \mcal{F}(E) \boxtimes \mcal{F}(E) \rightarrow \mcal{F}(E) \twoheadrightarrow \mcal{P}_a.\]
        		
We can state the following conjecture, which links $\varphi_a$ to Koszulness of $\mcal{P}_a$ and confluence of the system it induces (see Definition \ref{DefConfluence}).
        		
\begin{conj} \label{Conjecture} Let $a \in \bb{C}^4$, the following are equivalent :
        			
\begin{enumerate}
\item the properad $\mcal{P}_a$ induces a confluent system,
\item the morphism $\varphi_a$ is a bijection in weight $3$,
\item the properad $\mcal{P}_a$ is Koszul.
\end{enumerate}
        		
\end{conj}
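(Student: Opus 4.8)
The plan is to prove the three implications $(1) \Rightarrow (2) \Rightarrow (3) \Rightarrow (1)$, though in practice the conjecture likely splits into one easy chain of implications and one hard one. First I would set up the rewriting system $(3) \Leftrightarrow \text{confluence}$ side. The properad $\mcal{P}_a$ comes with an obvious orientation of its relations: $\peignegauche{}{}{}{} \leadsto \peignedroite{}{}{}{}$ (reduce to right comb), $\copeignegauche{}{}{}{} \leadsto \copeignedroite{}{}{}{}$, and the compatibility relation oriented so that $\jesus{}{}{}{}$ (the ``wrong way'' graph $\Delta \circ \mu$) rewrites to the linear combination $a_1 \dromadroite{}{}{}{} + a_2 \dromagauche{}{}{}{} + a_3 \poissongauche{}{}{}{} + a_4 \poissondroite{}{}{}{}$ of ``right way'' graphs. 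The implication $(1) \Rightarrow (3)$ should then be a direct application of properadic rewriting theory (the properadic Diamond Lemma / Theorem \ref{TheoReplacement}): if the system is confluent then the normal-form monomials (right combs of products composed above right combs of coproducts, i.e. exactly the monomials of $\mcal{A} \boxtimes \mcal{C}$) form a basis of $\mcal{P}_a$, and confluence forces $\lambda$ (sending $W \boxtimes_{(1)} V \to V \boxtimes_{(1)} W$, i.e. $\Delta\mu \mapsto \sum a_i(\dots)$) to be a replacement rule, so $\mcal{P}_a \cong \mcal{A} \boxtimes \mcal{C}$ is Koszul by \cite[Proposition 8.4]{Va} since $\mcal{A}$ and $\mcal{C}^{\op}$ are Koszul operads. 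Along the way this also gives $(1) \Rightarrow (2)$: confluence means the overlap ambiguities in weight $3$ resolve, which is precisely the statement that $\varphi_a$ is injective (no hidden relations among normal forms) and surjective (every weight-$3$ element reduces to a normal form) in weight $3$.

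Next I would attack $(2) \Rightarrow (1)$, which I expect to be the genuine content. The point is that confluence of the whole infinite rewriting system is governed by finitely many critical pairs (overlapping reducible monomials), and in a quadratic properad every such critical pair lives in weight $3$. One should enumerate the critical monomials: the ``cubical'' overlaps $\mu(\mu,\mu)$ and $(\mu,\mu,\mu)$-type associativity pentagons for products and coproducts (which resolve automatically because $\mcal{A}$ and $\mcal{C}$ are Koszul / their own rewriting systems are confluent), and — crucially — the mixed overlaps where a $\Delta\mu$-pattern sits next to another $\mu$ or $\Delta$. These mixed critical pairs are exactly the weight-$3$ graphs in which $\varphi_a$ could fail to be bijective. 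So confluence is equivalent to: for each mixed critical monomial $\Gamma$ of weight $3$, the two ways of reducing $\Gamma$ (first resolve the top overlap, then the bottom, versus the other order) agree in $\mcal{P}_a$; and the claim is that this system of polynomial equations in $(a_1, a_2, a_3, a_4)$ is equivalent to $\varphi_a$ being bijective in weight $3$. The forward direction $(1) \Rightarrow (2)$ is the easy half (confluence gives normal forms, hence bijectivity); the converse requires showing that bijectivity of $\varphi_a$ in weight $3$ forces \emph{every} critical pair to resolve, i.e. there are no ``accidental'' coincidences of dimension that hide a genuine confluence failure.

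The main obstacle will be this last step: one must check that if $\varphi_a$ is a bijection in weight $3$ then no critical pair can fail. The risk is a properad $\mcal{P}_a$ where $\varphi_a$ happens to be bijective in weight $3$ by a dimension count but where some critical pair still diverges — which would make the rewriting system non-confluent even though weights $3$ looks fine, breaking $(2) \Rightarrow (1)$; and symmetrically one worries whether weight-$3$ bijectivity of $\varphi_a$ really captures Koszulness, since $\mcal{P}_a \cong \mcal{A}\boxtimes\mcal{C}$ is an iff for the replacement-rule criterion but a priori only a sufficient condition for Koszulness. Concretely I would: (i) compute, using the \texttt{Sagemath} basis generator of \cite{NeWeb}, explicit bases of $\mcal{F}(E)(m,n)$ and of $\mcal{A}\boxtimes\mcal{C}(m,n)$ in all biarities of weight $3$; (ii) write down the matrix of $\varphi_a$ in each such biarity and record the polynomial conditions on $a$ for it to be invertible; (iii) independently write down, from the $\bb{S}$-bimodule presentation, the defining relations of $\mcal{P}_a$ in weight $3$ and hence $\dim \mcal{P}_a(m,n)$ as a function of $a$ (via a representation-matrix rank computation as in Proposition \ref{propCliftonproperad}); (iv) match the two and show the resulting variety in $\bb{C}^4$ coincides with the confluence locus obtained from resolving all mixed critical pairs, and then invoke Theorem \ref{TheoReplacement} to close the loop with Koszulness. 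If these three loci fail to coincide on the nose, the conjecture is exactly that they do, and the obstruction is a finite but potentially delicate algebraic-geometry verification in the four parameters.
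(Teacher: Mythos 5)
First, note that the statement you are proving is stated in the paper as a \emph{conjecture}, and the paper explicitly says its methods ``have not yet decided whether this conjecture is true or not.'' There is therefore no complete proof in the paper to match yours against; the paper only establishes the equivalence (i) $\Leftrightarrow$ (ii) (Theorem \ref{TheoPartial}, by an explicit representation-matrix computation of the multiplicities of $R_a^{(3)}(2,3)$ and a comparison with $(\mcal{A}\boxtimes\mcal{C})^{(3)}(2,3)$), the implication (ii) $\Rightarrow$ (iii) (via Theorem \ref{TheoReplacement}), and the remaining implication only for certain subfamilies of parameters (Theorem \ref{TheoGen}). Your treatment of (i) $\Rightarrow$ (iii), (i) $\Rightarrow$ (ii) and (ii) $\Rightarrow$ (i) is sound and essentially coincides with the paper's: confluence is controlled by the two mixed critical monomials in weight $3$, resolving them forces $a$ into the finite set $C \cap \tilde{C}$, and for those parameters $\lambda$ is a replacement rule so $\mcal{P}_a \simeq \mcal{A}\boxtimes\mcal{C}$ is Koszul.

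The genuine gap is the implication (iii) $\Rightarrow$ (i): that a \emph{non-confluent} $\mcal{P}_a$ is \emph{not Koszul}. Your proposed cycle $(1)\Rightarrow(2)\Rightarrow(3)\Rightarrow(1)$ requires this, but none of your concrete steps (i)--(iv) produce an argument for it --- they all live in weight $3$ and compute the confluence locus and the bijectivity locus of $\varphi_a$, which only re-proves (i) $\Leftrightarrow$ (ii). You correctly flag the worry that the replacement-rule criterion is merely sufficient for Koszulness (your own counterexample-in-spirit, the algebra $T(x,y,z)/(x^2-yx,\,xz,\,zy)$, is quoted in the paper for exactly this reason), but flagging the worry is not an argument. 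The paper's entire Section \ref{SecNonKoszulness} is devoted to this direction and uses a completely different tool: for $a \notin C$ one computes the multiplicities of every term of the Koszul complex $(\mcal{P}_a \boxtimes \mcal{P}_a^{\text{\textexclamdown}})^{(4)}(2,4)$ in weight $4$ and biarity $(2,4)$, and shows that the Euler characteristic of some isotypic component $[\,\cdot\,]_{(\lambda,\mu)}$ is nonzero, which obstructs acyclicity and hence Koszulness. Even with that machinery the paper only settles the conjecture on the subfamilies of Theorem \ref{TheoGen}; the general case remains open. So your proposal, as written, cannot close the loop, and the statement it targets is in fact still unproven.
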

        		
In fact, we already know that for an operad $\mcal{P}$, if it induces a confluent system (see \cite[Chapter 8]{LoVa}), it is Koszul. But an algebra (thus an operad) can be Koszul even though the system it induces is non confluent, for example the algebra \[A = T(x, y, z)/(x^2 - yx, xz, zy)\] induces a confluent system for the order giving the rule $x^2 \rightarrow yx$ but not for the one giving the rule $yx \rightarrow x^2$ (see \cite{DoRo}). Moreover, (ii) $\Rightarrow$ (iii) has been proven by B. Vallette in \cite{Va} (see Theorem \ref{TheoReplacement}) for properads in general. Unfortunately, the method we develop in this paper has not yet decided rather this conjecture is true or not, but we have some results like the following one and similar ones. 

\begin{theo}
For $a = (a_1, 0, a_3, 0)$, the following are equivalent :
\begin{enumerate}
\item the properad $\mcal{P}_a$ is Koszul,
\item the relation $\between_a$ is one of the following, up to isomorphism : \[\jesus{}{}{}{}\text{ or }\jesus{}{}{}{} - \dromadroite{}{}{}{}.\]
\end{enumerate}
\end{theo}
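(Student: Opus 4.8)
The plan is to prove the two implications separately, after first reducing the parameter space modulo the isomorphisms internal to the family. The only linear automorphisms of $E=\produit{}{}{}\oplus\coproduit{}{}{}$ are the rescalings of the product and of the coproduct, and since each of $\jesus{}{}{}{}$, $\dromadroite{}{}{}{}$, $\dromagauche{}{}{}{}$, $\poissongauche{}{}{}{}$, $\poissondroite{}{}{}{}$ contains exactly one product and one coproduct, such a rescaling merely multiplies $\between_a$ by a nonzero scalar and leaves $a$ unchanged; the remaining isomorphisms are the left--right flips of $\produit{}{}{}$ and of $\coproduit{}{}{}$, which act on $(a_1,a_2,a_3,a_4)$ through a finite group of coordinate permutations. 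Hence, up to isomorphism, $\between_a$ equals $\jesus{}{}{}{}$ or $\jesus{}{}{}{}-\dromadroite{}{}{}{}$ exactly when $a$ is one of the finitely many images of $(0,0,0,0)$ or of $(1,0,0,0)$ under this group; intersecting with the slice $a_2=a_4=0$, condition (ii) is equivalent to $(a_1,a_3)\in S$ for an explicit finite set $S\subset\bb{C}^2$ containing $(0,0)$ and $(1,0)$. It therefore suffices to establish Koszulness for the parameters in $S$ and non-Koszulness for every $(a_1,0,a_3,0)$ with $(a_1,a_3)\notin S$.

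For $(ii)\Rightarrow(i)$ I would apply the replacement-rule machinery of Theorem \ref{TheoReplacement}, taking $V=\produit{}{}{}$ with $R$ associativity and $W=\coproduit{}{}{}$ with $S$ coassociativity, so that $\mcal{A}=\mcal{F}(V)/(R)$ and $\mcal{B}=\mcal{C}=\mcal{F}(W)/(S)$ are the Koszul properads of associative algebras and of coassociative coalgebras. In biarity $(2,2)$ one has $W\boxtimes_{(1)}V=\spn(\jesus{}{}{}{})$ and $V\boxtimes_{(1)}W=\spn(\dromadroite{}{}{}{},\dromagauche{}{}{}{},\poissongauche{}{}{}{},\poissondroite{}{}{}{})$, so $\lambda\colon\jesus{}{}{}{}\mapsto a_1\dromadroite{}{}{}{}+a_3\poissongauche{}{}{}{}$ is a morphism of $\bb{S}$-bimodules whose graph is precisely $R_a$. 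It remains to verify, for $a=(0,0,0,0)$ and for $a=(1,0,0,0)$, that $\lambda$ is a replacement rule, i.e.\ that $\underbrace{\mcal{A}}_{1}\boxtimes\underbrace{\mcal{C}}_{2}\to\mcal{P}_a$ and $\underbrace{\mcal{A}}_{2}\boxtimes\underbrace{\mcal{C}}_{1}\to\mcal{P}_a$ are injective; equivalently, that $\mcal{P}_a$ induces a confluent system in the sense of Definition \ref{DefConfluence}. The only nontrivial critical pairs involving $\between_a$ are those of its overlaps with associativity (a biarity-$(2,3)$ monomial) and with coassociativity (a biarity-$(3,2)$ monomial), and a short computation resolves them: trivially when $\between_a=\jesus{}{}{}{}$, since both branches rewrite to $0$, and by a direct check when $\between_a=\jesus{}{}{}{}-\dromadroite{}{}{}{}$. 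Theorem \ref{TheoReplacement} then gives that $\mcal{P}_a$ is Koszul (and that $\varphi_a$ is an isomorphism).

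For $(i)\Rightarrow(ii)$ I would argue by contraposition via the Euler-characteristic obstruction of Theorem \ref{TheoGen} in weight $4$ and biarity $(2,4)$. For $(a_1,a_3)\notin S$, the plan is to compute, for each pair $(\lambda,\mu)\vdash(2,4)$, the Euler characteristic $\sum_{k=0}^{4}(-1)^k\dim\bigl([\underbrace{\mcal{P}_a}_{k}\boxtimes\underbrace{\mcal{P}_a^{\text{\textexclamdown}}}_{4-k}]_{\lambda,\mu}\bigr)$ of the corresponding graded piece of the Koszul complex. The isotypic multiplicities of $\mcal{P}_a$ in the relevant low weights and biarities are obtained from the \texttt{Sagemath} script of \cite{NeWeb} (which outputs a basis of the free properad and a generating set of $R_a$) together with the representation-matrix method of Proposition \ref{propCliftonproperad}; those of $\mcal{P}_a^{\text{\textexclamdown}}$ are deduced from the Koszul dual $\mcal{P}_a^{\text{!}}$ --- whose quadratic relations are the orthogonal complement $R_a^{\perp}$, again a parametrized family --- via Proposition \ref{PropDual}; and the multiplicities of the composition products assembling the complex follow from these using Theorem \ref{TheoComposBlocks} and the Pieri rules (Corollaries \ref{coroPieri1} and \ref{coroPieri2}). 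These dimensions are locally constant in $(a_1,a_3)$ away from an explicit finite union of subvarieties, and one checks that the resulting Euler characteristic vanishes on $S$ (as it must, by Koszulness) and that on every other stratum some isotypic component $(\lambda,\mu)\vdash(2,4)$ has nonzero Euler characteristic, whence the Koszul complex of $\mcal{P}_a$ is not acyclic and $\mcal{P}_a$ is not Koszul.

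The main obstacle is this last step: one must locate \emph{every} stratum of the slice $\{a_2=a_4=0\}$ on which the relevant multiplicities of $\mcal{P}_a$ or of $\mcal{P}_a^{\text{!}}$ jump, carry out the composition-product bookkeeping with $\mcal{P}_a^{\text{\textexclamdown}}$ (which, unlike $\mcal{P}_a$, cannot be read off a rewriting normal form and must be accessed through $\mcal{P}_a^{\text{!}}$ and Proposition \ref{PropDual}), and then verify the arithmetic claim that the alternating sum is nonzero on each stratum outside $S$. A further subtlety is that the weight-$4$, biarity-$(2,4)$ obstruction has to be \emph{sufficient} on this particular slice: one must rule out parameters there with vanishing Euler characteristic that are nonetheless non-Koszul, and any such value would have to be handled separately --- by examining the differential of the Koszul complex or by passing to another biarity, as anticipated in the introduction and in Conjecture \ref{Conjecture}.
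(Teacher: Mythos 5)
Your plan follows the paper's own route almost exactly: Koszulness for the confluent parameters via the replacement rule (Theorem \ref{TheoReplacement}), and non-Koszulness for the remaining $(a_1,0,a_3,0)$ via the Euler characteristic of isotypic components of the Koszul complex in weight $4$ and biarity $(2,4)$, assembled from Theorem \ref{TheoComposBlocks}, the Pieri rules, Proposition \ref{PropDual}, and Gröbner bases of determinantal ideals of the representation matrices restricted to the slice $a_2=a_4=0$ (the paper in fact only needs the single component $(\lambda,\mu)=((1,1),(1,1,1,1))$). The computational obstacle and the sufficiency subtlety you flag at the end are exactly what the paper's \texttt{SageMath} computation resolves on this slice, so the proposal is correct in approach.
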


Most of the time, to compute dimensions or multiplicities, we will study $\mcal{P}_a$ in some weight and biarity. But because the $\bb{S}$-bimodule $E$ is stable by horizontal symmetry, so is $\mcal{F}(E)$ . Thus we can wonder if we can save computations. The relations of associativity and coassociativity are also horizontally symmetric to each other, and the horizontal symmetry of the relation $\between_a$ is $\between_{\tilde{a}}$ with $\tilde{a} := (a_2, a_1, a_3, a_4)$. Thus if we look at the reversed properad $\mcal{P}_a^{\op}$ (see \cite[Section 8]{Va}), we get the properad $\mcal{P}_{\tilde{a}}$.
				
This remark will save a lot of computation for the next sections, because, for example, if we look at the dimension, depending on $a$, of $\mcal{P}_a$ in some weight $w$ and biarity $(m, n)$, we have, as $\bb{S}_n \times \bb{S}_m^{\op}$-modules : \[\mcal{P}^{(w)}_a(n, m) \simeq (\mcal{P}_a^{\op})^{(w)}(m, n) \simeq \left(\mcal{P}^{(w)}_{\tilde{a}}(m, n)\right)^{\op}.\]

\subsection{Confluence}
\label{SubsectionConfluence}

Here we state the conditions on $a$ for $\mcal{P}_a$ to induce a confluent system in the sense given below. Thus we can reformulate the conjecture with these conditions. 
			
\begin{defin}[{\cite[Chapter 1]{Mal}}] \label{DefConfluence} A rewriting system is the data $(A, \rightarrow)$ of a set $A$ and a binary relation $\rightarrow$ on $A$ called the rewriting relation. We say, for $a, b \in A$, that $a$ rewrites to $b$ if there exists a sequence in $A$ \[a \rightarrow a_1 \rightarrow \dots \rightarrow a_n \rightarrow b,\] and we write $a \twoheadrightarrow b$. We say that the system $(A, \rightarrow)$ is confluent if for any tuple $(a, b, c)$ in $A$ such that $b \twoheadleftarrow a \twoheadrightarrow c$, there exists $d \in A$ such that $b \twoheadrightarrow d \twoheadleftarrow c$. See \cite{Mal} for more details on rewriting systems, confluence and linear rewriting. 
\end{defin}
			
We take as rewriting rules on the space $\mcal{F}^{(2)}(E)$ the following ones
        
\begin{align*}
&\peignegauche{}{}{}{} \rightarrow \peignedroite{}{}{}{},  \copeignegauche{}{}{}{} \rightarrow \copeignedroite{}{}{}{} \\
&\text{ and }\jesus{}{}{}{} \rightarrow a_1 \dromadroite{}{}{}{} + a_2 \dromagauche{}{}{}{} + a_3 \poissongauche{}{}{}{} + a_4 \poissondroite{}{}{}{}
\end{align*} and take the induced ones on $\mcal{F}^{(3)}(E)$.
        		
\begin{defin} Let $a \in \bb{C}^4$. We say that $\mcal{P}_a$ \textit{induces a confluent system} (in weight $3$) if the rewriting system $\mcal{F}^{(3)}(E)$ with the rewriting relation given above is confluent.
\end{defin}
        		
\begin{rem} This system terminates, thus here confluence is equivalent to convergence (See \cite{Mal}). 
\end{rem}
        
The critical monomials, which are the elements that can be rewritten two different ways, of this rewriting system are \[\longpeignegauche{}{}{}{}{}, \longcopeignegauche{}{}{}{}{}, \deuxtroispremier{}{}{}{}{}\text{ and }\troisdeuxdeuxieme{}{}{}{}{}.\] We already know that starting from \[\longpeignegauche{}{}{}{}{}\text{ and }\longcopeignegauche{}{}{}{}{},\] we have confluence, these are known operadic cases. We only need to check what happens for \[\deuxtroispremier{1}{2}{1}{2}{3}\text{ and }\troisdeuxdeuxieme{1}{2}{1}{2}{3},\] but we will focus on the first one and only give the relations for the other one, because it is obtained from the first one by applying the horizontal symmetry. First we will give some examples.
        		
\begin{exs} For $a = (0, 0, 0, 0)$, we have the rewriting diagram in Figure \ref{a0000},where the top arrow is given by associativity, and the two others are given by the relation $\between_a$. Thus $\mcal{P}_0$ induces a confluent system, but for $a = (2, 0, 0, 0)$, we have the rewriting diagram in Figure \ref{a2000}, where the top path is given by associativity then the relation $\between_a$, and the bottom path is given by the relation $\between_a$ two times followed by associativity. Thus is this case, $\mcal{P}_a$ induces a non confluent system because the two elements at the end of each path cannot be rewritten anymore and are different.
        		
\begin{figure}[!h]
\caption{Rewriting diagram for $a = (0, 0, 0, 0)$}
\label{a0000}
\centering
\[\xymatrix{\deuxtroispremier{1}{2}{1}{2}{3} \ar[rr] \ar[dr] & & \deuxtroisdeuxieme{1}{2}{1}{2}{3} \ar[dl] \\ & 0 & }\]
\end{figure}
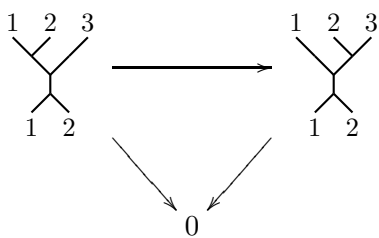  
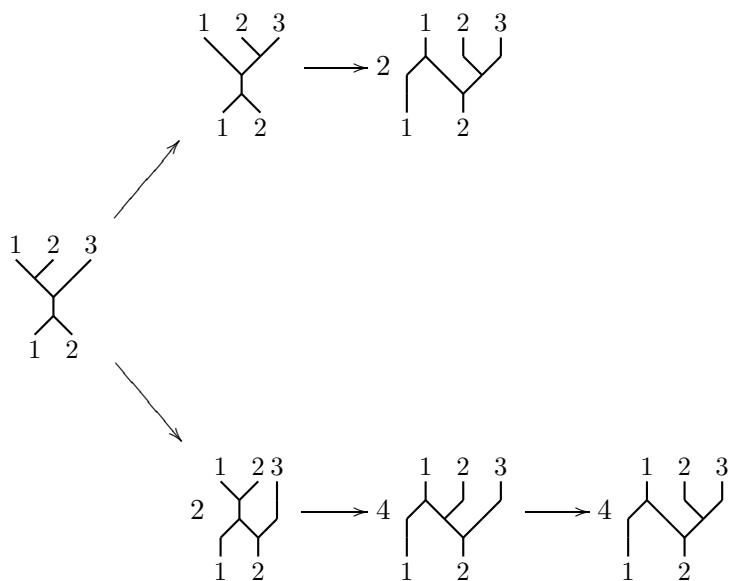
\begin{figure}[!h]
\caption{Rewriting diagram for $a = (2, 0, 0, 0)$} 
\label{a2000}
\centering
\[\xymatrix{ & \deuxtroisdeuxieme{1}{2}{1}{2}{3} \ar[r] & 2\deuxtroisseptieme{1}{2}{1}{2}{3} & \\ \deuxtroispremier{1}{2}{1}{2}{3} \ar[ur] \ar[dr] & & & \\ & 2\deuxtroistroisieme{1}{2}{1}{2}{3} \ar[r] & 4\deuxtroistreizieme{1}{2}{1}{2}{3} \ar[r] & 4\deuxtroisseptieme{1}{2}{1}{2}{3}}\]
\end{figure}
\end{exs}
        		
\begin{prop} Let $a = (a_1, a_2, a_3, a_4) \in \bb{C}^4$ and $\tilde{a} := (a_2, a_1, a_3, a_4)$. The properad $\mcal{P}_a$ induces a confluent system if and only if \[a \in C := \{a \in \bb{C}^4 | \forall i \in \{1, 2, 3, 4\}, a_i = a_i^2 \text{ and } a_2a_4 = a_1a_3 = 0\}\text{ and } \tilde{a} \in C.\] This means that $\mcal{P}_a$ induces a confluent system if and only if for all $i \in \{1, 2, 3, 4\}$, $a_i = a_i^2$ and $a_1a_3 = a_1a_4 = a_2a_3 = a_2a_4 = 0$.
\end{prop}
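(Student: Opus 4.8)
The plan is to run the critical-monomial criterion for confluence of the linear rewriting system on $\mcal{F}^{(3)}(E)$. Since this system terminates, local confluence is equivalent to confluence, so (see \cite{Mal}) $\mcal{P}_a$ induces a confluent system if and only if each of its critical monomials is joinable; as recalled above these are $\longpeignegauche{}{}{}{}{}$, $\longcopeignegauche{}{}{}{}{}$, $\deuxtroispremier{}{}{}{}{}$ and $\troisdeuxdeuxieme{}{}{}{}{}$. The first two only involve the associativity rule on $\produit{}{}{}$, respectively the coassociativity rule on $\coproduit{}{}{}$, which form the classical convergent rewriting presentation of the (co)associative (co)operad (see \cite[Chapter 8]{LoVa}); they are therefore joinable for every $a \in \bb{C}^4$. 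Hence everything reduces to the joinability of $\deuxtroispremier{}{}{}{}{}$ and of $\troisdeuxdeuxieme{}{}{}{}{}$.

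I would first treat $\deuxtroispremier{}{}{}{}{}$, the composite of $\coproduit{}{}{}$ with the left-bracketed triple product: it carries a genuine overlap of the associativity rule, applied to the two products, with the $\between_a$ rule, applied to the $\jesus{}{}{}{}$ subpattern formed by the coproduct together with the product feeding it. Along the first branch one rewrites by associativity and then applies $\between_a$ to the resulting $\jesus{}{}{}{}$ pattern; along the second branch one applies $\between_a$ first, producing the four summands $a_1\,\dromadroite{}{}{}{}$, $a_2\,\dromagauche{}{}{}{}$, $a_3\,\poissongauche{}{}{}{}$, $a_4\,\poissondroite{}{}{}{}$, each precomposed with a product. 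In each branch one then keeps applying the three rules --- new $\jesus{}{}{}{}$ and new (co)associativity patterns appear after every step --- and, by termination, reaches a normal form: a linear combination, with coefficients polynomial in $a_1,a_2,a_3,a_4$, of the irreducible monomials of that biarity, for which the \texttt{Sagemath} script of \cite{NeWeb} supplies a basis and performs the bookkeeping. The decisive step is to equate the two normal forms and read off the system of polynomial identities in the $a_i$ this forces; I expect it to simplify to exactly $a_i = a_i^2$ for $i\in\{1,2,3,4\}$ together with $a_1a_3 = a_2a_4 = 0$, i.e. to $a \in C$.

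It then remains to handle $\troisdeuxdeuxieme{}{}{}{}{}$, and for this no new computation is needed: this critical monomial is the image of $\deuxtroispremier{}{}{}{}{}$ under the symmetry of $\mcal{F}(E)$ recalled above, which exchanges the associativity and coassociativity rules and sends $\between_a$ to $\between_{\tilde{a}}$ with $\tilde{a} = (a_2,a_1,a_3,a_4)$ --- equivalently, it corresponds to passing to $\mcal{P}_a^{\op} \cong \mcal{P}_{\tilde{a}}$. Since this symmetry is an isomorphism of rewriting systems, $\troisdeuxdeuxieme{}{}{}{}{}$ is joinable if and only if $\tilde{a} \in C$, i.e. $a_i = a_i^2$ and $a_2a_3 = a_1a_4 = 0$. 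Combining the two criteria, $\mcal{P}_a$ induces a confluent system if and only if $a \in C$ and $\tilde{a} \in C$, which is precisely the stated condition that $a_i = a_i^2$ for all $i$ and $a_1a_3 = a_1a_4 = a_2a_3 = a_2a_4 = 0$. The main obstacle is the bookkeeping inside the two branches of the critical pair $\deuxtroispremier{}{}{}{}{}$: since $\between_a$ has four terms and each reduction can create fresh redexes, a naive expansion explodes, so one must fix a systematic reduction strategy (exploiting the termination order) to produce the normal forms in a controlled way and keep the coefficient comparison tractable.
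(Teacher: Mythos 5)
Your proposal is correct and follows essentially the same route as the paper: reduce to the four critical monomials, dismiss the two purely (co)operadic ones as classically confluent, compare the two normal forms of the mixed critical monomial to extract the condition $a \in C$, and transfer to the other mixed critical monomial via the horizontal symmetry $\mcal{P}_a^{\op} \simeq \mcal{P}_{\tilde{a}}$, which swaps $a_1$ and $a_2$. The only difference is that the paper actually carries out the expansion you defer, exhibiting the difference of the two normal forms explicitly as a combination whose coefficients are $a_i - a_i^2$, $a_2a_4$ and $a_1a_3$.
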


\begin{rem} Observe that these equations leave the solutions 

\[a = (0, 0, 0, 0), (1, 0, 0, 0), (0, 1, 0, 0), (0, 0, 1, 0), (0, 0, 0, 1), (1, 1, 0, 0) \text{ or }(0, 0, 1, 1)\]
\end{rem}
        		
\begin{proof}
        		
Constructing the rewriting diagram for \[\deuxtroispremier{1}{2}{1}{2}{3}\]\ in the general case, at the end of one path, we get :

\begin{align*}
&a_1\deuxtroisseptieme{1}{2}{1}{2}{3} + a_2\left(a_1\deuxtroisdixneuvieme{1}{2}{1}{2}{3} + a_2\deuxtroisonzieme{1}{2}{1}{2}{3} + a_3\deuxtroisseizieme{1}{2}{1}{3}{2} + a_4\deuxtroisvingtetunieme{1}{2}{1}{3}{2}\right) \\
+& a_3\deuxtroishuitieme{2}{1}{1}{2}{3} + a_4\left(a_1\deuxtroisquinzieme{1}{2}{2}{1}{3} + a_2\deuxtroisvingtetunieme{1}{2}{2}{3}{1} + a_3\deuxtroisvingtdeuxieme{1}{2}{3}{2}{1} + a_4\deuxtroisdouzieme{2}{1}{1}{2}{3}\right),
\end{align*} and at the end of the other path, we get

\begin{align*}
&a_1\left(a_1\deuxtroisseptieme{1}{2}{1}{2}{3} + a_2\deuxtroisdixneuvieme{1}{2}{1}{2}{3} + a_3\deuxtroisvingtieme{1}{2}{2}{1}{3} + a_4\deuxtroisquinzieme{1}{2}{2}{1}{3}\right) + a_2\deuxtroisonzieme{1}{2}{1}{2}{3} \\
+& a_3\left(a_2\deuxtroisvingtieme{1}{2}{3}{1}{2} + a_2\deuxtroisseizieme{1}{2}{1}{3}{2} + a_3\deuxtroishuitieme{2}{1}{1}{2}{3} + a_4\deuxtroisvingtdeuxieme{1}{2}{3}{2}{1}\right) + a_4\deuxtroisdouzieme{2}{1}{1}{2}{3}.
\end{align*}

The difference between these two elements is

\begin{align*}
x_a :=& (a_1 - a_1^2)\deuxtroisseptieme{1}{2}{1}{2}{3} + (a_2^2 - a_2)\deuxtroisonzieme{1}{2}{1}{2}{3} + (a_3 - a_3^2)\deuxtroishuitieme{2}{1}{1}{2}{3} + (a_4^2 - a_4)\deuxtroisdouzieme{2}{1}{1}{2}{3} \\
+& a_2a_4\left(\deuxtroisvingtetunieme{1}{2}{1}{3}{2} + \deuxtroisvingtetunieme{1}{2}{2}{3}{1}\right) - a_1a_3\left(\deuxtroisvingtieme{1}{2}{2}{1}{3} + \deuxtroisvingtieme{1}{2}{3}{1}{2}\right).
\end{align*}
        		
Thus this graph rewrites uniquely if and only if $x_a = 0$ in $\mcal{F}^{(3)}(E)$. In other words, it rewrites uniquely if and only if $a$ is in the set \[C = \{a \in \bb{C}^4 | \forall i \in \{1, 2, 3, 4\}, a_i = a_i^2 \text{ and } a_2a_4 = a_1a_3 = 0\}.\]
        		
Starting from the graph \[\troisdeuxdeuxieme{1}{2}{1}{2}{3},\] the computation is the same but the roles of $a_1$ and $a_2$ are switched, this means that $\mcal{P}_a$ induces a confluent system if and only if $a \in C$ and $\tilde{a} \in C$.  		
\end{proof}

One can now state this new conjecture, which is equivalent to Conjecture \ref{Conjecture}.
        		
\begin{conj} \label{NewConj}%New conjecture.
Let $a$ be a $4$-tuple in $\bb{C}$, then the following are equivalent :
        			
\begin{enumerate}
\item the morphism $\varphi_a$ is an isomorphism in weight $3$,
\item the properad $\mcal{P}_a$ is Koszul.
\end{enumerate}
        			
Moreover, both are true if and only if $\between_a$ is one of the following terms, up to isomorphism : \[\jesus{}{}{}{}, \jesus{}{}{}{} - \dromadroite{}{}{}{}\text{ or }\jesus{}{}{}{} - \dromadroite{}{}{}{} - \dromagauche{}{}{}{}.\]
\end{conj}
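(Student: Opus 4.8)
The plan is to split the statement into the equivalence (i) $\Leftrightarrow$ (ii) and the explicit list of Koszul cases, treating the implication (i) $\Rightarrow$ (ii) via Vallette's replacement rule and the converse via the Euler characteristic method of this paper. The first observation is that, combining the rewriting analysis of Section \ref{SubsectionConfluence} with the equivalence (established in this paper) between confluence of $\mcal{P}_a$ and the existence of an isomorphism $\mcal{P}_a \simeq \mcal{A} \boxtimes \mcal{C}$, and noting that it suffices to have this isomorphism in weight $3$ since the coherence then propagates to all weights, item (i) holds exactly when $a \in C$ and $\tilde a \in C$, that is, when $a$ is one of the seven tuples listed after the Proposition. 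I would then let the symmetries of the family act on these seven tuples — the horizontal symmetry $a_1 \leftrightarrow a_2$, which realizes $\mcal{P}_a^{\op} = \mcal{P}_{\tilde a}$, and the symmetry exchanging the product $\produit{}{}{}$ with the coproduct $\coproduit{}{}{}$, hence $\dromadroite{}{}{}{}, \dromagauche{}{}{}{}$ with $\poissongauche{}{}{}{}, \poissondroite{}{}{}{}$ — and check that the three resulting orbits are represented by $\between_a = \jesus{}{}{}{}$, $\between_a = \jesus{}{}{}{} - \dromadroite{}{}{}{}$ and $\between_a = \jesus{}{}{}{} - \dromadroite{}{}{}{} - \dromagauche{}{}{}{}$. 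Once (i) $\Leftrightarrow$ (ii) is established this gives the ``moreover'' clause, since the three resulting properads are pairwise non-isomorphic.

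The implication (i) $\Rightarrow$ (ii) is the one already recorded in the discussion of Conjecture \ref{Conjecture}: under (i) the $\bb{S}$-bimodule map $\lambda$ sending $\jesus{}{}{}{}$ to $a_1\dromadroite{}{}{}{} + a_2\dromagauche{}{}{}{} + a_3\poissongauche{}{}{}{} + a_4\poissondroite{}{}{}{}$ satisfies the weight-$3$ coherence condition, so the composites $\underbrace{\mcal{A}}_{1}\boxtimes\underbrace{\mcal{C}}_{2} \to \mcal{P}_a$ and $\underbrace{\mcal{A}}_{2}\boxtimes\underbrace{\mcal{C}}_{1} \to \mcal{P}_a$ are injective in weight $3$, hence — by the usual propagation argument for distributive laws — in every weight, so $\lambda$ is a replacement rule; as $\mcal{A}$ and $\mcal{C}$ are Koszul, Theorem \ref{TheoReplacement} yields that $\mcal{P}_a$ is Koszul.

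The implication (ii) $\Rightarrow$ (i) is the substantial one, and I would attack it by the contrapositive: for $a$ not among the seven confluent tuples, exhibit a non-acyclicity in the Koszul complex $\mcal{P}_a \boxtimes \mcal{P}_a^{\text{\textexclamdown}}$. Concretely, I would generate with the \texttt{Sagemath} script a basis of $\mcal{F}(E)$ and a generating family of $R_a$ in weight $4$ and biarity $(2,4)$, use Proposition \ref{propCliftonproperad} to read the multiplicities $m_{\lambda,\mu}$ of the space of relations off the ranks of the representation matrices, whose entries lie in $\bb{C}[a_1,a_2,a_3,a_4]$, pass to the Koszul dual coproperad via Proposition \ref{PropDual}, compute the multiplicities of each term of the Koszul complex using the Pieri rules, and conclude that a non-zero Euler characteristic in some isotypic component $(\lambda,\mu) \vdash (2,4)$ rules out acyclicity, hence Koszulness.

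The main obstacle is exactly this last step in full generality. As Theorem \ref{TheoGen} and the present excerpt already point out, for some non-confluent $a$ every isotypic Euler characteristic in weight $4$, biarity $(2,4)$, vanishes, so that single test is inconclusive; closing the gap would require enlarging the search to further weights and biarities (in particular ones distributing inputs and outputs more evenly), or computing the differential of the Koszul complex on the remaining exceptional loci in $\bb{C}^4$, or producing another homological obstruction — and I do not expect any of these to go through uniformly for all remaining $a$ at once. That is why, at the level of this excerpt, the statement stands as a conjecture: the unconditional content is (i) $\Rightarrow$ (ii), the orbit count of the confluent tuples, and the non-Koszulness of those non-confluent families already covered by Theorem \ref{TheoGen} and its refinements.
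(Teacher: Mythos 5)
The statement you were asked to prove is labelled a conjecture, and the paper itself never proves it: it only establishes the confluence characterization of item (i) (Theorem \ref{TheoPartial} and the seven tuples in $C$), Koszulness of those confluent cases via the replacement rule (Theorem \ref{TheoReplacement}), and non-Koszulness for the particular subfamilies of Theorem \ref{TheoGen} via the Euler-characteristic criterion in weight $4$, biarity $(2,4)$. Your proposal assembles exactly these same ingredients — replacement rule for (i) $\Rightarrow$ (ii), the identification of the three isomorphism classes of confluent relations for the ``moreover'' clause, and representation matrices, Pieri rules and isotypic Euler characteristics for the contrapositive of (ii) $\Rightarrow$ (i) — and correctly concludes that this criterion is inconclusive for the remaining non-confluent parameters, so the full equivalence stays open. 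In other words, you are in the same position as the paper: no gap introduced, none closed, and your honest delimitation of what is actually provable (the forward implication, the orbit count, and the subfamilies already covered) is accurate.
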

        		
Indeed, the properads $\mcal{P}_a$ for $a = (1, 0, 0, 0), (0, 1, 0, 0), (0, 0, 1, 0)$ and $(0, 0, 0, 1)$ are in the same isomorphic class, so are the properads $\mcal{P}_a$ for $a = (1, 1, 0, 0)$ and $(0, 0, 1, 1)$.

\subsection{Koszulness of confluent properads}

\label{SecKoszulness}

Let us recall that the morphism $\varphi_a$ is defined as the composition \[\mcal{A} \boxtimes \mcal{C} \hookrightarrow \mcal{F}(E) \boxtimes \mcal{F}(E) \rightarrow \mcal{F}(E) \twoheadrightarrow \mcal{P}_a.\] The same argument as in the proof of \cite[Proposition 6.2]{EnEt} prove that this morphism is surjective in weight $3$. Thus the question is : is $\varphi_a$ injective in weight $3$ ? We only need to compute the dimensions of $\mcal{A} \boxtimes \mcal{C}$ and $\mcal{P}_a$ in weight $3$, biarity by biarity. The method we will use in this section can be used to compute the biarities $(1, 4)$, $(4, 1)$, $(1, 2)$ and $(2, 1)$, and in these cases, $\varphi_a$ is always injective.
			
The \texttt{Jupyter} notebook \texttt{isobiarity23Method2.ipynb} in \cite{NeWeb} (see Section \ref{SecSagemath}) accompagnies the following proof.

Here we use representation matrices, see Section \ref{SecRepmat}, to compute the representation $R_a^{(3)}(2, 3)$ of the group $\mathbb{S}_2 \times \mathbb{S}_3^{\op}$. $R_a^{(3)}(2, 3)$ is a subspace of $\mcal{F}(E)^{(3)}(2, 3) = \bb{C}[\bb{S}_2 \times \bb{S}_3^{\op}]^{\oplus 22}$, thus we study $R_a^{(3)}(2, 3)$ as a space of relations in $22$ copies of the regular representation of $\mathbb{S}_2 \times \mathbb{S}_3^{\op}$. We have a total of $13$ relations. Thus we get for every $(\lambda, \mu) \vdash (2, 3)$ a $13 d_\lambda d_\mu \times 22 d_\lambda d_\mu$ representation matrix $C_{\lambda, \mu}(R_a^{(3)}(2, 3))$ such that $m_{\lambda, \mu}(R_a^{(3)}(2, 3)) = \rk(C_{\lambda, \mu}(R_a^{(3)}(2, 3)))$. For every $(\lambda, \mu) \vdash (2, 3)$, we compute the partial Smith form (see \cite{BrDo}) of $C_{\lambda, \mu}(R_a^{(3)}(2, 3))$ and get a matrix of the form \[\begin{pmatrix} I_{k_{\lambda, \mu}} & * \\ 0 & B^a_{\lambda, \mu} \end{pmatrix}\] 
			
We simplify the matrices, getting rid of zero and duplicate columns and rows, getting new matrices $SB_{\lambda, \mu}^a$ such that $\rk(SB^a_{\lambda, \mu}) = \rk(B^a_{\lambda, \mu})$. The matrices $SB^a_{\lambda, \mu}$ are given in the Tables \ref{tablebalambdamu21} and \ref{tablebalambdamurest}, and the integers $k_{\lambda, \mu}$ are given in the Table \ref{tableklambdamu}.
			
\begin{table}[h!]
\caption{$SB^a_{\lambda, \mu}$ depending on $\lambda$ for $\mu = (2, 1)$}
\label{tablebalambdamu21}
\centering
\begin{tabular}{r|c}
$\lambda \backslash \mu$ & $(2, 1)$ \\
\hline
$(1, 1)$ & $\begin{pmatrix} -2R_{2, 4} & R_2 & R_2 & R_3 & 0 & -R_{1, 3} & -R_1 & 0 & -R_4 & -R_4 \\ R_{2, 4} & -R_2 & 0 & 0 & R_3 & -R_{1, 3} & 0 & -R_1 & 0 & R_4 \end{pmatrix}$ \\
\hline
$(2)$ & $\begin{pmatrix} 2R_{2, 4} & R_2 & R_2 & R_3 & 0 & R_{1, 3} & R_1 & 0 & R_4 & R_4 \\ R_{2, 4} & -R_2 & 0 & 0 & R_3 & R_{1, 3} & 0 & R_1 & 0 & -R_4 \end{pmatrix}$
\end{tabular}
\end{table}
			
\begin{table}[h!]
\caption{$SB^a_{\lambda, \mu}$ depending on $\lambda$ for $\mu \neq (2, 1)$}
\label{tablebalambdamurest}
\centering
\begin{tabular}{r|c|c}
$\lambda \backslash \mu$ & $(1, 1, 1)$ & $(3)$ \\
\hline
$(1, 1)$ & $\begin{pmatrix} -R_2 & R_3 & -R_1 & -R_4 \end{pmatrix}$ & $\begin{pmatrix} 2R_{2, 4} & -R_2 & R_3 & -2R_{1, 3} & -R_1 & R_4 \end{pmatrix}$ \\
\hline
$(2)$ & $\begin{pmatrix} -R_2 & R_3 & R_1 & R_4 \end{pmatrix}$ & $\begin{pmatrix} -2R_{2, 4} & -R_2 & R_3 & 2R_{1, 3} & R_1 & -R_4 \end{pmatrix}$
\end{tabular}
\end{table}
			
\begin{table}[h!]
\caption{$k_{\lambda, \mu}$ depending on $\lambda$ and $\mu$}
\label{tableklambdamu}
\centering
\begin{tabular}{r|c|c|c}
$\lambda \backslash \mu$ & $(1, 1, 1)$ & $(2, 1)$ & $(3)$ \\
\hline
$(1, 1)$ & $12$ & $24$ & $12$ \\
\hline
$(2)$ & $12$ & $24$ & $12$
\end{tabular}
\end{table}
			
From this we compute $m_{\lambda, \mu}(R_a^{(3)}(2, 3))$ depending on $\mu$ and $a$, see Table \ref{tablebiarity23} where $NC$ stands for Non-Confluent and : 

\begin{enumerate}
\item $NC_{\norm}$ is the set of $4$ tuples $(a_1, a_2, a_3, a_4)$ such that there exists $i \in \{1, 2, 3, 4\}$ such that $a_i \neq a_i^2$.
\item $NC_1$ is the singleton $NC_1 = \{(1, 1, 1, 1)\}$.
\item $NC_{x, 0}$ is the set of $4$-tuples which are not in $C$, $NC_{\norm}$ or $NC_1$, that is the set \[NC_{x, 0} := \{(1, 1, 1, 0), (1, 1, 0, 1), (1, 0, 1, 1), (0, 1, 1, 1), (1, 0, 1, 0), (0, 1, 0, 1)\}.\]
\end{enumerate} 

Moreover, $(\mcal{A} \boxtimes \mcal{C})^{(3)}(2, 3) = \bb{C}[\bb{S}_2 \times \bb{S}_3^{\op}]^{\oplus 10}$, thus the only case where the $\mathbb{S}_2 \times \mathbb{S}_3^{\op}$-module structures of $(\mcal{A} \boxtimes \mcal{C})^{(3)}(2, 3)$ and $\mcal{P}_a^{(3)}(2, 3)$ are the same is when $a \in C$. This proves the following theorem.
			
\begin{table}[h!]
\caption{$m_{\lambda, \mu}(R_a^{(3)}(2, 3))$ depending on $a$ and $\mu$}
\label{tablebiarity23}
\centering
\begin{tabular}{r|c|c|c}
$a\backslash \mu$ & $(3)$ & $(2, 1)$ & $(1, 1, 1)$ \\
\hline
$C$ & $12$ & $24$ & $12$ \\
\hline
$NC_{\norm}$ & $13$ & $26$ & $13$ \\
\hline
$NC_1$ & $12$ & $26$ & $13$ \\
\hline
$NC_{x, 0}$ & $12$ & $25$ & $13$
\end{tabular}
\end{table}

\begin{theo}  
\label{TheoPartial}		
Let $a \in \bb{C}^4$, the following are equivalent :
        			
\begin{enumerate}
\item the properad $\mcal{P}_a$ induces a confluent system,
\item the morphism $\varphi_a$ is an isomorphism of $\bb{S}$-bimodules in weight $3$.
\end{enumerate}
        			
\end{theo}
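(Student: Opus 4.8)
The plan is to prove the equivalence by a dimension count, biarity by biarity, in weight $3$. The key observation is that by \cite[Proposition 6.2]{EnEt}-type arguments (invoked in the text just before the statement), the morphism $\varphi_a$ is already known to be surjective in weight $3$; hence $\varphi_a$ is an isomorphism in weight $3$ if and only if the source and target have the same dimension in each biarity, equivalently the same $\bb{S}_m \times \bb{S}_n^{\op}$-module structure in each biarity $(m,n)$ with $m+n$ as dictated by weight $3$. Since $\mcal{A} \boxtimes \mcal{C}$ does not depend on $a$, and $\dim \mcal{P}_a^{(3)}(m,n) = \dim \mcal{F}(E)^{(3)}(m,n) - \dim R_a^{(3)}(m,n)$, the whole question reduces to computing the multiplicities $m_{\lambda,\mu}(R_a^{(3)}(m,n))$ as functions of $a$, and comparing with the (constant) multiplicities of $R^{(3)}$ for the composite properad $\mcal{A}\boxtimes\mcal{C}$.

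First I would reduce to the single biarity $(2,3)$ (and its horizontal mirror $(3,2)$, handled automatically by the symmetry $\mcal{P}_a^{\op} \simeq \mcal{P}_{\tilde a}$ noted in the text). The biarities $(1,4)$, $(4,1)$, $(1,2)$, $(2,1)$ are purely operadic/cooperadic — there $\mcal{P}_a$ restricts to the free properad modulo only the (co)associativity relations, which are Koszul and for which $\varphi_a$ is an isomorphism regardless of $a$ — so they contribute nothing to the distinction. The biarity $(2,2)$ involves $\between_a$ but the count there turns out not to separate confluent from non-confluent $a$; the decisive biarity is $(2,3)$ (and symmetrically $(3,2)$). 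So the core of the proof is: $\varphi_a$ is an isomorphism in weight $3$ iff $m_{\lambda,\mu}(R_a^{(3)}(2,3)) = m_{\lambda,\mu}(R^{(3)}_{\mcal{A}\boxtimes\mcal{C}}(2,3))$ for all $(\lambda,\mu)\vdash(2,3)$, and the latter holds (as read off from the tables) precisely when $a\in C$ — that is, precisely when $\mcal{P}_a$ induces a confluent system, by the Proposition just proved.

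Next I would carry out the multiplicity computation via representation matrices (Section \ref{SecRepmat}, Proposition \ref{propCliftonproperad}). Using the \texttt{Sagemath} script of \cite{NeWeb} one obtains a basis of $\mcal{F}(E)^{(3)}(2,3) \simeq \bb{C}[\bb{S}_2\times\bb{S}_3^{\op}]^{\oplus 22}$ and the $13$ generators of $R_a^{(3)}(2,3)$ inside it. For each $(\lambda,\mu)\vdash(2,3)$ one forms the $13 d_\lambda d_\mu \times 22 d_\lambda d_\mu$ representation matrix $C_{\lambda,\mu}(R_a^{(3)}(2,3))$, whose rank is $m_{\lambda,\mu}(R_a^{(3)}(2,3))$; passing to a partial Smith form isolates a parametric block $B^a_{\lambda,\mu}$, and after deleting zero/duplicate rows and columns one is left with the small matrices $SB^a_{\lambda,\mu}$ of Tables \ref{tablebalambdamu21}--\ref{tablebalambdamurest}. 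Computing their ranks as $a$ varies over the strata $C$, $NC_{\norm}$, $NC_1$, $NC_{x,0}$ yields Table \ref{tablebiarity23}. Comparing with $(\mcal{A}\boxtimes\mcal{C})^{(3)}(2,3) \simeq \bb{C}[\bb{S}_2\times\bb{S}_3^{\op}]^{\oplus 10}$ — whose multiplicities are therefore $10 d_\lambda d_\mu$, i.e. $m_{(3)}=10$, $m_{(2,1)}=20$, $m_{(1,1,1)}=10$ — one sees that $\mcal{P}_a^{(3)}(2,3)$ has the same module structure exactly when $m_{\lambda,\mu}(R_a^{(3)}(2,3)) = 22 d_\lambda d_\mu - 10 d_\lambda d_\mu$, i.e. when the row for $a$ in Table \ref{tablebiarity23} reads $(12,24,12)$, i.e. when $a\in C$.

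The main obstacle is the rank computation itself — one must verify that the ranks of the $SB^a_{\lambda,\mu}$ jump by exactly the amounts recorded in Table \ref{tablebiarity23} on each stratum, and in particular that on $NC_{x,0}$ and $NC_1$ only the $\mu=(2,1)$ (resp. also $\mu=(1,1,1)$) block picks up extra rank while $\mu=(3)$ does not, so that these partially-non-confluent parameters still fail to give an isomorphism. This is a finite but delicate linear-algebra verification over $\bb{C}$, for which the accompanying \texttt{Jupyter} notebook \texttt{isobiarity23Method2.ipynb} in \cite{NeWeb} provides the certificate; the conceptual content is entirely in the reduction above, and once Table \ref{tablebiarity23} is established, together with the identification of the confluence locus $C$ from the preceding Proposition, the theorem follows immediately.
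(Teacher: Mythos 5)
Your proposal follows the same route as the paper: surjectivity of $\varphi_a$ in weight $3$ via the argument of \cite[Proposition 6.2]{EnEt}, reduction to a multiplicity comparison biarity by biarity, and the representation-matrix computation of $m_{\lambda,\mu}(R_a^{(3)}(2,3))$ compared against $(\mcal{A}\boxtimes\mcal{C})^{(3)}(2,3)\simeq\bb{C}[\bb{S}_2\times\bb{S}_3^{\op}]^{\oplus 10}$, with the verdict read off from Table \ref{tablebiarity23}.

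There is, however, one genuine slip in your final assembly. You claim that the matching of multiplicities in biarity $(2,3)$ holds ``precisely when $a\in C$ --- that is, precisely when $\mcal{P}_a$ induces a confluent system, by the Proposition just proved.'' The Proposition actually states that confluence is equivalent to $a\in C$ \emph{and} $\tilde a\in C$, and these are not the same condition: for instance $a=(1,0,0,1)$ lies in $C$ while $\tilde a=(0,1,0,1)\notin C$, so $\mcal{P}_a$ does not induce a confluent system even though the biarity-$(2,3)$ test passes. The mirror biarity $(3,2)$ --- which you correctly observe is governed by the $(2,3)$ computation for $\mcal{P}_{\tilde a}\simeq\mcal{P}_a^{\op}$ --- is therefore not a redundant check handled ``automatically'': it is exactly what detects the condition $\tilde a\in C$, and both biarities are needed to obtain the equivalence with confluence. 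Two smaller inaccuracies: in weight $3$ the biarities $(1,2)$ and $(2,1)$ are not purely (co)operadic (a connected weight-$3$ graph of biarity $(1,2)$ necessarily has two products and one coproduct), and biarity $(2,2)$ does not occur in weight $3$ at all, so your remark about it is vacuous. Neither of these affects the overall structure, but the reduction to the single biarity $(2,3)$ must be repaired as above for the stated equivalence to follow.
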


This shows a part of Conjecture \ref{NewConj}.

\section{Proving non-Koszulness of properads}
\label{SecNonKoszulness}

We can check that all $a \in C$ such that $\tilde{a} \in C$ provide a Koszul properad. In fact if $a = (0, 0, 0, 0)$, we get the properad $\frac{1}{2}\mcal{B}$ (see \cite{MaVo}), if $a = (1, 1, 0, 0)$, we get $\varepsilon \mcal{B}$ (see \cite[Corollary 8.5]{Va}) and if $a = (1, 0, 0, 0)$, Theorem \ref{TheoPartial} and Theorem \ref{TheoReplacement} show that $\mcal{P}_a$ is Koszul. Thus we can state the following.
		
\begin{theo}
Let $a \in \bb{C}^4$. The following are equivalent :
			
\begin{enumerate}
\item the properad $\mcal{P}_a$ induces a confluent system,
\item the morphism $\varphi_a$ is an isomorphism of $\bb{S}$-bimodules in weight $3$.
\end{enumerate}
			
Moreover, in that case, the properad $\mcal{P}_a$ is Koszul.
			
\end{theo}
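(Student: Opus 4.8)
The equivalence of (i) and (ii) is exactly Theorem \ref{TheoPartial}, so it remains only to establish the last assertion: that if $\mcal{P}_a$ induces a confluent system then $\mcal{P}_a$ is Koszul. The plan is to reduce this to a finite list of cases. By the Proposition of Section \ref{SubsectionConfluence} and the Remark following it, the parameters $a$ for which $\mcal{P}_a$ induces a confluent system are exactly
\[\{(0,0,0,0),\ (1,0,0,0),\ (0,1,0,0),\ (0,0,1,0),\ (0,0,0,1),\ (1,1,0,0),\ (0,0,1,1)\}.\]
Using the horizontal-symmetry identity $\mcal{P}_a^{\op}\simeq\mcal{P}_{\tilde a}$ with $\tilde a=(a_2,a_1,a_3,a_4)$, which preserves Koszulness, together with the isomorphisms $\mcal{P}_{(1,0,0,0)}\simeq\mcal{P}_{(0,1,0,0)}\simeq\mcal{P}_{(0,0,1,0)}\simeq\mcal{P}_{(0,0,0,1)}$ and $\mcal{P}_{(1,1,0,0)}\simeq\mcal{P}_{(0,0,1,1)}$ already noted, it suffices to prove Koszulness of the three representatives $\mcal{P}_{(0,0,0,0)}$, $\mcal{P}_{(1,1,0,0)}$ and $\mcal{P}_{(1,0,0,0)}$.

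Two of these are known. For $a=(0,0,0,0)$ the properad $\mcal{P}_a$ is the $\frac{1}{2}$-PROP $\frac{1}{2}\mcal{B}$, whose Koszulness is proved in \cite{MaVo}; for $a=(1,1,0,0)$ the properad $\mcal{P}_a$ is $\varepsilon\mcal{B}$, Koszul by \cite[Corollary 8.5]{Va}. For the last case $a=(1,0,0,0)$ I would apply the replacement-rule criterion of Theorem \ref{TheoReplacement}. Take $V$ to be the $\bb{S}$-bimodule spanned by the product $\mu$ and $W$ the one spanned by the coproduct $\Delta$, with $R\subset\mcal{F}^{(2)}(V)$ the associativity relation and $S\subset\mcal{F}^{(2)}(W)$ the coassociativity relation, so that $\mcal{A}=\mcal{F}(V)/(R)$ is the associative operad and $\mcal{C}=\mcal{F}(W)/(S)$ is the coassociative properad; both are Koszul, the second because $\mcal{C}^{\op}$ is the associative operad and Koszulness is stable under $(-)^{\op}$. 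Let $\lambda:W\boxtimes_{(1)}V\to V\boxtimes_{(1)}W$ be the morphism prescribed by the rewriting rule of $\mcal{P}_{(1,0,0,0)}$, chosen so that $D_\lambda$ is the span of $\between_{(1,0,0,0)}$ and hence $\mcal{P}_{(1,0,0,0)}=\mcal{F}(V\oplus W)/(R\oplus D_\lambda\oplus S)$. Since the space of generators $V\oplus W$ is finite-dimensional and $\mcal{A}$, $\mcal{C}$ are Koszul, Theorem \ref{TheoReplacement} delivers Koszulness of $\mcal{P}_{(1,0,0,0)}$ provided $\lambda$ is a replacement rule, that is, provided the two morphisms $\underbrace{\mcal{A}}_{1}\boxtimes\underbrace{\mcal{C}}_{2}\to\mcal{P}_{(1,0,0,0)}$ and $\underbrace{\mcal{A}}_{2}\boxtimes\underbrace{\mcal{C}}_{1}\to\mcal{P}_{(1,0,0,0)}$ are injective.

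The injectivity of these two morphisms is the step I expect to be the main obstacle. The idea is that the rewriting system of $\mcal{P}_{(1,0,0,0)}$ is confluent and terminating, hence convergent, so by the diamond/Newman lemma for linear rewriting on free properads (see \cite{Mal}) the normal monomials --- those of the form ``a tree built from $\Delta$ stacked over a tree built from $\mu$'', which are precisely the monomial basis of $\mcal{A}\boxtimes\mcal{C}$ --- project to a basis of $\mcal{P}_{(1,0,0,0)}$. Equivalently, $\varphi_a$ is a global isomorphism of $\bb{S}$-bimodules; Theorem \ref{TheoPartial} records only its weight-$3$ part, which serves as the base case of the weight induction that convergence supplies. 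In particular $\varphi_a$ is injective on each of the sub-$\bb{S}$-bimodules $\underbrace{\mcal{A}}_{1}\boxtimes\underbrace{\mcal{C}}_{2}$ and $\underbrace{\mcal{A}}_{2}\boxtimes\underbrace{\mcal{C}}_{1}$, so these two morphisms are injective, $\lambda$ is a replacement rule, and Theorem \ref{TheoReplacement} finishes the proof.
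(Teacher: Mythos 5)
Your proof is correct and follows essentially the paper's own route: reduce to the three isomorphism classes of confluent parameters, quote \cite{MaVo} for $a=(0,0,0,0)$ and \cite[Corollary 8.5]{Va} for $a=(1,1,0,0)$, and apply Theorem \ref{TheoReplacement} to $a=(1,0,0,0)$. The only divergence is that for the injectivity hypothesis of the replacement rule you appeal to a global convergence/normal-form argument, which is both unnecessary and the hardest part of your write-up to make rigorous for properads: the two sub-$\bb{S}$-bimodules $\underbrace{\mcal{A}}_{1}\boxtimes\underbrace{\mcal{C}}_{2}$ and $\underbrace{\mcal{A}}_{2}\boxtimes\underbrace{\mcal{C}}_{1}$ are concentrated in weight $3$, so their injectivity into $\mcal{P}_a$ is an immediate restriction of the weight-$3$ isomorphism of Theorem \ref{TheoPartial}, which is exactly how the paper concludes.
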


This is again another step to prove Conjecture \ref{NewConj}. Now the question is, if $a \notin C$ or $\tilde{a} \notin C$, can we show that $\mcal{P}_a$ is not Koszul ? 

\subsection{The Koszul complex}

We have the following sequence of implications :
		
\begin{align*}
\mcal{P}_a\text{ is Koszul } &\implies \mcal{P}_a \boxtimes \mcal{P}_a^{\text{\textexclamdown}}\text{ is acyclic } \implies (\mcal{P}_a \boxtimes \mcal{P}_a^{\text{\textexclamdown}})^{(4)}(2, 4)\text{ is acyclic } \\ 
&\implies\text{ for all }(\lambda, \mu) \vdash (2, 4)\text{, the isotypic component } \\
&[(\mcal{P}_a \boxtimes \mcal{P}_a^{\text{\textexclamdown}})^{(4)}(2, 4)]_{(\lambda, \mu)}\text{ is acyclic } \\
&\implies\text{ for all }(\lambda, \mu) \vdash (2, 4)\text{, the Euler characteristic of } \\
&[(\mcal{P}_a \boxtimes \mcal{P}_a^{\text{\textexclamdown}})^{(4)}(2, 4)]_{(\lambda, \mu)}\text{ is zero.}
\end{align*}
		
Thus we will consider this last necessary condition and call it \textit{criterion}. The goal of this section is to compute the multiplicities of every chain of the Koszul chain complex of $\mcal{P}_a$ in biarity $(2, 4)$ and weight $4$. We can look for multiplicities instead of dimensions because every map in the complex is equivariant. We look in weight $4$ because in lower weights, this criterion is always true. We also look for biarity $(2, 4)$ because it seems to be the simplest one to study among the ones where we can find something interesting. We could look for biarity $(3, 3)$, or higher weights, the method would be the same but the computations would be too long. 
		
For the \texttt{SageMath} computations, the file \texttt{Koszulcpxbiarity24.ipynb} in \cite{NeWeb} accompanies this section, see Section \ref{SecSagemath}.
			
Let us study the Koszul complex of $\mcal{P}_a$ in weight $4$ biarity $(2, 4)$ :
			
{\small \[0 \rightarrow \overbrace{(\mcal{P}_a^{\text{\textexclamdown}})^{(4)}(2, 4)}^{\text{Degree } 4} \rightarrow \overbrace{\underbrace{\mcal{P}_a}_{1} \boxtimes \underbrace{\mcal{P}_a^{\text{\textexclamdown}}}_{3}(2, 4)}^{\text{Degree } 3} \rightarrow \overbrace{\underbrace{\mcal{P}_a}_{2} \boxtimes \underbrace{\mcal{P}_a^{\text{\textexclamdown}}}_{2}(2, 4)}^{\text{Degree } 2} \rightarrow \overbrace{\underbrace{\mcal{P}_a}_{3} \boxtimes \underbrace{\mcal{P}_a^{\text{\textexclamdown}}}_{1}(2, 4)}^{\text{Degree } 1} \rightarrow \overbrace{\mcal{P}_a^{(4)}(2, 4)}^{\text{Degree } 0} \rightarrow 0.\]}
			
Let us first introduce some notations.
			
\begin{notas}
Let $(\lambda, \mu) \vdash (2, 4)$. We set
				
\begin{enumerate}
\item $M_{\lambda, \mu} := m_{\lambda,\mu}(\bb{C}[\bb{S}_2 \times \bb{S}_4^{\op}])$.
\item $X^a := (\mcal{P}_a^{(3)}(2, 3) \boxtimes_{(1)} E(1, 2))(2, 4)$ and $x^a_{\lambda, \mu} := m_{\lambda,\mu}(X^a)$.
\item $Y^a := (E(1 ,2) \boxtimes_{(1)} (\mcal{P}_a^{\text{\textexclamdown}})^{(3)}(2, 3))(2, 4)$ and $y^a_{\lambda, \mu} := m_{\lambda,\mu}(Y^a)$.
\item $(M\mcal{P}_a)_{\lambda, \mu} := m_{\lambda,\mu}(\mcal{P}_a^{(4)}(2, 4))$ and $(M\mcal{P}_a^{\text{\textexclamdown}})_{\lambda, \mu} := m_{\lambda,\mu}((\mcal{P}_a^{\text{\textexclamdown}})^{(4)}(2, 4))$.
\item $(MR_a)_{\lambda, \mu} := m_{\lambda, \mu}(R_a^{(4)}(2, 4))$ and $(MR_a^{\perp})_{\lambda, \mu} := m_{\lambda, \mu}((R_a^{\perp})^{(4)}(2, 4))$
\end{enumerate}
\end{notas}
			
As the criterion we are looking for only involves multiplicities of the chains, we will study this complex degree by degree.

\subsection{Internal chains}

\subsubsection{Degree 1}
			
Looking at the combinatorics with the weight and the biarity, we can see that 
				
\begin{align*}
\underbrace{\mcal{P}_a}_{3} \boxtimes \underbrace{\mcal{P}_a^{\text{\textexclamdown}}}_{1}(2, 4) =&  X^a \oplus (\mcal{P}_a^{(3)}(1, 4) \boxtimes_{(1)} E(2, 1))(2, 4) \\
& \oplus ((E(1, 2), \mcal{P}_a^{(2)}(1, 3)) \boxtimes_{(1)} E(2, 1))(2, 4).
\end{align*} According to Notation \ref{NotaInfinitesimal}, these subspaces correspond respectively to the diagrams 
				
\[\PtBEu, \PtBEd\text{ and } \PdEBE,\] the first space has multiplicity $x^a_{\lambda, \mu}$, which we will compute in \ref{SecMultInter}, the second one is given by Corollary \ref{CoroComposBlocks}, and the last one can be computed with the same methods as in Theorem \ref{TheoComposBlocks}, it is equal to
				
{\scriptsize
\begin{align*}
& \left(\bigoplus_{\substack{\overline{k} =(1, \dots, 2, \dots, 1) \\ \nor{\overline{k}} = 5}} \bb{C}[\bb{S}_2] \otimes (E(1, 2) \otimes \mcal{P}_a^{(2)}(1, 3)) \otimes_{\bb{S}_2 \times \bb{S}_3} \bb{C}[\bb{S}^c_{(2, 3), \overline{k}}] \otimes_{\bb{S}_{\overline{k}}} (\bb{C} \otimes \dots \otimes E(2, 1) \otimes \dots \otimes \bb{C}) \otimes \bb{C}[\bb{S}_4]\right. \\
& \left. \oplus \bigoplus_{\substack{\overline{k} = (1, \dots, 2, \dots, 1) \\ \nor{\overline{k}} = 5}} \bb{C}[\bb{S}_2] \otimes (\mcal{P}_a^{(2)}(1, 3) \otimes E(1, 2)) \otimes_{\bb{S}_3 \times \bb{S}_2} \bb{C}[\bb{S}^c_{(3, 2), \overline{k}}] \otimes_{\bb{S}_{\overline{k}}} (\bb{C} \otimes \dots \otimes E(2, 1) \otimes \dots \otimes \bb{C}) \otimes \bb{C}[\bb{S}_4]\right)/\sim.
\end{align*}} But again we can reorder the blocks and use the fact that the $\bb{S}$-bimodules involved in these arities are regular representations to get
				
\begin{align*}
&\left(\bb{C}[\bb{S}_2] \otimes (\bb{C}[\bb{S}_2] \otimes \bb{C}[\bb{S}_3]) \otimes_{\bb{S}_2 \times \bb{S}_3} \bb{C}[\bb{S}^c_{(2, 3), (2, 1, 1, 1)}] \otimes_{\bb{S}_2} \bb{C}[\bb{S}_2] \otimes \bb{C}[\bb{S}_4]\right)/\sim' \\
=& \left(\bb{C}[\bb{S}_2] \otimes \bb{C}[\bb{S}^c_{(2, 3), (2, 1, 1, 1)}] \otimes \bb{C}[\bb{S}_4]\right)/\sim'.
\end{align*} Here we have \[\bb{S}^c_{(2, 3), (2, 1, 1, 1)} \simeq \{1, 2, 3\} \times \{1, 2\} \times \{1, 2\} \times \bb{S}_3,\] with a right action of $\bb{S}_3$ corresponding to the relation $\sim'$, which gives 
				
\begin{align*}((E(1, 2), \mcal{P}_a^{(2)}(1, 3)) \boxtimes_{(1)} E(2, 1))(2, 4) &= \bb{C}[\bb{S}_2] \otimes \bb{C}[\bb{S}^c_{(2, 3), (2, 1, 1, 1)}] \otimes_{\bb{S}_3} \bb{C}[\bb{S}_4] \\
&= \bb{C}[\bb{S}_2] \otimes \bb{C}^{12} \otimes \bb{C}[\bb{S}_4] \\
&= \bb{C}[\bb{S}_2 \times \bb{S}_4^{\op}] ^{\oplus 12}.
\end{align*}
				
Finally, we have
				
\[m_{\lambda, \mu}((\underbrace{\mcal{P}_a}_{3} \boxtimes \underbrace{\mcal{P}_a^{\text{\textexclamdown}}}_{1})(2, 4)) = x^a_{\lambda, \mu} + 20M_{\lambda, \mu}.\] 
        			
\subsubsection{Degree 2}
        		
Similarly to degree $1$, we can see that the space $\underbrace{\mcal{P}_a}_{2} \boxtimes \underbrace{\mcal{P}_a^{\text{\textexclamdown}}}_{2}(2, 4)$ is
        			
\begin{align} \label{deg2}
& \left(E(1, 2) \boxtimes_{(2), (1, 1)} (E(1, 2), E(2, 1))\right)(2, 4) \oplus \left(E(1, 2) \boxtimes_{(2), (1)} (\mcal{P}_a^{\text{\textexclamdown}})^{(2)}(2, 2)\right)(2, 4) \\
& \oplus \left(\mcal{P}_a^{(2)}(2, 2) \boxtimes_{(1), (2)} E(1, 2)\right)(2, 4) \oplus \left(\mcal{P}_a^{(2)}(1, 3) \boxtimes_{(1)} (E(1, 2), E(2, 1))\right)(2, 4) \\
& \oplus \left(\mcal{P}_a^{(2)}(2, 2) \boxtimes_{(1)} (\mcal{P}_a^{\text{\textexclamdown}})^{(2)}(1, 3)\right)(2, 4) \oplus \left(\mcal{P}_a^{(2)}(1, 3) \boxtimes_{(1)} (\mcal{P}_a^{\text{\textexclamdown}})^{(2)}(2, 2)\right)(2, 4).
\end{align} According to Notation \ref{NotaInfinitesimal}, these subspaces correspond respectively to the diagrams 
        			
\[\EEBEE, \PdBEE, \EEBPdu, \EEBPdd, \PdBPdu\text{ and }\PdBPdd.\] Thus, by Corollary \ref{CoroComposBlocks} and the previous method, we have 
				
\[m_{\lambda, \mu}((\underbrace{\mcal{P}_a}_{3} \boxtimes \underbrace{\mcal{P}_a^{\text{\textexclamdown}}}_{1})(2, 4)) = 42M_{\lambda, \mu}.\]
				
Let us give some details on the computation of the first space in \ref{deg2}. This space is equal to
				
\begin{align*}
&\left(\bb{C}[\bb{S}_2] \otimes (E(1, 2) \otimes E(1, 2) \otimes_{\bb{S}_2 \times \bb{S}_2} \bb{C}[\bb{S}^c_{(2, 2), (1, 1, 2)}] \otimes_{\bb{S}_2} (E(1, 2) \otimes E(2, 1)) \otimes_{\bb{S}_2} \bb{C}[\bb{S}_4]\right)/\sim' \\
=& \left(\bb{C}[\bb{S}_2] \otimes \bb{C}[\bb{S}^c_{(2, 2), (1, 1, 2)}] \otimes \bb{C}[\bb{S}_4]\right)/\sim',
\end{align*} where the relation $\sim'$ corresponds to the exchange between the two products of $E(1, 2)$. We have \[\bb{S}^c_{(2, 2), (1, 1, 2)} \simeq \{1, 2\} \times \{1, 2\} \times \{1, 2\} \times \bb{S}_2,\] where an element $(a, b_1, b_2, \tau) \in \bb{S}^c_{(2, 2), (1, 1, 2)}$ corresponds to the choice $a$ of an output of the coproduct, an input $b_1$ of the first product which is linked to the output $a$, an input $b_2$ of the second product which is linked to the other output, and a permutation $\tau \in \bb{S}_2$ for the rest of the permutation. For example, $(2, 1, 2, (1, 2))$ corresponds to the connected permutation $[3, 2, 4, 1]$. This set is endowed with a left action of $\bb{S}_2$ given by \[\sigma \cdot (a, b_1, b_2, \tau) := (\sigma \cdot a, b_{\sigma(1)}, b_{\sigma(2)}, \tau),\] and a right action given by multiplication with $\tau$. Thus, we have 
				
\begin{align*}
\left(E(1, 2) \boxtimes_{(2), (1, 1)} (E(1, 2), E(2, 1))\right)(2, 4) &= \bb{C}[\bb{S}_2] \otimes_{\bb{S}_2} \bb{C}[\bb{S}^c_{(2, 2), (1, 1, 2)}] \otimes \bb{C}[\bb{S}_4] \\
&= \bb{C}[\bb{S}_2] \otimes \bb{C}^8 \otimes \bb{C}[\bb{S}_4] \\
&= \bb{C}[\bb{S}_2 \times \bb{S}_4^{\op}]^{\oplus 8}.
\end{align*}
        			
\subsubsection{Degree 3}
        		
We have
        		
\begin{align*}
\underbrace{\mcal{P}_a}_{2} \boxtimes \underbrace{\mcal{P}_a^{\text{\textexclamdown}}}_{2}(2, 4) =& Y^a \oplus \left(E(2, 1) \boxtimes_{(1)} (\mcal{P}_a^{\text{\textexclamdown}})^{(3)}(1, 4)\right)(2, 4) \\
& \oplus \left(E(1, 2) \boxtimes_{(1)} ((\mcal{P}_a^{\text{\textexclamdown}})^{(2)}(2, 2), E(1, 2))\right)(2, 4) \\
& \oplus \left(E(1, 2) \boxtimes_{(1)} ((\mcal{P}_a^{\text{\textexclamdown}})^{(2)}(1, 3), E(2, 1))\right)(2, 4).
\end{align*} According to Notation \ref{NotaInfinitesimal}, these subspaces correspond respectively to the diagrams 
        		
\[\EBPtu, \EBPtd, \EBPdEu\text{ and }\EBPdEd.\] Thus, by Corollary \ref{CoroComposBlocks} and the previous method, we have 
				
\[m_{\lambda, \mu}((\underbrace{\mcal{P}_a}_{1} \boxtimes \underbrace{\mcal{P}_a^{\text{\textexclamdown}}}_{3})(2, 4)) = y^a_{\lambda, \mu} + 9M_{\lambda, \mu}.\]
				
\subsubsection{Calculation of the multiplicities}
\label{SecMultInter}

First, we know the multiplicities $M_{\lambda, \mu}$ of the regular representation, given in Table \ref{MultReg}. Then we compute $x_{\lambda, \mu}^a$ and $y_{\lambda, \mu}^a$ using Pieri's rules.
        		
\begin{table}[h!]
\caption{Values of $M_{\lambda, \mu}$ depending on $\mu$}
\label{MultReg}
\centering
\begin{tabular}{r|c|c|c|c|c}
$\mu$ & $(4)$ & $(3, 1)$ & $(2, 2)$ & $(2, 1, 1)$ & $(1, 1, 1, 1)$ \\
\hline
$M_{\lambda, \mu}$ & $1$ & $3$ & $2$ & $3$ & $1$
\end{tabular}
\end{table}

\textbf{Computation of $x_{\lambda, \mu}^a$ :} We denote, for $(\alpha, \beta) \vdash (2, 3)$, $A_{\alpha, \beta}^a := m_{\alpha, \beta}(\mcal{P}_a^{(3)}(2, 3))$. Thus we have \[\mcal{P}_a^{(3)}(2, 3) = \bigoplus_{(\alpha, \beta) \vdash (2, 3)} V_{\alpha, \beta}^{\oplus A_{\alpha, \beta}^a} = \bigoplus_{(\alpha, \beta) \vdash (2, 3)} (V_{\alpha} \boxtimes V_{\beta})^{\oplus A_{\alpha, \beta}^a}.\] According to Theorem \ref{TheoComposBlocks}, we have \[X^a = \bigoplus_{(\alpha, \beta) \vdash (2, 3)} \left(V_{\alpha} \boxtimes (V_{\beta}\downarrow^{\bb{S}_3}_{\bb{S}_2} \sqcup_r \bb{C}[\bb{S}_2])\right)^{\oplus A_{\alpha, \beta}}.\] Thus by Pieri's rules :
        		
\begin{align*}
X^a &= \bigoplus_{(\alpha, \beta) \vdash (2, 3)} \bigoplus_{\substack{\gamma \vdash 2 \\ \gamma \leq \beta}} \left(V_{\alpha} \boxtimes (V_{\gamma} \sqcup_r \bb{C}[\bb{S}_2])\right)^{\oplus A_{\alpha, \beta}} \\
&= \bigoplus_{(\alpha, \beta) \vdash (2, 3)} \bigoplus_{\substack{\gamma \vdash 2 \\ \gamma \leq \beta}} \left(\left(V_{\alpha} \boxtimes (V_{\gamma} \sqcup_r V_{(1, 1)})\right) \oplus \left(V_{\alpha} \boxtimes (V_{\gamma} \sqcup_r V_{(2)})\right)\right)^{\oplus A_{\alpha, \beta}} \\
&= \bigoplus_{(\alpha, \beta) \vdash (2, 3)} \bigoplus_{\substack{\gamma \vdash 2 \\ \gamma \leq \beta}} \left(\left(\bigoplus_{\substack{\delta \vdash 4 \\ \delta \leq^l \gamma}} V_{\alpha} \boxtimes V_{\delta}\right) \oplus \left(\bigoplus_{\substack{\delta \vdash 4 \\ \delta \leq^c \gamma}} V_{\alpha} \boxtimes V_{\delta}\right)\right)^{\oplus A_{\alpha, \beta}}.
\end{align*}
        		
In terms of Young diagrams, the partition $\delta$ is obtained from $\beta$ by removing one box and adding two boxes, but not both on the same column for the first direct sum and not both on the same row for the second one. For every $\beta \vdash 3$ and $\delta \vdash 4$, we count the number of ways to get $\delta$ from $\beta$ these ways in Table \ref{deltabeta}. We also remind the values of $A^a_{\alpha, \beta}$ from Section \ref{SecKoszulness} for every $a \in \bb{C}^4$ and $(\alpha,\beta) \vdash (2, 3)$, depending on $a$ and $\beta$, see Table \ref{Aa}. Finally, we obtain the values of $x^a_{\lambda, \mu}$ depending on $a$ and $\mu$, see Table \ref{xalambdamu}.
        		
\begin{table}[h!]
\caption{Number of ways to get $\delta$ from $\beta$ by removing one box and adding two boxes, not on the same column (left), not on the same row (right)}
\label{deltabeta}
\centering
\begin{tabular}{r|c|c|c|c|c}
$\beta \backslash \delta$ & \ydiagram{4} & \ydiagram{3, 1} & \ydiagram{2, 2} & \ydiagram{2, 1, 1} & \ydiagram{1, 1, 1, 1} \\
\hline
\ydiagram{3} & $\begin{matrix} 1 & 0 \end{matrix}$ & $\begin{matrix} 1 & 1 \end{matrix}$ & $\begin{matrix} 1 & 0 \end{matrix}$ & $\begin{matrix} 0 & 1 \end{matrix}$ & $\begin{matrix} 0 & 0 \end{matrix}$ \\
\hline
\ydiagram{2, 1} & $\begin{matrix} 1 & 0 \end{matrix}$ & $\begin{matrix} 2 & 1 \end{matrix}$ & $\begin{matrix} 1 & 1 \end{matrix}$ & $\begin{matrix} 1 & 2 \end{matrix}$ & $\begin{matrix} 0 & 1 \end{matrix}$ \\
\hline
\ydiagram{1, 1, 1} & $\begin{matrix} 0 & 0 \end{matrix}$ & $\begin{matrix} 1 & 0 \end{matrix}$ & $\begin{matrix} 0 & 1 \end{matrix}$ & $\begin{matrix} 1 & 1 \end{matrix}$ & $\begin{matrix} 0 & 1 \end{matrix}$
\end{tabular}
\end{table}
        		
\begin{table}[h!]
\caption{Values of $A^a_{\alpha, \beta}$ depending on $a$ and $\beta$}
\label{Aa}
\centering
\begin{tabular}{r|c|c|c|c|c}
$a\in \backslash \beta$ & \ydiagram{3} & \ydiagram{2, 1} & \ydiagram{1, 1, 1} \\
\hline
$C$ & $10$ & $20$ & $10$ \\
\hline
$NC_{\norm}$ & $9$ & $18$ & $9$ \\
\hline
$NC_1$ & $10$ & $18$ & $9$ \\
\hline
$NC_{x, 0}$ & $10$ & $19$ & $9$
\end{tabular}
\end{table}
        		
\begin{table}[h!]
\caption{$x^a_{\lambda,\mu}$ depending on $a$ and $\mu$}
\label{xalambdamu}
\centering
\begin{tabular}{r|c|c|c|c|c}
$a\in \backslash \mu$ & \ydiagram{4} & \ydiagram{3, 1} & \ydiagram{2, 2} & \ydiagram{2, 1, 1} & \ydiagram{1, 1, 1, 1} \\
\hline
$C$ & $30$ & $90$ & $60$ & $90$ & $30$ \\
\hline
$NC_{\norm}$ & $27$ & $81$ & $54$ & $81$ & $27$ \\
\hline
$NC_1$ & $28$ & $83$ & $55$ & $82$ & $27$ \\
\hline
$NC_{x, 0}$ & $29$ & $86$ & $57$ & $85$ & $28$
\end{tabular}
\end{table}
       
\textbf{Computation of $y_{\lambda, \mu}^a$ :} We now denote, for $(\alpha, \beta) \vdash (2, 3)$, $B_{\alpha,\beta}^a := m_{\alpha, \beta}(\mcal{P}_a^{\text{\textexclamdown}(3)}(2, 3))$. We have \[\mcal{P}_a^{\text{\textexclamdown}(3)}(2, 3) = \bigoplus_{(\alpha, \beta) \vdash (2, 3)} V_{\alpha, \beta}^{\oplus B_{\alpha, \beta}^a} = \bigoplus_{(\alpha, \beta) \vdash (2, 3)} (V_{\alpha} \otimes V_{\beta})^{\oplus B_{\alpha, \beta}^a}.\] We have, by Theorem \ref{TheoComposBlocks}, \[Y^a = \bigoplus_{(\alpha, \beta) \vdash (2, 3)} \left((\bb{C} \sqcup_l \ ^{\bb{S}_2}_{\bb{S}_1}\downarrow V_{\alpha}) \boxtimes (\bb{C}[\bb{S}_2] \downarrow^{\bb{S}_2}_{\bb{S}_1} \sqcup_r V_{\beta}\right)^{\oplus B_{\alpha, \beta}}.\] Thus, by Pieri's rules,
        		
\begin{align*}
Y^a &= \bigoplus_{(\alpha, \beta) \vdash (2, 3)} \left((\bb{C} \sqcup_l \bb{C}) \boxtimes (\bb{C} \sqcup_r V_{\beta})\right)^{\oplus 2B_{\alpha, \beta}} \\
&= \bigoplus_{\beta \vdash 3} \left(\bb{C}[\bb{S}_2] \boxtimes (\bb{C} \sqcup_r V_{\beta})\right)^{\oplus 4B_{\alpha, \beta}} \\
&= \bigoplus_{\beta \vdash 3} \bigoplus_{\substack{\gamma \vdash 4 \\ \gamma \geq \beta}} \left(\bb{C}[\bb{S}_2] \boxtimes V_{\gamma} \right)^{\oplus 4B_{\alpha, \beta}}.
\end{align*}
			
Here, in terms of Young diagrams, $\gamma$ is obtained from $\beta$ by adding one box. For every $\gamma \vdash 4$ and $\beta \vdash 3$, we count the number of ways to get $\gamma$ from $\beta$ this way in Table \ref{gammabeta}. We also have, by \texttt{SageMath} computations and Proposition \ref{PropDual}, the values of $B^a_{\alpha, \beta}$ depending on $\beta$ and $a$, see Table \ref{Ba}. Then we get the values of $y^a_{\lambda, \mu}$ depending on $a$ and $\mu$, see Table \ref{yalambdamu}, and the values of $x^a_{\lambda, \mu} + y^a_{\lambda, \mu}$ depending on $a$ and $\mu$, see Table \ref{xplusy}.
			
\begin{table}[h!]
\caption{Number of ways to get $\gamma$ from $\beta$ adding one box}
\label{gammabeta}
\centering
\begin{tabular}{r|c|c|c|c|c}
$\beta \backslash \gamma$ & \ydiagram{4} & \ydiagram{3, 1} & \ydiagram{2, 2} & \ydiagram{2, 1, 1} & \ydiagram{1, 1, 1, 1} \\
\hline
\ydiagram{3} & $1$ & $1$ & $0$ & $0$ & $0$ \\
\hline
\ydiagram{2, 1} & $0$ & $1$ & $1$ & $1$ & $0$ \\
\hline
\ydiagram{1, 1, 1} & $0$ & $0$ & $0$ & $1$ & $1$
\end{tabular}
\end{table}
			
\begin{table}[h!]
\caption{Values of $B^a_{\alpha, \beta}$ depending on $a$ and $\beta$}
\label{Ba}
\centering
\begin{tabular}{r|c|c|c|c|c}
$a\in \backslash \beta$ & \ydiagram{3} & \ydiagram{2, 1} & \ydiagram{1, 1, 1} \\
\hline
$C$ & $1$ & $2$ & $1$ \\
\hline
$NC_{\norm}$ & $0$ & $0$ & $0$ \\
\hline
$NC_1$ & $0$ & $0$ & $1$ \\
\hline
$NC_{x, 0}$ & $0$ & $1$ & $1$
\end{tabular}
\end{table}
        		
\begin{table}[h!]
\caption{$y^a_{\lambda,\mu}$ depending on $a$ and $\mu$}
\label{yalambdamu}
\centering
\begin{tabular}{r|c|c|c|c|c}
$a\in \backslash \mu$ & \ydiagram{4} & \ydiagram{3, 1} & \ydiagram{2, 2} & \ydiagram{2, 1, 1} & \ydiagram{1, 1, 1, 1} \\
\hline
$C$ & $4$ & $12$ & $8$ & $12$ & $4$ \\
\hline
$NC_{\norm}$ & $0$ & $0$ & $0$ & $0$ & $0$ \\
\hline
$NC_1$ & $0$ & $0$ & $0$ & $4$ & $4$ \\
\hline
$NC_{x, 0}$ & $0$ & $4$ & $4$ & $8$ & $4$
\end{tabular}
\end{table}
			
\begin{table}[h!]
\caption{$x^a_{\lambda,\mu} + y^a_{\lambda,\mu}$ depending on $a$ and $\mu$}
\label{xplusy}
\centering
\begin{tabular}{r|c|c|c|c|c}
$a\in \backslash \mu$ & \ydiagram{4} & \ydiagram{3, 1} & \ydiagram{2, 2} & \ydiagram{2, 1, 1} & \ydiagram{1, 1, 1, 1} \\
\hline
$C$ & $34$ & $102$ & $68$ & $102$ & $34$ \\
\hline
$NC_{\norm}$ & $27$ & $81$ & $54$ & $81$ & $27$ \\
\hline
$NC_1$ & $28$ & $83$ & $55$ & $86$ & $31$ \\
\hline
$NC_{x, 0}$ & $29$ & $90$ & $61$ & $93$ & $32$
\end{tabular}
\end{table}

\subsection{External chains}

\subsubsection{Degree 0}
				
Here the method is exactly the same as in Section \ref{SecKoszulness}, we have \[(M\mcal{P}_a)_{\lambda, \mu} = 93M_{\lambda, \mu} - (MR_a)_{\lambda, \mu}.\] We compute the partial Smith form of the representation matrices for $R_a^{(4)}(2, 4)$ and get for every $(\lambda, \mu) \vdash (2, 4)$ a matrix of the form \[C_{\lambda, \mu}(R_a^{(4)}(2, 4)) := \begin{pmatrix} I_{k_{\lambda, \mu}} & * \\ 0 & B^a_{\lambda, \mu} \end{pmatrix}\] such that $(MR_a)_{\lambda, \mu} = k_{\lambda, \mu} + r^a_{\lambda, \mu}$ where $r^a_{\lambda, \mu} := \rk(B^a_{\lambda, \mu})$. Here the issue is that the representation matrices are too big to be studied as simply as in \ref{SecKoszulness}, thus we have to find new strategies. In some cases, the representation matrices are small enough to compute Gröbner bases, as we will see in Section \ref{subsubsecGrob}.
				 			
\subsubsection{Degree 4}
        			
Using Proposition \ref{PropDual}, we do the same here as for degree $0$, but for the properad $\mcal{P}_a^{\text{!}} = \mcal{F}(\Sigma E)/(\Sigma^2 R^{\perp})$, with 
        			
\begin{align*}
R^{\perp} =& \left(\peignegauche{}{}{} - \peignedroite{}{}{}\right) \oplus \left(\copeignegauche{}{}{} - \copeignedroite{}{}{}\right) \oplus \left(a_1\jesus{}{}{}{} - \dromadroite{}{}{}{}\right) \\
& \oplus \left(a_2\jesus{}{}{}{} - \dromagauche{}{}{}{}\right)\oplus \left(a_3\jesus{}{}{}{} - \poissongauche{}{}{}{}\right)\oplus \left(a_4\jesus{}{}{}{} - \poissondroite{}{}{}{}\right).
\end{align*}
        			
Thus we find 
        			
\begin{align*}
(M\mcal{P}_a^{\text{\textexclamdown}})_{\lambda, \mu} &= 93M_{\lambda, \mu} - m_{\lambda', \mu'}((R^{\perp})^{(4)}(2, 4)) \\
&= 93M_{\lambda, \mu} - k^{\perp}_{\lambda', \mu'} - r^{a, \perp}_{\lambda', \mu'},
\end{align*} where the partial Smith form of the representation matrices for $(R^{\perp})^{(4)}(2, 4)$ and $(\lambda, \mu) \vdash (2, 4)$ are of the form 
        			
\[\begin{pmatrix} I_{k^{\perp}_{\lambda, \mu}} & * \\ 0 & B^{a, \perp}_{\lambda, \mu} \end{pmatrix},\] and $r^{a, \perp}_{\lambda, \mu} := \rk(B^{a, \perp}_{\lambda, \mu})$.

\subsubsection{Computing the multiplicities}
\label{subsubsecGrob}

Now we can say that if $\mcal{P}_a$ is Koszul, we have \[(M\mcal{P}_a^{\text{\textexclamdown}})_{\lambda, \mu} + 42M_{\lambda, \mu} + (M\mcal{P}_a)_{\lambda, \mu} = 9M_{\lambda, \mu} + y^a_{\lambda, \mu} + 20M_{\lambda, \mu},\] which is equivalent to \[199M_{\lambda, \mu} = x^a_{\lambda, \mu} + y^a_{\lambda, \mu} + k_{\lambda, \mu} + k^{\perp}_{\lambda', \mu'}+ r^a_{\lambda, \mu} + r^{a, \perp}_{\lambda', \mu'}.\]

First we can remind the values of $M_{\lambda, \mu}$, and display the values of $k_{\lambda, \mu}$ and $k_{\lambda', \mu'}^{\perp}$ given by \texttt{SageMath} computations, and $199M_{\lambda, \mu} - k_{\lambda, \mu} - k^{\perp}_{\lambda', \mu'}$ in Table \ref{SomeValues}.

\begin{table}[h!]
\caption{Values of $M_{\lambda, \mu}$, $k_{\lambda, \mu}$, $k^{\perp}_{\lambda', \mu'}$ and $199M_{\lambda, \mu} - k_{\lambda, \mu} - k^{\perp}_{\lambda', \mu'}$ depending on $\mu$}
\label{SomeValues}
\centering
\begin{tabular}{r|c|c|c|c|c}
$\mu$ & $(4)$ & $(3, 1)$ & $(2, 2)$ & $(2, 1, 1)$ & $(1, 1, 1, 1)$ \\
\hline
$M_{\lambda, \mu}$ & $1$ & $3$ & $2$ & $3$ & $1$ \\
\hline
$k_{\lambda, \mu}$ & $73$ & $219$ & $146$ & $219$ & $73$ \\
\hline
$k^{\perp}_{\lambda', \mu'}$ & $92$ & $276$ & $184$ & $276$ & $92$ \\
\hline
$199M_{\lambda, \mu} - k_{\lambda, \mu} - k^{\perp}_{\lambda', \mu'}$ & $34$ & $102$ & $68$ & $102$ & $34$
\end{tabular}
\end{table}

The only values remaining are $r_{\lambda, \mu}^a$ and $r_{\lambda', \mu'}^{\perp}$, which are ranks of the matrices $B_{\lambda, \mu}^a$ and $B_{\lambda, \mu}^{a, \perp}$.
		
In order to find an upper bound of the rank of a polynomial matrix $M(x_1, \dots, x_l)$ of size $n \times m$, one can compute, for $1 \leq i \leq \min(n, m)$, the determinantal ideal $DI_i(M)$ (see \cite[Section 3]{BrDo}), which is the ideal generated by all minors of $M$ of size $i$, and compute its Gröbner basis. The Gröbner basis is $0$ if and only if for all $(z_1, \dots, z_l) \in \bb{C}^l$, $\rk(M(z_1, \dots, z_l)) < i$. We can compute Gröbner bases for the polynomial matrix $B_{\lambda, \mu}^{a, \perp}$ in the variables $(a_1, a_2, a_3, a_4)$, and find the values of $r^{a, \perp}_{\lambda', \mu'}$ depending on $a$ and $\mu$, see Table \ref{rperpalambdamu}.
			
\begin{table}[h!]
\caption{$r^{a, \perp}_{\lambda', \mu'}$ depending on $a$ and $\mu$}
\label{rperpalambdamu}
\centering
\begin{tabular}{r|c|c|c|c|c}
$a\in \backslash \mu$ & $(4)$ & $(3, 1)$ & $(2, 2)$ & $(2, 1, 1)$ & $(1, 1, 1, 1)$ \\
\hline
$C$ & $0$ & $0$ & $0$ & $0$ & $0$ \\
\hline
$NC_{\norm}$ & $1$ & $3$ & $2$ & $3$ & $1$ \\
\hline
$NC_1$ & $1$ & $3$ & $2$ & $3$ & $0$ \\
\hline
$NC_{x, 0}$ & $1$ & $3$ & $2$ & $2$ & $0$
\end{tabular}
\end{table}
			
Thus, if $\mcal{P}_a$ is Koszul, we have \[r^a_{\lambda, \mu} = 199M_{\lambda, \mu} - k_{\lambda, \mu} - k^{\perp}_{\lambda', \mu'} - x^a_{\lambda, \mu} - y^a_{\lambda, \mu} - r^{a, \perp}_{\lambda', \mu'} =: \Sigma^a_{\lambda, \mu},\] and we have the values of $\Sigma^a_{\lambda, \mu}$ depending on $a$ and $\mu$ in Table \ref{sigmaalambdamu}.
			
\begin{table}[h!]
\caption{$\Sigma^a_{\lambda, \mu}$ depending on $a$ and $\mu$}
\label{sigmaalambdamu}
\centering
\begin{tabular}{r|c|c|c|c|c}
$a\in \backslash \mu$ & $(4)$ & $(3, 1)$ & $(2, 2)$ & $(2, 1, 1)$ & $(1, 1, 1, 1)$ \\
\hline
$C$ & $0$ & $0$ & $0$ & $0$ & $0$ \\
\hline
$NC_{\norm}$ & $6$ & $18$ & $12$ & $18$ & $6$ \\
\hline
$NC_1$ & $5$ & $16$ & $11$ & $13$ & $11$ \\
\hline
$NC_{x, 0}$ & $4$ & $9$ & $5$ & $7$ & $10$
\end{tabular}
\end{table}

For $r_{\lambda, \mu}^a$ however, this is more complicated because of the sizes of the matrices $B^a_{\lambda, \mu}$. Moreover, on couples of partitions giving small enough matrices, Gröbner bases show that if $a \in NC_{\norm}$, the Koszul criterion can be verified (for example $a = (2, 2, 0, 0)$). But there is still many cases we can look at.

\subsection{Conclusion}

By studying the possible values of $r_{\lambda, \mu}^a$ for some partitions $(\lambda, \mu) \vdash (2, 4)$, we can state several interesting theorems. For example for $(\lambda, \mu) = ((1, 1), (1, 1, 1, 1))$, we get \[r_{\lambda, \mu}^a = \begin{cases} 0 &\text{ if } a \in C \\ 3&\text{ if }a \in NC_1 \\ 2&\text{ if } a \in NC_{x, 0}\end{cases}.\] This proves the following.
			
\begin{theo} Let $a = (a_1, a_2, a_3, a_4) \in \{0, 1\}^4$. The following are equivalent :
			
\begin{enumerate}
\item the properad $\mcal{P}_a$ induces a confluent system,
\item the properad $\mcal{P}_a$ is Koszul.
\end{enumerate}
			
Moreover, in this case, $\between_a$ is one of the following up to isomorphism : \[\jesus{}{}{}{}, \jesus{}{}{}{} - \dromadroite{}{}{}{} \text{ or } \jesus{}{}{}{} - \dromadroite{}{}{}{} - \dromagauche{}{}{}{}.\]
\end{theo}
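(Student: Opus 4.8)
I treat the two implications separately and then read off the possible shapes of $\between_a$. The implication $(i)\Rightarrow(ii)$ needs no new work: the theorem at the start of Section~\ref{SecNonKoszulness} states that, for every $a\in\bb{C}^4$, if $\mcal{P}_a$ induces a confluent system then $\varphi_a$ is an isomorphism in weight~$3$ and $\mcal{P}_a$ is Koszul. It therefore remains to prove $(ii)\Rightarrow(i)$ for $a\in\{0,1\}^4$, which I do by contraposition. Since $a_i=a_i^2$ automatically when $a_i\in\{0,1\}$, the set $NC_{\norm}$ does not meet $\{0,1\}^4$; and since $\mcal{P}_a^{\op}\simeq\mcal{P}_{\tilde a}$ — an isomorphism preserving both confluence and Koszulness — the tuples $(1,0,0,1)$ and $(0,1,1,0)$ are the reversed versions of $(0,1,0,1)$ and $(1,0,1,0)$, so it suffices to show that $\mcal{P}_a$ is non-Koszul for every $a\in NC_1\cup NC_{x,0}$.

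Next I reduce the Koszul obstruction to a single rank. Assembling the degree-by-degree descriptions of the chains of $(\mcal{P}_a\boxtimes\mcal{P}_a^{\text{\textexclamdown}})^{(4)}(2,4)$ obtained in the previous subsections, the Euler characteristic of the isotypic component $[(\mcal{P}_a\boxtimes\mcal{P}_a^{\text{\textexclamdown}})^{(4)}(2,4)]_{(\lambda,\mu)}$ is
\[199M_{\lambda,\mu}-x^a_{\lambda,\mu}-y^a_{\lambda,\mu}-k_{\lambda,\mu}-k^{\perp}_{\lambda',\mu'}-r^a_{\lambda,\mu}-r^{a,\perp}_{\lambda',\mu'},\]
so by the chain of implications of Section~\ref{SecNonKoszulness}, Koszulness of $\mcal{P}_a$ forces the numerical identity $r^a_{\lambda,\mu}=\Sigma^a_{\lambda,\mu}$ at every $(\lambda,\mu)\vdash(2,4)$, with $\Sigma^a_{\lambda,\mu}$ as in Table~\ref{sigmaalambdamu}. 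All quantities entering $\Sigma^a_{\lambda,\mu}$ have already been tabulated, so it is enough to compute the rank $r^a_{\lambda,\mu}=\rk(B^a_{\lambda,\mu})$ for one conveniently chosen couple of partitions and compare.

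I take $(\lambda,\mu)=((1,1),(1,1,1,1))$. For this couple the representation matrix of $R_a^{(4)}(2,4)$ given by Proposition~\ref{propCliftonproperad}, built from the \texttt{SageMath} basis of $\mcal{F}^{(4)}(E)(2,4)$ and the generators of $R_a^{(4)}(2,4)$ from \cite{NeWeb}, is small enough that its partial Smith form and the rank of the residual block $B^a_{(1,1),(1,1,1,1)}$ can be computed; as $a$ runs over the finite set $\{0,1\}^4$ one may evaluate directly at the sixteen tuples (a determinantal-ideal computation over $\bb{C}[a_1,a_2,a_3,a_4]$ as in \cite{BrDo} yields the same, uniform, conclusion). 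One finds
\[r^a_{(1,1),(1,1,1,1)}=\begin{cases}0,& a\in C,\\3,& a\in NC_1,\\2,& a\in NC_{x,0},\end{cases}\]
whereas Table~\ref{sigmaalambdamu} gives $\Sigma^a_{(1,1),(1,1,1,1)}=0,\,11,\,10$ in the same three cases. Hence $r^a_{(1,1),(1,1,1,1)}\neq\Sigma^a_{(1,1),(1,1,1,1)}$ whenever $a\in NC_1\cup NC_{x,0}$, so the criterion fails and $\mcal{P}_a$ is not Koszul there; together with $(i)\Rightarrow(ii)$ and the reduction above this proves the equivalence on $\{0,1\}^4$. For the ``moreover'' part, by the confluence criterion of Subsection~\ref{SubsectionConfluence} and the remark following it the $a\in\{0,1\}^4$ inducing a confluent system are $(0,0,0,0)$, the four tuples with a single nonzero entry, and $(1,1,0,0)$, $(0,0,1,1)$; by the discussion after Conjecture~\ref{NewConj} these group into three isomorphism classes whose terms $\between_a$ are, up to isomorphism, $\jesus{}{}{}{}$, $\jesus{}{}{}{}-\dromadroite{}{}{}{}$, and $\jesus{}{}{}{}-\dromadroite{}{}{}{}-\dromagauche{}{}{}{}$.

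The main obstacle is producing the matrix $B^a_{(1,1),(1,1,1,1)}$ correctly — which depends on the \texttt{SageMath} generation of a basis of $\mcal{F}^{(4)}(E)(2,4)$ and of the ideal $R_a^{(4)}(2,4)$ in this biarity and weight — and on recognising that this single isotypic component already obstructs Koszulness; for general $a\in\bb{C}^4$ it does not, which is precisely why the theorem is restricted to $\{0,1\}^4$. Everything else is bookkeeping with the tables already established.
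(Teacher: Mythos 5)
Your proof is correct and follows essentially the same route as the paper: the paper also isolates the single couple $(\lambda,\mu)=((1,1),(1,1,1,1))$, computes $r^a_{(1,1),(1,1,1,1)}=0,3,2$ on $C$, $NC_1$, $NC_{x,0}$, and compares with $\Sigma^a_{(1,1),(1,1,1,1)}=0,11,10$ to conclude non-Koszulness off the confluent locus. Your explicit treatment of the tuples $(1,0,0,1)$ and $(0,1,1,0)$ via the isomorphism $\mcal{P}_a^{\op}\simeq\mcal{P}_{\tilde a}$ is a welcome extra precision that the paper leaves implicit.
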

			
Nevertheless, the subfamily of properads considered in this theorem is finite of size $16$. Up to isomorphism, this family is of size $7$, thus this theorem is based on Koszulness of three different properads up to isomorphism, and proves non Koszulness of four different properads up to isomorphisms, the ones given by $\between_a$ equal to 
\begin{align*}
&\jesus{}{}{}{} - \dromadroite{}{}{}{} - \poissongauche{}{}{}{}, \jesus{}{}{}{} - \dromadroite{}{}{}{} - \poissondroite{}{}{}{}, \\
&\jesus{}{}{}{} - \dromadroite{}{}{}{} - \dromagauche{}{}{}{} - \poissongauche{}{}{}{}\text{ or } \jesus{}{}{}{} - \dromadroite{}{}{}{} - \dromagauche{}{}{}{} - \poissongauche{}{}{}{} - \poissondroite{}{}{}{}.
\end{align*}
			
We now use Gröbner bases via \texttt{SageMath} on the matrices $B_{\lambda, \mu}^a$ for $\lambda = (1, 1)$ and $\mu = (1, 1, 1, 1)$, but for subfamilies of $\bb{C}^4$ in order to partially prove the conjecture. For example, if we take $a_2 = a_4 = 0$, we have the following.
			
\begin{theo} \label{TheoFirst}
For $a = (a_1, 0, a_3, 0)$, the following are equivalent :
\begin{enumerate}
\item the properad $\mcal{P}_a$ is Koszul,
\item the relation $\between_a$ is one of the following, up to isomorphism : \[\jesus{}{}{}{}\text{ or }\jesus{}{}{}{} - \dromadroite{}{}{}{}.\]
\end{enumerate}
\end{theo}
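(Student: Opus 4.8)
The plan is to run the Euler-characteristic criterion of Section~\ref{SecNonKoszulness} on the single isotypic component indexed by $(\lambda,\mu)=((1,1),(1,1,1,1))$, using that on the plane $\{a=(a_1,0,a_3,0)\}$ every auxiliary multiplicity occurring in the criterion is already tabulated, so that only the polynomial rank $r^a_{(1,1),(1,1,1,1)}$ remains to be determined. For the implication $(ii)\Rightarrow(i)$ I would first observe that the tuples $a=(a_1,0,a_3,0)$ for which $\between_a$ is $\jesus{}{}{}{}$ or $\jesus{}{}{}{}-\dromadroite{}{}{}{}$ up to isomorphism are exactly $a=(0,0,0,0)$, $a=(1,0,0,0)$ and $a=(0,0,1,0)$, the last two being isomorphic by the remark following Conjecture~\ref{NewConj}. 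All three lie in the set $C$, hence induce a confluent system, and by the theorem at the start of Section~\ref{SecNonKoszulness} (which for these tuples follows from the identification $\mcal{P}_{(0,0,0,0)}\simeq\frac{1}{2}\mcal{B}$, respectively from Theorem~\ref{TheoPartial} combined with Theorem~\ref{TheoReplacement}), such properads are Koszul.

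For the implication $(i)\Rightarrow(ii)$ I would argue contrapositively: take $a=(a_1,0,a_3,0)$ with $(a_1,a_3)\notin\{(0,0),(1,0),(0,1)\}$ and prove that $\mcal{P}_a$ is not Koszul. If it were Koszul, then by Section~\ref{SecNonKoszulness} the Euler characteristic of $[(\mcal{P}_a\boxtimes\mcal{P}_a^{\text{\textexclamdown}})^{(4)}(2,4)]_{(1,1),(1,1,1,1)}$ would vanish, i.e.\ $r^a_{(1,1),(1,1,1,1)}=\Sigma^a_{(1,1),(1,1,1,1)}$. On our plane only three strata occur: $a\in C$ at the three excluded points; $a\in NC_{x,0}$ solely at $(a_1,a_3)=(1,1)$, that is $a=(1,0,1,0)$; and $a\in NC_{\norm}$ (equivalently $a_1\notin\{0,1\}$ or $a_3\notin\{0,1\}$) everywhere else. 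From Table~\ref{sigmaalambdamu}, $\Sigma^a_{(1,1),(1,1,1,1)}$ equals $0$, $10$, $6$ respectively. The point $a=(1,0,1,0)$ is already covered, since $r^a_{(1,1),(1,1,1,1)}=2\neq 10$, so it remains to establish $r^a_{(1,1),(1,1,1,1)}\neq 6$ for every $a\in NC_{\norm}$ lying on the plane.

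To settle this last point I would take the representation matrix of $R_a^{(4)}(2,4)$ for $((1,1),(1,1,1,1))$ generated by the \texttt{SageMath} script of \cite{NeWeb}, bring it to partial Smith form to extract $B^a_{(1,1),(1,1,1,1)}$, specialize $a_2=a_4=0$ so that it becomes a matrix over $\bb{C}[a_1,a_3]$, and compute Gröbner bases of its determinantal ideals $DI_i$ to read off $r^a_{(1,1),(1,1,1,1)}$ as a function of $(a_1,a_3)$; the computation in the accompanying notebook shows that this rank never equals $6$ on the locus where $a_1\notin\{0,1\}$ or $a_3\notin\{0,1\}$. Hence the criterion fails on the entire complement of $\{(0,0),(1,0),(0,1)\}$, so $\mcal{P}_a$ is not Koszul there, which is the contrapositive of $(i)\Rightarrow(ii)$. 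The main obstacle is precisely this symbolic computation: producing $B^a_{(1,1),(1,1,1,1)}|_{a_2=a_4=0}$ explicitly and certifying, via Gröbner bases of determinantal ideals over $\bb{C}[a_1,a_3]$, that its rank stays away from $\Sigma^a_{(1,1),(1,1,1,1)}=6$ throughout the relevant locus — the danger being a hidden curve along which the rank drops to $6$, a phenomenon that does happen for $a=(2,2,0,0)\in NC_{\norm}$ off our plane, where this single isotypic component no longer detects non-Koszulness.
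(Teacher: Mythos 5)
Your proposal is correct and follows essentially the same route as the paper: the forward direction via confluence and Theorem \ref{TheoReplacement}, and the converse by restricting the Euler-characteristic criterion to the single isotypic component $((1,1),(1,1,1,1))$, specializing $B^a_{(1,1),(1,1,1,1)}$ to the plane $a_2=a_4=0$, and certifying via Gr\"obner bases of determinantal ideals that $r^a_{(1,1),(1,1,1,1)}$ avoids $\Sigma^a_{(1,1),(1,1,1,1)}$ away from the three confluent points. You also correctly flag the one genuine caveat (the rank can coincide with $\Sigma^a$ on $NC_{\norm}$ off this plane, e.g.\ at $(2,2,0,0)$), which is exactly why the paper restricts to the subfamily before running the computation.
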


We have similar results for many other subfamilies.

\begin{theo} \label{TheoGen} Conjecture \ref{Conjecture} is true for the subfamilies given by $(a_1, 0, a_3, 0)$, $(0, a_2, 0, a_4)$, $(a_1, 1, 0, 0)$, $(1, a_2, 0, 0)$, $(1, 1, a_3, 0)$ and $(1, 1, 0, a_4)$. \end{theo}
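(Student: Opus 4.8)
The plan is to establish, for each of the subfamilies listed, the one implication of Conjecture \ref{Conjecture} that is not already available in general. Indeed, (i) $\Leftrightarrow$ (ii) is Theorem \ref{TheoPartial} and holds for all $a \in \bb{C}^4$, while (ii) $\Rightarrow$ (iii) is Theorem \ref{TheoReplacement}; so on a given subfamily it remains only to prove (iii) $\Rightarrow$ (i), that is, that every $a$ in the subfamily not inducing a confluent system gives a non-Koszul properad $\mcal{P}_a$. The subfamily $(a_1, 0, a_3, 0)$ is precisely Theorem \ref{TheoFirst}: combined with Theorem \ref{TheoPartial} and the list of isomorphisms recorded after Conjecture \ref{NewConj}, that statement says exactly that (iii) holds if and only if $a$ is, up to isomorphism, one of the confluent tuples of the subfamily. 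Moreover, since $\mcal{P}_a$ is Koszul if and only if $\mcal{P}_a^{\op} \simeq \mcal{P}_{\tilde a}$ is, with $\tilde a = (a_2, a_1, a_3, a_4)$, the subfamily $(1, a_2, 0, 0)$ is equivalent to $(a_1, 1, 0, 0)$; so only the four subfamilies $(0, a_2, 0, a_4)$, $(a_1, 1, 0, 0)$, $(1, 1, a_3, 0)$ and $(1, 1, 0, a_4)$ genuinely require new arguments, and for each of them the confluent values of $a$ (classified by the Proposition of Section \ref{SubsectionConfluence}) are all isomorphic to $(0,0,0,0)$, $(1,0,0,0)$ or $(1,1,0,0)$, which are known to be Koszul ($\frac{1}{2}\mcal{B}$; the replacement-rule argument via Theorems \ref{TheoPartial} and \ref{TheoReplacement}; $\varepsilon\mcal{B}$). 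Hence everything comes down to showing non-Koszulness of $\mcal{P}_a$ for $a$ on the non-confluent loci of these four subfamilies.

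For this I would use the criterion of Section \ref{SecNonKoszulness}: it is enough to find a pair $(\lambda, \mu) \vdash (2, 4)$ for which the Euler characteristic of $[(\mcal{P}_a \boxtimes \mcal{P}_a^{\text{\textexclamdown}})^{(4)}(2, 4)]_{(\lambda, \mu)}$ is nonzero. Gathering the computations of that section, Koszulness of $\mcal{P}_a$ forces
\[r^a_{\lambda, \mu} = 199 M_{\lambda, \mu} - k_{\lambda, \mu} - k^{\perp}_{\lambda', \mu'} - x^a_{\lambda, \mu} - y^a_{\lambda, \mu} - r^{a, \perp}_{\lambda', \mu'} = \Sigma^a_{\lambda, \mu},\]
and every quantity on the right has already been tabulated, so the only thing left is to determine $r^a_{\lambda, \mu} = \rk(B^a_{\lambda, \mu})$ as a function of the remaining parameters. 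For each of the four subfamilies I would substitute the corresponding Ansatz into the polynomial matrix $B^a_{\lambda, \mu}$ for a well-chosen $(\lambda, \mu)$ — beginning, as in Theorem \ref{TheoFirst}, with $((1,1),(1,1,1,1))$ — and then compute $\rk(B^a_{\lambda, \mu})$ on each specialization by taking Gröbner bases of the successive determinantal ideals $DI_i(B^a_{\lambda, \mu})$ in the surviving variables. Reading off the resulting ranks, one verifies that on the non-confluent locus $\rk(B^a_{\lambda, \mu}) \neq \Sigma^a_{\lambda, \mu}$, which produces a nonzero Euler characteristic and hence shows $\mcal{P}_a$ is not Koszul; together with the first paragraph this yields (i) $\Leftrightarrow$ (ii) $\Leftrightarrow$ (iii) for each subfamily.

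The main obstacle is computational rather than conceptual. In the full four-parameter situation the determinantal ideals of the biarity-$(2,4)$ representation matrices are far too large for a direct Gröbner basis computation — this is exactly why the general conjecture is left open — and it is the restriction to one or two parameters that makes them tractable. Even so, one must choose $(\lambda, \mu)$ adapted to each subfamily: $((1,1),(1,1,1,1))$ may suffice for some, but a different partition pair, or a different component of the weight-$4$ biarity-$(2,4)$ Koszul complex, may be needed for others, and carrying out these rank computations case by case — with the \texttt{SageMath} scripts of \cite{NeWeb} — is where the bulk of the work lies.
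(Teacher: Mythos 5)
Your proposal is correct and follows essentially the same route as the paper: reduce to showing (iii) $\Rightarrow$ (i) on the non-confluent locus of each subfamily, then apply the Euler-characteristic criterion in weight $4$ and biarity $(2,4)$ via the identity $r^a_{\lambda,\mu} = \Sigma^a_{\lambda,\mu}$, computing $\rk(B^a_{\lambda,\mu})$ for $(\lambda,\mu) = ((1,1),(1,1,1,1))$ by Gröbner bases of determinantal ideals after specializing to each subfamily. The paper likewise leaves these case-by-case rank computations to the accompanying \texttt{SageMath} notebooks, so your plan matches both the method and the level of detail of the original argument.
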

			
These results all prove non Koszulness of some properads of the form $\mcal{P}_a$ that induces a non confluent system, but not all of them. In order to prove Conjecture \ref{Conjecture}, one can look in different weights and biarities. For example, one can look in biarity $(3, 3)$ and weight $4$, the spaces considered are bigger, thus the calculations by hand and by computer take more time, but we may find another stronger criterion on $\mcal{P}_a$ to be Koszul. Another idea would be to study the other partitions $(\lambda, \mu) \vdash (2, 4)$ to find potential obstructions. This does not require many more calculations but the problem is that we have to compute minors of big size in matrices of bigger size. We could also go step by step, considering for example $a_1 \neq 0$ (because the problem holds in $NC_{\norm}$), thus we can divide by $a_1$ and simplify the matrix.
			
However these tools can be used to study families of parametrized properads in order to look at possibly Koszul properads, or to prove their non-Koszulness. In the case of non-quadratic properads, one can use these tools to prove (non) homotopy Koszulness of such properads by looking at their associated quadratic properads. However, the condition of $\bb{S}$-bimodule isomorphism between the properads and their associated quadratic properad seems not easy to refute.

\section{\texttt{Sagemath} Script}
\label{SecSagemath}

Here we present the idea behind the \texttt{SageMath} script used in Section \ref{SecKoszulness} and Section \ref{SecNonKoszulness}. The script can be found in \cite{NeWeb}, together with the files \texttt{isobiarity23Method2.ipynb} and \texttt{Koszulcpxbiarity24.ipynb}, the file \texttt{readme.md} that describes every module, and the file \texttt{tutorial.ipynb} that shows how to use the script on a simple example. In this appendix, we will focus on how the script works rather than describing the modules or showing how to actually use it. The script is meant to evolve, maybe even by changing the language.

\subsection{Construction of free properads and ideals}

To construct a free properad over generators, we give a list of graphs which are generators in weight $1$. In order to construct the graphs in weight $n$ from weight $n-1$, we take every element in weight $n-1$ and look at every possible way we can link a generator to this element. This constructs a list of graphs, but possibly with duplicates. That is why we need to remove all duplicate graphs from this list and this is what take the most time in this construction. The time taken by this step is why we save all generated properads, and later ideals, in files via the \texttt{pickle} module.

The idea for the free ideal is the same, we give a list of relations, a relation being a list of couples of the form (graph, coefficient), and a free properad in which this ideal lives. Then we generate, step by step, the space of relations with the same method as for free properads.

In order to study the family of properads in this paper, we generated a free properad over a product and a coproduct. In this properad, we generated an ideal with relations being associativity, coassociativity, and $\between_a$, $a$ being encoded as polynomial variables. Thus we can compute a generating family of the space of relations in some weight and biarity and study it. The issue here is that the generating family in weight $4$, the one that interests us, is way too big and we cannot determine the rank depending on $a$ that easily. Thus we need to use the divide and conquer method from M. Bremner and V. Dotsenko in \cite{BrDo}.

\subsection{The divide and conquer method}

Here the idea was to encode the divide and conquer method, we use exactly the same method as in \cite{BrDo}, but we just add the Kronecker product to the process because we work on properads instead of operads. The other difficulty was to get only one relation per orbit under the $\bb{S}_m \times \bb{S}_n^{\op}$ action, because the external outputs and inputs of similar graphs were in the same order. That is why we added to the function that removes duplicates a standardizing function, that uniformizes all graphs.

Once one relation per orbit chosen, we can compute the representation matrices of the space of relations in some weight and biarity and eventually compute the multiplicities of this representation in terms of $a$. In order to compute these multiplicities, one can use Gröbner bases or primary decompositions of the determinental ideals.

\subsection{Limits}

This script can be used to compute free properads or properads by generators and relations weight by weight (if the space of relations is homogeneous). For a specific properad, one can easily compute any desired dimension for a weight lower or equal to $4$. This may take a few hours for weight $4$. In the futur, we propose to study the properad $BiB^{\lambda}$ encoding balanced infinitesimal bialgebras, see \cite[Section 2]{Qu}, or the properad $V$ encoding $V$-gebras, see \cite[Section 3]{LeVa}.

However, there are some limits to this script. So far, handling symmetries on generators is not easy, for example for the properad $V$, one has to compute the free properad over a non-cocommutative bi-tensor, then compute by hand the consequences of cocommutativity of the bi-tensor and write down these additionnal relations, together with the other ones, in the space of relations. This increases the amount of calculations done by the script, thus slows it down a lot. Another limit of this script is that it is very long to compute weight $5$ of a free properad. Maybe it can be optimized.

\printbibliography
        
\end{document}